\numberwithin{equation}{section}
 \newcommand{\R}{\mathbb{R}}
 \newcommand{\N}{\mathbb{N}}
 \newcommand{\M}{\mathbb{M}}
 \newcommand{\Z}{\mathbb{Z}}
 \newcommand{\E}{\mathbb{E}}
 \renewcommand{\P}{\mathbb{P}}
\newcommand{\1}{\mathbbm{1}}
\newcommand{\dr}{\underline{\underline{r}}}
\newcommand{\uuxi}{\underline{\underline{\xi}}}
\newcommand{\uur}{\underline{\underline{r}}}
\newcommand{\uxi}{\underline{\xi}}
\newcommand{\uF}{\underline{F}}
\def\CB{\mathcal{B}}
\def\CE{\mathcal{E}}
\def\CF{\mathcal{F}}
\def\CM{\mathcal{M}}
\def\CL{\mathcal{L}}
\def\D{\mathbb{D}}
\def\E{\mathbb{E}}
\def\N{\mathbb{N}}
\def\P{\mathbb{P}}
\def\R{\mathbb{R}}
\def\U{\mathbb{U}}
\def\Z{\mathbb{Z}}
\DeclareMathSymbol{\varNu}{\mathord}{letters}{78}
\newcommand{\wh}{\widehat}
\newcommand{\wt}{\widetilde}
\newcommand{\be}{\begin{equation}}
\newcommand{\ee}{\end{equation}}
\newcommand{\bea}{\begin{eqnarray}}
\newcommand{\eea}{\end{eqnarray}}
\newcommand{\bean}{\begin{eqnarray*}}
\newcommand{\eean}{\end{eqnarray*}}
\newcommand{\noi}{\noindent}
\newcommand{\intl}{\int\limits}
\newcommand{\suml}{\sum\limits}
\newtheorem{theorem}{Theorem}[section]
\newtheorem{proposition}[theorem]{Proposition}
\newtheorem{corollary}[theorem]{Corollary}
\newtheorem{lemma}[theorem]{Lemma}
\newtheorem{definition}[theorem]{Definition}
\newtheorem{assumption}[theorem]{Assumption}
\newtheorem{xx}{\bf xxx}
\newtheorem{yy}{\bf yyy}
\newtheorem{zz}{\bf zzz}
\def\th@newremark{\th@remark\thm@headfont{\bfseries}}   
\theoremstyle{newremark}
\newtheorem{remark}[theorem]{Remark}
\newtheorem{example}[theorem]{Example}
\renewcommand{\epsilon}{\varepsilon}
\newcommand{\dx}{\mathrm{d}}                                   
\newcommand{\independent}{\perp\!\!\!\perp}                    
\newcommand{\eqd}{\overset{\textup{d}}{=}}                     
\newcommand{\mcB}{\mathcal{B}}
\newcommand{\mcD}{\mathcal{D}}
\newcommand{\mcF}{\mathcal{F}}
\newcommand{\mcG}{\mathcal{G}}
\newcommand{\mcH}{\mathcal{H}}
\newcommand{\mcM}{\mathcal{M}}
\newcommand{\mcP}{\mathcal{P}}
\newcommand{\mfC}{\mathfrak{C}}
\newcommand{\mfk}{\mathfrak{k}}
\newcommand{\mfK}{\mathfrak{K}}
\newcommand{\mfR}{\mathfrak{R}}
\newcommand{\mfu}{\mathfrak{u}}
\newcommand{\mfU}{\mathfrak{U}}
\newcommand{\mfv}{\mathfrak{v}}
\newcommand{\mfw}{\mathfrak{w}}
\newcommand{\bbD}{\mathbb{D}}
\newcommand{\bbE}{\mathbb{E}}
\newcommand{\bbK}{\mathbb{K}}
\newcommand{\bbN}{\mathbb{N}}
\newcommand{\bbR}{\mathbb{R}}
\newcommand{\bbS}{\mathbb{S}}
\newcommand{\bbU}{\mathbb{U}}
\newcommand{\bbZ}{\mathbb{Z}}
\newcommand{\mbfP}{\mathbf{P}}
\newcommand{\Levy}{L{\'e}vy }
\newcommand{\cadlag}{c\`adl\`ag{ }}
\newcommand{\Ito}{It\^o}
\renewcommand{\S}{S}
\definecolor{darkgreen}{rgb}{0.0,0.6,0.1}  
\begin{document}

\title{Branching Processes --- A General Concept}
\author{Andreas Greven$^1$, Thomas Rippl$^2$, Patric Karl Gl\"{o}de$^3$}

\date{{\today}\\
\begin{center}
\end{center}}

\maketitle

\begin{abstract}
The paper has four goals.
First, we want to generalize the classical concept of the branching property so that it becomes true for historical and genealogical processes, where the classical concept fails (here we use the description of genealogies by ($V$-marked) ultrametric measure spaces leading to state spaces $\U$ resp. $\U^V$).
In particular we want to complement the corresponding concept of infinite divisibility developed in \cite{infdiv} for this context.
The processes we consider are always defined by well-posed martingale problems.
The point of the generalized branching property is that the state at times $t+s$ can at any time $t$ be decomposed in a measurable function of the state at time $t$ and an independent part which itself then decomposes in independent copies of the process evolving for time $s$.
Secondly we want to find a corresponding characterization of the generators of branching processes both easy to apply and general enough to cover a wide range of mechanisms and state spaces, this is our first main result.

As a third goal we want to obtain the branching property for some important examples  as the $\U$-valued Feller diffusion respectively $\U^V$-valued super random walk and the historical process on countable geographic spaces, the latter as two examples of a whole zoo of spatial processes we could treat.
The fourth goal is to show the robustness of the method and to get the generalized  branching property for genealogies marked with ancestral path, giving the line of descent moving through the ancestors and space, leading to path-marked ultrametric measure spaces.
These new processes are constructed here and proved to have the generalized branching property, both together our second major result.

We develop an abstract framework covering above situations and questions, leading to a new generator criterion.
The state spaces suitable for historical and genealogical processes are consistent collections of topological  semigroups each enriched with a compatible collection of maps, the truncation maps.
All objects are defined  on the state space of the process.
The method allows to treat every type of population model formulated as solution to a well-posed martingale problem. 
This framework in particular includes processes taking values in the space of marked ultrametric measure spaces and hence allows to treat historical information and genealogies of spatial population models both at once, if genealogies are described this way.
Another example is a multitype population, more specific with genetic types under mutation.
\bigskip

\paragraph{Keywords:} 
Branching property, generator criterion, genealogical processes, historical processes, ancestral path marked genealogies, processes with values in marked ultrametric measure spaces, genealogical super random walk, ancestral path, processes with values in semigroups.
\bigskip

\noi {\bf AMS Subject Classification: 60J80}

\end{abstract}

\medskip

\footnoterule
\noi
\hspace*{0.3cm} {\footnotesize \mbox{}\\
$^{1)}$ Department Mathematik, Universit\"at
Erlangen-N\"urnberg, Cauerstr. 11,
D-91058 Erlangen, Germany, e-Mail: greven@math.fau.de\\
$^{2)}$ Inst.~f.~Math.~Stochastik, Universit\"{a}t G\"ottingen, Goldschmidtstra\ss{}e 7, 37077 G\"ottingen, Germany, trippl@uni-goettingen.de \\
$^{3)}$ Technion-Israel Institute of Technology, Faculty of Industrial Engineering and Management, 32000 Haifa, Israel, gloede@technion.ac.il }

\newpage

\tableofcontents

\section{Introduction}

\paragraph{The problem and goals}
Branching processes are one of the most important and best-studied class of stochastic processes covered in many books, starting with Harris \cite{Harris2002} and Athreya and Ney \cite{AN11} and later including \textit{spatial} models \cite{D93} and \cite{Eth00}.
However new questions and aspects keep arising, 
concerning in particular the evolution of genealogies and histories in such branching models, as we shall see below in detail.
The key point of this work is to define and get criteria for the \emph{"generalized branching property"} in the face of new types of processes describing \emph{genealogies or histories in spatial population models}, where the classical branching property does \textit{not} hold.
Some of these processes are introduced here, others are well known in the literature. 
The reader not familiar with the terms genealogy or history is invited to read Remark~\ref{r.519}.

The focus in the theory of branching processes has now shifted to \emph{historical} and \emph{genealogical information} on the population as objects of interest in its own right.
It is also a tool to understand the behaviour of population sizes or  type frequencies in spatial models, see for example \cite{Neveu86},\cite{Ald1991a}, \cite{NP89}, \cite{LG89}, \cite{AldlGall}, \cite{LG99}, \cite{DLG02}, or in another perspective \cite{BLG00,BLG03,BLG05} to name a few  examples for the use of  genealogies modeled as \emph{labeled trees}.
Here also \emph{multitype} models fit in as marked labeled trees, where individuals have genetic types which undergo mutation.

We focus here on the description of genealogical information more in the spirit of the description via historical processes \cite{DP91} or $\R$-trees \cite{EPW06}, by using here equivalence classes of \emph{(marked) ultrametric measure spaces}.
The latter approach is developed in \cite{GPW09}, \cite{DGP11}, applied to Fleming-Viot type models in \cite{DGP12} and \cite{GPWmp13} and it has been extended to branching in \cite{Gl12,ggr_tvF14,infdiv}.
This particular description of genealogies is chosen in this paper to be able to work with the framework of \emph{well-posed martingale problems} to characterize the models and second to be able to use the concept of infinite divisibility from~\cite{infdiv}.

For this approach using (equivalence classes of) ultrametric measure spaces there is a self-contained and detailed survey, see \cite{DG18evolution}, where in particular one can find examples and references showing more generally the \emph{usefulness of this abstract approach} to \emph{evolving} (in time) \emph{genealogies} of populations, which is based on the construction of the genealogy valued processes via \textit{well-posed martingale problems}.
The latter allows to use the tools which have been developed in \cite{EK86},\cite{D93} in particular for processes relevant in the theory of spatial population models for example see \cite{DG14}, but where genealogical questions arise, which can in the framework of $\U$-valued processes now be tackled.

If we record \emph{genealogies} and \emph{histories} for the time-$t$-population there are two forms in which information about the past may sit in the \emph{present state}, for once in the ancestral relationships of individuals (genealogies) or in the marks as locations or types (histories).
Examples are paths in geographic space or paths in type space, the latter storing information on the genome. 
Both effects may require some care in order to still distill a generalized branching property, since the classical branching property does not hold in that situation. Our examples we treat show the typical obstacles to the classical form of independence in the further evolution of subpopulations with states storing information about the past.

The main goals of this paper are:
\begin{itemize}
\item first to develop for a \emph{generalized} branching property a suitable \emph{abstract framework} and formulation, 
\item second to find and prove an \emph{operator criterion} for a process specified by a well-posed martingale problem to check this property, 
\item thirdly to apply it to \emph{historical processes} and \emph{evolving genealogies} the latter described in a specific way namely as \emph{equivalence classes of ultrametric measure spaces}, respectively their \emph{marked} versions for \emph{spatial} and for \emph{multitype} population models and 
\item fourth but not least, construct \emph{ancestral path marked genealogies}, generalizing and combining both \emph{historical} processes as well as \textit{genealogical} processes and  which we introduce here as new processes and where we use the criterion to verify the generalized branching property.
\end{itemize}
\begin{remark}[The word genealogy]\label{r.519}
As the word genealogy or history gives occasionally rise to misunderstanding we clarify here some points concerning the precise meaning of these words \textit{genealogy - history}.

To clarify terminology consider a binary splitting Galton-Watson process in continuous time.
We can draw the \emph{labeled tree} of all individuals ever alive up to time $t$ (labeled means individuals are identified and distinguished by a name).
Then we observe how this object evolves further in time $t$.
The vertices of the tree are the individuals at their birth time and edges representing the life time are attached, ending or giving rise to two descendants splitting the edge at a new vertex.
This allows to define ancestors and descendants and we define the distance between two points in this tree as the sum of the two cumulative edge lengths back to the most recent common ancestor.
This induces a tree-like labeled metric space representing the labeled genealogy.
If we pass to the equivalence class under isometries we obtain an equivalence class of tree-like metric spaces (we forget the names of individuals).

Embedded in the labeled tree is the set of leaves at height $t$ forming a \emph{labeled ultrametric space}, describing the \emph{labeled genealogy} of the \emph{individuals alive at time $t$} referred to as \emph{current population}.
The ultrametric space is always in one-to-one correspondence with a \emph{unique minimal $\R$-tree} of which it is the set of \textit{leaves} (compare Example~\ref{ex.umspaces}).
Again we might pass to the equivalence class under isometries and forget the names of individuals getting what we refer to as genealogy of the current population.
The genealogy including the \emph{fossils} is then the whole genealogical tree including everybody alive at some times before time $t$. The individuals of our population might have types or locations in some space, where they move according to some stochastic process.
Then we get \textit{marked} genealogies.

In such a genealogy we can recover historical information via the ancestral path, obtained by tracing backward type or location of an individual alive at time $t$ till its birth time, continue tracing the type/location of the father etc.
This way we get a collection of backward path in type or geographic space, the \textit{ancestral path}.
Give each path weight $1$ and obtain a measure on paths, this is the state of the \emph{historical process}  a measure-valued process on the space of paths.

We are interested in the time \emph{evolution} of \emph{genealogies/histories} as the current time $t$ evolves in particular in the genealogy of the population currently alive represented by the equivalence class of ultrametric spaces.

Since we are interested in \emph{large populations} even the infinite population limit, such a genealogy can only be observed by taking \textit{samples} from the current population.
Therefore we add a probability measure on the individuals alive at time $t$ to the genealogy.
This results in a labeled genealogy, a triple $(X,r,\mu)$ with $X$ set of individuals, $r$ the genealogical distance (an ultrametric) and $\mu$ the sampling measure.
The equivalence class under isometries which are measure preserving denoted $[X,r,\mu]$ represents for us the relevant information on the genealogy of the population alive at some time $t$.
The genealogy of all individuals alive ever we call the \textit{fossil process}. This is however not the object of this paper.
To incorporate population sizes fluctuating we consider  also $\mu$ which are finite measures.
We return to this point.
\end{remark}

\paragraph*{Branching property}
Important examples for the typical structure of branching processes are \emph{continuous state branching processes} \cite{LG99}, \cite{Lamb07} and \emph{measure valued branching processes} \cite{D93}, \cite{LJ91} or the \textit{historical Dawson-Watanabe} process~\cite{DP91}. 
The state spaces then are {\em linear} spaces, here $\bbR^d$ and $\mcM_f(E)$, respectively.
Here $E$ is a Polish space \emph{and} $\mcM_f(E)$ is the space of finite measures on $(E,\mcB(E))$. 
All branching processes share the following characteristic {\em branching property}: given the state of the present population, the states of sub-populations descending from different "ancestors" in today's population evolve independently from each other and with identical laws.
This has been formalized so far as follows.

The underlying state 
space $\S$ is a {\em semigroup} meaning it is endowed with the binary operation of addition (of reals or measures, respectively).
Suppose that   $X=(X_t)_{t\geq0}$ is a Markov process with values in $\S$.

The transition probabilities denoted
by $P_t$, that is, $(P_t f)(x) = P_t(x,\cdot)[f]=\bbE[f(X_t)|X_0=x]$ for $f \in \textrm{b}\mcB(S)$, the space of bounded and measurable functions on $\S$.
The semigroup $(P_t)_{t\geq0}$ has the \emph{branching property} if
\begin{equation}\label{e1301141845}
 P_t(x+y,\cdot)[f]= (P_t(x,\cdot)\ast P_t(y,\cdot))[f]\,, \text{ for all } x,y \in \S\text{ and } f\in \textrm{b}\mcB(S),
\end{equation}
with $\ast$ the convolution.
Then $X$ is defined to have the {\em branching property} if $(P_t)_{t\geq0}$ has the branching property.

If $\textrm{b}\mcB(S)$ contains a subset $D$ of functions which are \emph{multiplicative} w.r.t.~the semigroup operation $+$ and \emph{$D$ separates} laws on $S$, 
then \eqref{e1301141845} holds if $P_t(x+y,\cdot)[f] = P_t(x,\cdot)[f] \, P_t(y,\cdot)[f]$ for all $x,y \in \S$ and $f \in D$.
Indeed, in some of the {\em classical cases} the branching property is related to the form of the {\em Laplace transform}.
In the second example above we have $X_t \in \S = \mcM_f(E)$.
With the help of a {\em time evolution operator} 
  $V_{s,t}:\textrm{bp}\mcB(E)\to \textrm{bp}\mcB(E)$, $0\leq s\leq t$, $\textrm{bp}\mcB(E)$ the 
  non-negative, bounded, measurable functions on $E$, 
  the Laplace transform can be represented as (set $\langle \mu,f\rangle=\int f \,\dx\mu$ for every 
  $\mu\in\mcM_f(E)$ and $f\in \textrm{b}\mcB(E)$):
  \begin{equation}\label{e1301}
  \bbE[\exp(-\langle X_t,\phi\rangle)|X_s]=\exp( - \langle X_s,V_{s,t}\phi\rangle)\,, \quad\text{a.s.}
  \end{equation}
Then (\ref{e1301141845}) can be read of from this rhs. 
   Clearly, for $E=\{1,2,\dotsc,d\}$ this includes $\bbR^d$-valued processes.

Then first the question arises whether the processes describing features of the {\em history} or {\em genealogies} of individuals of a ``branching'' population have an analogue property and structure.
The problem being that the \emph{present state} contains information about the state at \emph{past times}.
In other words, can we define an abstract \emph{generalized branching property} dealing with this problem.
This complements the investigation of a concept of  \emph{infinite divisibility} for \emph{genealogical} structures modeled as ultrametric measure spaces which is introduced in \cite{infdiv} which presents a generalized infinite divisibility since the classical one does not hold.

However even if we can define a branching property for the objects, in general it may be difficult to obtain analytical expressions for $P_t(x,\cdot)[f]$ to \emph{check}  in practice whether \eqref{e1301141845} holds.
In particular often an analogue of \eqref{e1301} cannot be found.
Instead of working with the Markov process' transition probabilities $(P_t)_{t \geq 0}$ or the martingale problem itself we may use its \textit{operator $A$}.
Here we hope to derive a statement in the following spirit:
{\em Suppose the generator of a Markov process (or operator of a martingale problem) has a particular form, then the Markov process is a branching process} and similarly for the generalized branching property. 
Here one has the Kurtz criterion in \cite{EK86}.
The point however is that this particular form of the generator criterion needs to be such that it is {\em easily} verifiable.
This is important for \emph{general} state spaces and more complex evolution mechanisms for example for evolving genealogies.
This means in the new cases we want to cover, we have to go \emph{beyond} the \emph{Kurtz criterion} in \cite{EK86} by using an \emph{additional structure}. 
To find such a structure and criterion is our second goal.

Therefore we first {\em extend the concept} of branching processes to more general state spaces and evolution mechanisms with certain \emph{algebraic} and fitting \emph{topological} properties.
Second we find the {\em characterization of operators} for processes which have the generalized branching property.
In particular we will present simple criteria which the generator has to 
satisfy in order for the processes to have the (generalized) branching property. 
One of the criteria is such that it can be checked by explicit calculation for complicated processes {\em if we have checked that our setup applies}, which needs of course some work, since it requires that our process is given by a well-posed martingale problem.  

We briefly explain below first using classical examples how the criterion works in a simpler context where also the classical branching property holds and this is verified by this criterion here.
Then we come to our third goal and check the generalized branching property for the genealogy of \emph{Fellers diffusion model} or the \emph{super random walk}.

We finally come to our \emph{fourth} goal where we use the criterion to get the generalized branching property in a spatial model where every individual alive today is equipped with its \emph{ancestral path}.
This is the line of descent moving geographic or type through space till the own birth time, that of the father etc.
This process we define and characterize here via a \textit{well-posed martingale problem}.
We will argue why this new class of processes with values in marked ultrametric measure spaces,
is in fact a {\em very general class} of population processes in this context of describing genealogies and histories.
This process has some relation to the "snake" of LeGall \cite{LG99} formulated via labeled trees.
We construct our object in Section~\ref{ss.histbranpro} and ~\ref{ss.outquest} combining the approach based on historical processes from \cite{DP91} with the approach based on the ultrametric measure space valued description of genealogies \cite{GPWmp13}, \cite{DGP13} and \cite{GSW}.
The new processes constitute the class of {\em (ancestral path)-marked $\U^V$-valued Feller resp. super random walk processes} and if we model the population \textit{ever} alive we obtain processes also \emph{including fossils}.

The term very general is here meant in the sense that a multitude of processes typically studied in population models can be embedded into this class.
\emph{Therefore we prove here the criterion works in this case} which is then one of the deeper reasons, that we have the branching property for all the embedded Markovian processes often studied in their own right.
As there are functionals typically not one-to-one the proof of the branching property for them requires work (for example getting the Markov property).
We come to this at the end of Section~\ref{s1501141699}.

We cannot cover here everything of interest however.
It would be nice to cover the case of the genealogy of the Dawson-Watanabe process.
Here however at least in the $d \geq 2$ case the \emph{martingale problem formulation} of the process has a different form on a continuum geographic space and preparing this needs some effort, which would take too much space in this paper and will be treated in forthcoming work.
However we can define the $\U^{\R^d}$-valued process as a functional of the historical Dawson-Watanabe process to which we can apply our criterion so that we obtain the branching property of that functional as well.
Similar issues arise also for continuum state branching processes not of the diffusive type.

A corresponding interesting {\em open problem} is to find other examples of processes fitting the abstract algebraic and topological framework, but are \emph{not} arising  from genealogies or histories.

\paragraph{The classical framework and the criterion of Kurtz} \; In classical situations the state space $ S $ has the following features, where we will always use $\S$ as a generic notation for a {\em Polish state space}.
We will for 
now assume that {\em $\S$ is a topological semigroup with operation $+$}. 

Let us recall the following 
observations from \cite{EK86}[Section 4.10]. 
Assume the following properties, first

\begin{equation}\label{1371}
D\subset D' = \{f\in\textrm{b}\mcB(\S):f(w+z)=f(w)f(z), \text{ for all } w,z \in \S \}
\end{equation} 
is a set of multiplicative functions and $A: D \to \textrm{b}\mcB(\S)$ is a linear operator and $X^x$ solves the martingale problem for $(A,D,\delta_x)$ where $x\in \S$.
Furthermore assume that $X^x$ has the (classical) branching property meaning for every $x$ (\ref{e1301141845}) holds. 
Finally let $X^y$ be a solution of the $(A,D,\delta_y)$ martingale problem, $y \in \S$, which is independent of $X^x$. 

Then for any $f \in D$, the process
\begin{equation}\label{e521}
  f(X^y_t+X^y_t)-\int_0^t Af(X^x_s)f(X^y_s)+f(X^x_s)Af(X^y_s)\,\dx s,\, t \geq0
 \end{equation}
defines a \emph{martingale}. This in turn implies for the \emph{operator} $A$ that
\begin{equation}\label{e1301141729}
  Af(x+y)=f(y)Af(x)+f(x)Af(y)\,.
\end{equation}

Conversely, by uniqueness, if (\ref{e1301141729}) holds this implies that $X^{x+y}$ has the 
same distribution as $X^x+X^y$ where $X^x$ and $X^y$ are independent. This is the \emph{branching 
property}. 

Thus, one can indeed prove that                                                
whenever the martingale 
problem for $(A,D,\delta_x)$ is well-posed for all $x$ and $D\subseteq D'$ then the solution 
process has the \emph{branching property iff (\ref{e1301141729}) holds}.

Obviously, for complicated operators $A$ it will not be obvious whether or not (\ref{e1301141729})
is satisfied. Hence we would like to have a criterion for a generator which is easy to check and
which guarantees (\ref{e1301141729}). 
That here the {\em linearity} of the rate may play an important role was discovered and used used for $\R^+$-valued processes in \cite{caballero2009}. 
This approach can be formulated more abstractly as follows.

\paragraph{Towards a new operator criterion}
It is easy to check that (\ref{e1301141729}) is satisfied
if for any $f\in D\subseteq D'$ there exists a {\em semigroup homomorphism} $g_f:\S\to (\bbR,+)$ (typically \emph{depending on $f$})  such that for all $x\in 
\S$,
\begin{equation}\label{e1301141742}
 Af(x)=g_f(x)f(x)\,.
\end{equation}

This works very well for the classical branching processes with values in \textit{linear} spaces (where we get the classical branching porperty), it is however not working in the case of genealogical or historical processes as we shall explain latter on.
Therefore~\eqref{e1301141742} will be the \emph{starting point} for our criterion we develop below, which needs some \emph{new} elements if the state space is \emph{not} anymore a \emph{linear space} as in the classical situation and also the mechanism of the process is of a different nature so that we get only the \textit{generalized} branching property.

\paragraph{The new criterion in some classical cases (Examples)}
Some well-known examples for branching processes show that indeed \eqref{e1301141742} is a good criterion for the classical branching property as well and that $g_f$ can be specified \emph{explicitly}.

\begin{example}\label{ex.572}
First consider the simplest example, the classical continuous time 
{\em critical binary Galton-Watson} process with branching rate $b>0$, meaning we have $exp(b)$-distributed splitting or death times. That is, $E=\bbZ_+$, 
$D=\{\bbN_0\to\bbR, x\mapsto e^{-\lambda 
x}:\lambda>0\}$, $Af(x)=\frac12 bx(f(x+1)+f(x-1)-2f(x))$. The martingale problem for $(A,D)$ is well-posed, see Section 8.3 in \cite{EK86} and for $f \in D$:
\begin{equation}\label{e1302}
 Af(x)=\frac12 b x (f(1)+f(-1)-2f(0)) f(x)\,, \, x \in \N .
\end{equation}
So, we can choose the homomorphism as:
\begin{equation}\label{e605}
g_\lambda(x)=\frac12 bx (f(1)+f(-1)-2f(0))=\frac12 b(e^{-\lambda} + e^\lambda -2)x, x \in \N
\end{equation}
and (\ref{e1301141742}) is satisfied. \qed
\end{example}

\begin{example}\label{ex.583}
Now, consider the class of {\em measure-valued continuous state branching processes} (CSBP). 
(We \textit{specialize} later to the case where the description via a well-posed martingale problem has already been established.)
They are 
spatial analogues of the real-valued CSBP, see \cite{LG99}. More precisely, let 
$E$ be a locally compact metric space and let  $\S = \mcM_f(E)$, the space of finite measures on $E$. We 
denote the process by $X=(X_t)_{t\geq0}$. Intuitively, $X$ locally behaves like a CSBP plus there 
is migration/mutation in $E$ governed by a Feller process with generator $(B,\mcD(B))$, say. The 
branching dynamics is locally determined by the branching mechanism, 
where for each $x\in E$, $\alpha(x)\in\bbR$, $\beta(x)>0$ for all $x\in E$
\begin{equation}\label{e1303}
\psi(x,u)=\alpha(x) u+\beta(x) u^2+\int_{(0,\infty)} (e^{-ru}-1+ru)\,\pi(x, \dx r)\,
\end{equation}
and the measure $\pi(x,\dx y)$ satisfies $\sup_{x\in E} (\int_0^1 r^2 \pi(x,\dx r)+
\int_1^\infty r \pi(x,\dx r))<\infty$. The generator of $X$ then takes the following form,
see \cite{D93}[page 106]:
\begin{equation}\label{ag100} \mbox{with\;} D:=\{F(\mu)=F_\phi(\mu)=f(\langle 
\mu,\phi\rangle):\phi\in \mcD(B)\} \mbox{, choose  } f(x)=e^{-x}
\end{equation} 
and set for arbitrary $\phi \in \mcD(B)$ 
\begin{align}\label{e1304}
 \Omega F_\phi(\mu)
 &=f'(\langle\mu,\phi\rangle)\langle\mu,B\phi\rangle
 +f'(\langle\mu,\phi\rangle)\langle\mu,\alpha\phi\rangle
 +\frac12 f''(\langle\mu,\phi\rangle)\langle\mu,\beta\phi^2\rangle\\
 &\quad\,+\int_E\mu(\dx x)\int_0^\infty n(x,\dx u)[f(\langle\mu,\phi\rangle)+u \phi(x))
 -f(\langle\mu,\phi\rangle)-f'(\langle\mu,\phi\rangle)u\phi(x)]\,.\nonumber
\end{align}
The martingale problem for $(\Omega,D,\delta_\mu)$ is 
well-posed for all $\mu\in\mcM_f(E)$. One 
can easily check that
\begin{equation}\label{e604}
  \Omega F_\phi(\mu)= g_\phi(\mu) F_\phi(\mu),
\end{equation}
with the linear function 
\begin{align}\label{e608}
 g_\phi(\mu)&=-\langle\mu,B\phi\rangle -\langle\mu,\alpha \phi\rangle
 +\langle\mu,\frac12\beta\phi^2\rangle +\int_E\mu(\dx x)\int_0^\infty n(x,\dx u)[f(u\phi(x))-1+u(\phi(x))]\,.
\end{align}
Hence criterion (\ref{e1301141742}) is satisfied. Note that if $E$ is a point this covers the well-known $\bbR$-valued CSBP. \qed
\end{example}

\begin{example}[Shortlist of classical examples]\label{ex.707}
We could add here \emph{spatial} models like \emph{super random walk} or \emph{multitype branching with mutation} and many more as long as we have a characterization of the process as \emph{well-posed martingale problem} of the form given by an operator $A$.
The latter is a restriction, as we will see in the context of the Dawson-Watanabe process which we can only treat with our method so far in $d=1$. 
Given such a characterization the criterion is easily checked again, by explicit calculation left to the reader.
\end{example}

These examples show that the criterion given in (\ref{e1301141742}) is indeed 
{\em easy to verify}. For the classical processes mentioned above the branching property is 
typically proven by verifying (\ref{e1301}), see for example \cite{D93}[Chapter 4] for the 
measure-valued setting and \cite{LG99}[Section II.1] for the real-valued setting. 

However, the Laplace approach turns out to be difficult to apply to {\em more general classes} of 
processes like {\em $\U$-valued} branching processes while the \emph{generator approach introduced above still works} to at least give a generalized branching property.
One reason is that the classical approach of showing the branching property via Laplace transforms (evolution equations) depends on the underlying state space being {\em linear}, or at least a convex cone, 
which is not true for $\U$-valued processes.
It turns out however that the generator approach can be extended and the branching property \emph{generalized} to cover {\em genealogy-valued branching processes} (in ultrametric measure space description) and its spatial versions as well as {\em historical processes}, where the classical property does not hold.

\paragraph{A generalized branching property.} The classical concept of branching processes can be modified and then extended to much more \textit{general mechanism} and with it state spaces, where the branching property in the classical sense \emph{is false} as for example Markov processes of evolving {\em histories} and {\em genealogies} in particular also in the description with marked ultrametric measure spaces. 
The idea of the {\em generalized branching property} in that context is as follows. 

We have to 
incorporate the possibility that sub-populations originating from the population at time $s$ still at a later time $t$ \emph{share some common information about the population} and hence, {\em conditional on the 
information up to time} $s$ {\em fail to be independent} and hence the classical  branching property is false. 

For \textit{example}, consider a Galton-Watson branching 
dynamics on $\N_0$. 
Conditional on the information up to time $s$, the sizes of the subfamilies  at a later time $t$ originating from the individuals alive at time $s$ are independent. However, if we 
incorporate the genealogical relationships into the state space {\em independence fails}. The reason 
is that unless the population at time $s$ consisted of only one individual all individuals at 
time $t$ are connected by ancestral lines going back beyond time $s$.
Another example for the same problem occurs for instance if in addition to branching the  individuals migrate independently from each other in geographic space or mutate in type space (genome). 
Then if we include information about their path in geographic or type space up to the present time $s$, then independence \textit{fails}.

However we would like to consider both these processes as \emph{generalized}  branching processes. 
Since we have no branching property as a process with values in $ S $ in the classical sense we have to introduce some additional structure on $ S $ and to define a (new) \emph{general branching property}.

The branching property is intimitely connected with the concept of \emph{infinite divisibility}.
In \cite{infdiv} we developed a new concept of infinite divisibility for genealogy valued processes, which used some abstract algebraic and fitting topological structures which allowed to prove basic facts on infinitely divisible random variables with states in genealogies.
This allowed to prove the (generalized form of) \Levy-Khintchine representation of genealogy valued random variables.
This is based on algebraic arguments which had been developed by Evans and Molchanov in \cite{EM14}.
We also formulated a branching property for stochastic processes on $\U$ and $\U^V$. 
We therefore make here again use of abstract concepts in this spirit and use the tools developed in that paper.

However we move on here to a \emph{more abstract approach} of the branching property which allows us to handle more examples as the historical process and path-marked genealogical processes as we shall see later on.
This allows us then in particular also to obtain new examples for processes whose marginal distributions have \Levy-Khintchine representations and inhomogeneous Poison point process representations.

This we next first motivate and then introduce formally in Section ~\ref{s1501141603}.

\paragraph{Idea of formalization for a general branching property and a generator criterion:} Intuitively, the solution of the problem from above is as follows. 
Here since we have to distinguish past and the future the time parameter $t$ will appear, the past is taken relative to $t$.
We have to replace the $+$-operation we have in the measure-valued process by an abstract algebraic operation $\sqcup^t$, in fact depending on a time parameter $t$ and which forms a semigroup for each $t \geq 0$.
Furthermore we need maps $T_t$, which extracts the part of the state at time $t$, which contains information on the past and the part which arises newly in the future.  
For that we then need the set $S_t$ of these pieces of the state and the map $T_t$ and the "$\sqcup^t$"-operation has to be compatible.

    The state space $S$ contains subsets $S_t$ for $t\geq 0$.
    Elements in $S_u$ contain genealogical information (or historical one) which has been generated at times at most $u$ back from the current time $t$.
    Formally we require $S_u \subseteq S_s$ for $u< s$.

    Furthermore we need three assumptions. Consider times $u<s<t$. 
{\em First}, we assume that there are {\em ``truncation maps''} $ T_s $ on $ S $ which allow us to remove at any time $t$ the information about the states of the process before any time $s<t$.
This is mathematically a function $T_{t-s}:\S\to \S_{t-s}$. 
{\em Second}, we assume that on each $\S_t$ there is a binary operation $\sqcup^t: \S_t\times \S_t\to \S_t$ we call {\em concatenation}.
This operations generate for each $ t > 0 $ a {\em topological semigroup structure}. This is the analogue of the addition in the classical settings with $\S$ a linear topological space.
\emph{Thirdly we will relate} $ T_t $ and $ \sqcup^t $ allowing to extend the latter naturally from $ S_t $ to $ \S \times \S $ using $T_t$.

The idea is now that a process which at time $t$ takes values in $\S_t$ is a {\em generalized branching process}, if 
for each $s<t$ conditional on time $s$, the $(t-s)$-truncation of a state in $S_t$ has the same distribution 
as the concatenation of independent subfamilies originating from the individuals alive at time $s$. 
Then we \emph{generalize} the generator criterion \ref{e1301141742} to this structure working with the whole collection of semigroups and truncations.

\paragraph{Outline}
The results on our four goals we present in two sections~\ref{s1501141603} and~\ref{s1501141699}.
We make these ideas described above rigorous in Section~\ref{s1501141603} and give the precise result, which characterizes the generalized branching property via the generator criterion.
In Section~\ref{s1501141699} we give an application to processes of evolving genealogies.
Namely to the $\U$-valued Feller diffusion in Section~\ref{ss.trvalFD} and in Section~\ref{ss.histbranpro} to the $\U^V$-valued super random walk and to historical processes or more general evolving populations with individuals carrying ancestral path.
Altogether we have the four main results Theorems~\ref{T:BRANCHING},~\ref{T.TVF.BRAN.PROP},~\ref{TH.SUPWALK},~\ref{T.ACBRANCH}, the first and the last the highlights.

The proof of the criterion is given in Section~ \ref{ss.T:BRANCHING}.
The proofs of all other statements, those concerning applications, are in Sections~\ref{s1501141604b},~\ref{S.VERCRIT} and then in ~\ref{ss.extmc} we formulate the additional arguments needed for the extension of the claims to \textit{spatial} models.

\section{Results 1: Generator Characterization of Generalized Branching}{\sectionmark{Results 1}}\label{s1501141603}
We now carry out the last two points of the last sections rigorously with the key result Theorem~\ref{T:BRANCHING}.
\paragraph{Formal framework} \; The whole section works for processes with Polish  \emph{state space} $S$ arising as solution of a \emph{well-posed martingale problem}.
We need the following assumptions on the state space $S$ which formalizes the maps $T_t$, {\em truncation} and the binary operation $\sqcup^t$, {\em concatenation} as well as their relation, which is a form of \emph{consistency} property, together with the further technical Assumption~\ref{a.7} below.

\begin{assumption}[Collection of semigroups with consistent  truncation maps]\label{a.6}
Let $\S$ be a Polish space. We use $\mcB(\S)$ for the Borel $\sigma$-field on $S$ and $\textrm{b}\mcB(S)$ to denote the bounded measurable functions on $\S$.

Assume there are $S_t \subseteq S$, $t \geq0$ with the following properties.

\begin{itemize}
\item $\S_s \subseteq \S_t$ for $0 \leq s \leq t$.  
\item $S_t$ is closed in $S$ for all $t \geq 0$.
\item There is for every $t>0$ a continuous mapping $T_t : \S \mapsto \S_t$, which is the identity if restricted to $S_t$: $T_t (x) = x$ for any $x \in \S_t$.
\item There is for all $t\geq0$ a binary operation $\sqcup^t:S_t\times S_t \to S_t$ such that $(\S_t, \sqcup^t)$ is a commutative topological semigroup with neutral element $0 \in S_0$. 
\item The extension of $\sqcup^t$ to all elements of $\S$ is defined for all $t \geq 0$ via
 \begin{equation}\label{e1306}
  \sqcup^t: \S \times \S \to \S_t,\, (x,y)\mapsto (T_t(x))\sqcup^t (T_t(y)) \, .
 \end{equation}
\end{itemize}
 For simplicity, we drop the index $t$ at $\sqcup^t$ if  it's clear from the context. \qed
\end{assumption}
There are many natural examples for this structure without the dependence on $t$, i.e.~$S=S_t$ for all $t$, namely all classical case and the classical branching property, see Example~\ref{ex.572}, ~\ref{ex.583} and the list~\ref{ex.707}, where we saw they include $S = [0,\infty)$ and $S = \mcM_f(E)$, the space of  finite Borel measures which both are semigroups when equipped with addition (of reals or measures, respectively).

However including {\em genealogical} or {\em historical} information requires $ t- $dependence and the generalized branching property, here are examples for both these  effects.

\begin{example}[Ultrametric measure space $\U$-valued processes]\label{ex.umspaces}
 Recall the setting of \cite{infdiv} and \cite{ggr_tvF14} using equivalence classes of ultrametric measure spaces and the semigroup of $t$-forests.
In that paper the evolving {\em genealogy of the population} alive at time $t$ was described via a set of individuals $ U_t $, the genealogical distance between individuals $ r_t(\cdot , \cdot) $ on $U_t \times U_t$,  a population size $ \overline{\mu}_t $ and a sampling (probability) measure $ \wh \mu_t $ on $ U_t $, altogether giving an \emph{ultrametric measure space} $(U_t,r_t,\bar \mu_t \wh  \mu_t)$ and finally with its isomorphy class $[U_t,r_t,\bar \mu_t \wh \mu_t]$ we get the elements of the \emph{state space} $ \U $ describing for us genealogies.
(These objects are called often trees in some literature even though they describe the subset of the \emph{leaves} of weighted $\R$-trees, since nevertheless for every ultrametric space there is an $\R$-tree such that the space can be mapped $1-1$ isometric in the set of leaves, see \cite{GPWmp13}, Remark 2.2 for details.)

Elements of $\U$ are {\em $ t- $truncated} by truncating the metric at $ 2t $ and two such objects are {\em concatenated}, denoted $\sqcup^t$, by taking the disjoint union of the sets of individuals, keeping the metric in each population and setting the distance between individuals from different sub-populations equal to $ 2t $ and by adding the (extended) measures.
 Then $\S = \U$, $\S_t = \U(t)^\sqcup$, the latter the equivalence classes of  ultrametric measure spaces of diameter at most $ 2t $ and $T_t \mfu = \lfloor \mfu \rfloor(t) = [U,r\wedge 2t,\mu]$ the $t$-truncation map, $t \geq0$, $\mfu = [U,r,\mu] \in \U$.
 Details are in Section~\ref{ss.trvalFD}. \qed
\end{example}
\begin{example}[Historical process: measures on ancestral paths]\label{ex.mesanc}
Consider $S_t = \mcM_f(D^t(\R,E)),t\geq0$ where $D^t(\R,E) = \{ f \in D(\R,E) | f(u) = f(0) \; \forall u \leq 0, f(u) = f(t) \;  \forall u \geq t\}$, 
as subsets of measures on \cadlag functions on $\R$ with values in $ E $, a locally compact metric space, for example $E$ would be a geographic space as $ \Z^d $ or $ \R^d $ or $E$ is a type space as finite sets (genome) embedded in $[0,1]$.
These paths could either represent the \emph{geographic location} in the past along the ancestral line through \emph{migration}, or of say a \emph{genetic type} evolving under \emph{mutation}.

Now there are two possibilities to proceed.
This is due to the fact that of interest for us is the ancestral path \emph{seen from the present individual}, i.e. all positions are relative to this present location or alternatively we can thin of the class of path differing only by time-shift as the key object.

(1) The set $ S_t $ consists then of the measures on the path constant except for an interval of at most length $t$.
The equivalence classes are taken w.r.t. \emph{translation in time} of the path.
The truncation of the state is the push forward under the map: replace the path after time $ t $  by its time $ t $ value (this means after the $ t-s $ truncation the equivalence class is a path which has evolved for time $ t-s $ and is otherwise constant, the bracket indicates taking the equivalence classes.
The binary operation is the one induced (on the equivalence classes $ [\cdot] $) by the addition of measures.

(2) Alternatively we could at time $ t $ \emph{shift all path by $ -t $} in the time index to get path which are not constant only for times in $ [-t,0] $.
The truncation and concatenation are defined analog to (1).

Indeed we shall later exploit mainly this second possibility.
See Section~\ref{ss.histbranpro}. \qed
\end{example}

Certain functions on the semigroup will play an important role.
\begin{definition}[$t$-multiplicativity and $t$-additivity]\label{d.tmultiadd}\mbox{}\\
Let $f:\S\to \bbR$ measurable and $t\geq0$. We say that $f$ is $t$-\emph{multiplicative on $\S$} if
 \begin{equation}\label{e1327}
  f(x_1\sqcup^t\dotsm \sqcup^t x_n)=f(x_1)\dotsm f(x_n)
  \,,\quad n\in\bbN\,,\,x_1,\dotsc,x_n\in \S\,.
 \end{equation}
 We say that $f$ is \emph{$t$-additive on $\S$} if
 \begin{equation}\label{e1328}
  f(x_1\sqcup^t\dotsm\sqcup^t x_n)=f(x_1)+\dotsm+f(x_n)
  \,,\quad n\in\bbN\,,\,x_1,\dotsc,x_n\in \S\,.
 \end{equation} \qed
\end{definition}
\begin{remark}\label{r.740}
 \begin{enumerate}
\item In fact the previous definition means that multiplicative functions $f$ are semigroup-homomorphisms $f: (\S_t,\sqcup^t) \to (\R, \cdot)$ {\em except} for the usual continuity assumption for topological semigroup homomorphisms.
  Likewise for $t$-additive functions and the semigroup $(\R,+)$.
\item The previous definition implicitly implies that $f(x)=f(T_t(x))$, $x \in \S$ for a $t$-multiplicative (or  $t$-additive) function. This can be seen in the case of a $t$-multiplicative function via
  \begin{equation}\label{e1329}
   f(x) = f(x) f(0) = f(x\sqcup^t 0) = f(T_t(x) \sqcup^t 0) = f(T_t(x)) \, .
  \end{equation}
 \end{enumerate} \qed
\end{remark}
In order for the above functions being a rich enough set to work with later on we complement the Assumption~\ref{a.6} and require:
\begin{assumption}\label{a.7}
 For any $t\geq0, b\CB(S_t)$ contains a set $D_t \subset \{f \in b\mcB(S) | f(x+y) = f(x)f(y),\, x,\, y \in S_t\}$ of functions which are all strictly positive and the set separates points in $S_t$, i.e.\ for all $x \neq y \in S_t$ we can find $f \in D_t$ with $f(x) \neq f(y)$. \qed
\end{assumption}
The examples~\ref{ex.umspaces} and~\ref{ex.mesanc} above will be shown to satisfy this further condition.

\paragraph{Key result}
Next we define the new concept of the {\em generalized} branching property which holds for a richer class of Markov processes then the Markov processes satisfying the classical one. 
This is formalizing the property that a process which at time $t$ takes values in $\S_t$ is a {\em generalized branching process}, if 
for each $s<t$ conditional on time $s$, the $(t-s)$-truncation has the same distribution 
as the concatenation of independent subfamilies originating from the individuals alive at time $s$. 
\begin{definition}[Generalized branching property]\label{d.genbranpr}\mbox{}\\
Suppose Assumptions~\ref{a.6} and~\ref{a.7} hold.
We are given furthermore a Markov process $(X_t)_{t \geq 0}$ with values in $S$ defined by a family $(P_t)_{t\geq0}$ of probability kernels $P_t:\S \times \mcB(S) \to [0,1]$, $t\geq0$, given via
\begin{equation}\label{e773}
P_t(x,f) = \E[f(X_t)| X_0 = x],\ x \in \S, f\in \textrm{b}\mcB(S), t\geq0.
\end{equation}

The process has the \emph{generalized branching property} if
\begin{equation}\label{e1330}
 P_t(x_1\sqcup^s x_2, h_t) =   P_t(x_1,h_t)  P_t(x_2,h_t) , \quad x_1,x_2 
\in \S_s \,, \end{equation}
 for any $s, t\geq0$ 
and $h_t \in b\mcB(\S)$ $t$-multiplicative on $\S_t$. 

This defines as well the generalized branching property for a solution of a well-posed martingale problem on $D([0,\infty),S)$. \qed
\end{definition}
We write $ h $ for $ h(t,x)=h_t(x), (t,x) \in \R \times S $.
Of course in the previous definitions it suffices to consider a {\em separating subset} of $D_t$ functions in \eqref{e1330}.
\begin{remark}[Terminology]\label{r.948}
Note that we really have here a "$((S_t,\sqcup^t)_{t \geq 0},(T_t)_{t \geq 0})$-branching property", but since we will specify these ingredients in examples we suppress this in the notation throughout.
\end{remark}
\begin{remark}[Time-homogenity of $(X_t)_{t \geq 0}$]\label{r.951}
If we want to treat the time-inhomogeneous case we would have to consider $((S_{u,t},\sqcup^{(u,t)})_{t \geq u}, (T_t)_{t\geq u})_{u \geq 0}$ here setting of a cascade of indices which we want to avoid to focus on the key point.
\end{remark}
\begin{remark}[Measurable path]\label{r.936}
Recall at this point that solutions of a martingale problem must have a version with \textit{measurable} path, since otherwise the defining relation for the martingale problem is not welldefined, since the compensator would not be properly defined.
This property is implied by the stochastic continuity of the process, but is in infinite dimensional state spaces not equivalent to it, therefore requiring stochastic continuity is a restriction.
\end{remark}

From our heuristic reasoning it should be clear that the time an evolution has run is of importance.
Therefore we will even in the time-homogeneous case i.e. a time independent operator for our martingale problem generating the process $X$ consider the time-space process $(t,X_t)_{t \geq 0}$ to have the time the evolution has run explicitly coded in the state.

We can now formulate the main result which gives the {\em characterization of operators} $ A $ of processes $(X)_{t \geq 0}$
which satisfy the {\em generalized branching property}.
We consider here the time-space process $ (t,X_t)_{t \geq0}$ with state space $[0,\infty) \times S$ and operator $A + \frac{\partial}{\partial t}$.
By \cite{EK86}[Lemma 4.3.2] the {\em wellposedness of the $(A,D)$-martingale problem implies wellposedness of the $(\tilde{A},\tilde{D})$ martingale problem.}

\begin{theorem}[Criterion for operators of generalized branching  processes]\label{T:BRANCHING}\mbox{}\\ 
Suppose Assumptions~\ref{a.6} and~\ref{a.7} hold and let $ D_t $ denote the set introduced there.
Let
 \begin{equation}\label{e.tr11}
  \tilde{D}\subset\{(x,t)\mapsto\psi(t) h_t(x):
  \,\psi \in C^1_b (\R,\R),\, h_t \in D_t,\, (t\mapsto h_t ) \in C^1(\R,C_b(\S))\}
 \end{equation}
and let furthermore be given $\tilde{A} = A + \partial_t :\, \tilde{D} \to B(\R \times \S)$.
Finally assume that for any $(x,0) \in \S\times 
 \R$ the following holds.

 For any two solutions $(t,X_t)_{t\geq0}$ and 
$(t,X'_t)_{t\geq0}$ of the martingale problem  for $(\tilde{A},\tilde{D},\delta_{(x,0)})$ one has 
$T_t X_t\overset{\textup{d}}{=} T_t X'_t$ for every $t>0$,
and a solution $(t,X_t)_{t\geq0}$ has a stochastically continuous version.

Under those conditions the family $(P_t)_{t\geq0}$ as in \eqref{e773}
 has the generalized \emph{branching property} if and only if either of the following conditions is satisfied:
 \begin{enumerate}
 \item\label{i:t:1}  For $x_1, x_2 \in \S_t, \, \psi h \in \tilde{D}  , \, t \geq0$: 
\begin{equation}\label{eq:4.10.1}
  \tilde{A} \psi (t) h_t(x_1 \sqcup^t x_2) = \psi'(t) h_t(x_1 \sqcup^t x_2) 
  + \psi(t) [h_t(x_2) \tilde{A}h_t(x_1) + h_t(x_1) \tilde{A}h_t(x_2)]. 
 \end{equation}
 \item\label{i:t:2} For each $\psi h\in \tilde{D}$ there exists a function $g_{\psi,h}:\R_+ \times \S\to\bbR$ 
such that $g _{\psi,h}(t,\cdot)$ is $t$-additive for each $t\geq0$, the $h_t$ are all strictly positive and, for all $(t,x)\in \bbR_+\times \S_t$,
\begin{equation}\label{eq:lin:gen}
 \tilde{A}\psi(t) h_t(x) = \psi'(t) h_t(x) + \psi(t) g_{\psi,h}(t,x) h_t(x)\,.
 \end{equation}
\end{enumerate} \qed
\end{theorem}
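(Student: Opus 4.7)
The plan is to adapt the classical Kurtz strategy (cf.\ \eqref{e521}--\eqref{e1301141729}) to the time-dependent semigroup structure $(\sqcup^t, T_t)_{t \geq 0}$, splitting the argument into three parts: the algebraic equivalence of \ref{i:t:1} and \ref{i:t:2}; the sufficiency direction (either operator condition implies the generalized branching property) via a product-martingale construction; and the necessity direction (branching implies the operator identity) via differentiation of the branching identity.

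For \ref{i:t:1} $\Leftrightarrow$ \ref{i:t:2}: the product rule for $\partial_t$ gives $\tilde{A}(\psi h)(t, x) = \psi'(t) h_t(x) + \psi(t) \tilde{A}h_t(x)$, so \eqref{eq:lin:gen} amounts to $\tilde{A}h_t(x) = g_{\psi, h}(t, x) h_t(x)$. Substituting $x = x_1 \sqcup^t x_2$ with $x_1, x_2 \in \S_t$ and using the $t$-multiplicativity of $h_t$ and the $t$-additivity of $g_{\psi, h}(t, \cdot)$ immediately yields \eqref{eq:4.10.1}. Conversely, strict positivity of $h_t$ lets one define $g_{\psi, h}(t, x) := \tilde{A}h_t(x)/h_t(x)$; then \eqref{eq:4.10.1} combined with $t$-multiplicativity of $h_t$ forces $g_{\psi, h}(t, \cdot)$ to be $t$-additive on $\S_t$.

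For sufficiency, let $X^{x_1}$ and $X^{x_2}$ be independent solutions of the time-space martingale problem for $(\tilde{A}, \tilde{D})$ started at $(0, x_1)$ and $(0, x_2)$ with $x_1, x_2 \in \S$, and define the combined process $Z_t := T_t(X^{x_1}_t) \sqcup^t T_t(X^{x_2}_t) \in \S_t$. The product process $(t, X^{x_1}_t, X^{x_2}_t)$ on $\R \times \S \times \S$ solves the martingale problem with generator $\partial_t + A \otimes I + I \otimes A$. Applying this generator to $(t, y_1, y_2) \mapsto \psi(t) h_t(y_1) h_t(y_2)$ yields, after collecting terms, $\psi'(t) h_t(y_1) h_t(y_2) + \psi(t)\bigl[h_t(y_2) \tilde{A}h_t(y_1) + h_t(y_1) \tilde{A}h_t(y_2)\bigr]$; invoking condition (b) together with Remark~\ref{r.740} (which gives $h_t = h_t \circ T_t$) and the analogous identity $\tilde{A}h_t = \tilde{A}h_t \circ T_t$ forced by the $t$-additivity of $g_{\psi, h}$, this reduces to $\tilde{A}(\psi h)(t, Z_t)$. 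Hence $(t, Z_t)$ solves the time-space martingale problem with initial state $Z_0 := T_0(x_1) \sqcup^0 T_0(x_2) \in \S_0$. The uniqueness-up-to-$T_t$ assumption then yields $Z_t = T_t Z_t \stackrel{d}{=} T_t X^{Z_0}_t$, and testing with $h_t$ gives $\E[h_t(X^{x_1}_t)]\E[h_t(X^{x_2}_t)] = \E[h_t(Z_t)] = \E[h_t(X^{Z_0}_t)]$, which is the branching property at $s = 0$ when $x_1, x_2 \in \S_0$. For general $s \geq 0$, specialising the above to $x_2 = 0$ and using $V_t(0) = 1$ (which follows from the $s = 0$ branching property applied at $x_1 = x_2 = 0$ together with strict positivity of $h_t$) shows that $y \mapsto P_t(y, h_t)$ factors through $T_0$, and the compatibility $T_0 \circ \sqcup^s = \sqcup^0 \circ (T_0 \times T_0)$ implicit in the consistency of Assumption~\ref{a.6} then delivers the branching property for all $s$.

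For necessity, the Markov property at time $t$ reduces the problem to proving \eqref{eq:4.10.1} at the level of $u$-derivatives at $u = 0$ for solutions started at $x_1 \sqcup^t x_2$. The branching identity $\E[\psi(t+u) h_{t+u}(X^{x_1 \sqcup^t x_2}_u)] = \E[\psi(t+u) h_{t+u}(X^{x_1}_u)]\E[\psi(t+u) h_{t+u}(X^{x_2}_u)]$, differentiated at $u = 0$, produces $\tilde{A}(\psi h)(t, x_1 \sqcup^t x_2)$ on the left (via the martingale problem) and, by the product rule on two independent copies, the right-hand side of \eqref{eq:4.10.1}. The principal obstacle is the sufficiency direction: the delicate bookkeeping of the time-dependent semigroup maps $(\sqcup^t, T_t)_{t \geq 0}$, the identification of the initial state $Z_0$ of the combined process, and the passage from the branching property at $s = 0$ to general $s > 0$, which rests on the consistency properties implicit in Assumption~\ref{a.6}.
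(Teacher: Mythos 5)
Your proposal is correct and follows essentially the same route as the paper's proof: the algebraic equivalence of \ref{i:t:1} and \ref{i:t:2} via strict positivity of $h_t$ and $t$-additivity of $g_{\psi,h}=\tilde{A}h_t/h_t$, and sufficiency via the Ethier--Kurtz product-martingale construction for two independent solutions followed by the uniqueness-of-$t$-truncations assumption. The only (cosmetic) differences are that the paper spells out the Riemann-sum/stochastic-continuity argument you delegate to the standard product result, and starts the combined process directly at $x_1\sqcup^s x_2$ instead of reducing general $s$ to $s=0$ via $T_0$-factoring.
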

Note that $(a)$ is the generalized Kurtz criterion and it is $(b)$ which we will apply in examples.

In the next section we apply part (b) of this result to {\em $\U$-valued} and {\em historical} processes to get \emph{new} examples for the generalized branching property, where the "classical" one does \textit{not} hold. 
In this section we conclude by explaining how the previous result simplifies if $\S_t=\S$ for all $t\geq0$ as in our classical examples~\ref{ex.572}-~\ref{ex.707}, where then the classical branching property holds.
This applies in particular to all classical branching processes mentioned in the introduction, hence it gives an alternative proof of the branching property of measure-valued CSBP in particular of all real-valued CSBP. 

Indeed if $\S= \S_t$ for all $t \geq0$ then multiplicativity of $h_t$ on $\S_t$ just means that $h_t$ is 
multiplicative on $\S$. Assume $D\subseteq \textrm{b}\mcB(S)$ is a set of multiplicative functions and
\begin{equation}\label{e800}
  \tilde{D} \subset \{ (x,t) \mapsto \psi(t) h(x): 
  \, \psi(t) = e^{-\lambda t}, \, \lambda > 0,\, h \in D\}.
\end{equation}
We get, recall from above $(A,D),(\wt A,\wt D)$-well-posedness are equivalent, the following corollary to Theorem~\ref{T:BRANCHING}  giving the {\em classical examples} in \eqref{e1302}-\eqref{e608} by choosing for $ D $ the functions specified there and for $ \sqcup $ just $ + $.

It is then immediate from Theorem~\ref{T:BRANCHING}, that we have the following.

\begin{corollary}[Branching generator: classical case]\label{c.brangen}
Assume $\S$ is a Polish space.
The test functions $D$ satisfy, 
(i) $D \subset \textrm{b}\mcB(S)$ is a set of multiplicative functions  on $\S$, (ii) contains only strictly positive functions and is (iii) separating. 
Finally require that the 
$(A,D)$-martingale problem is well-posed and has a stochastically continuous solution $(X_t)_{t\geq0}$. 

Then the semigroup associated to $(X_t)_{t\geq0}$ has the branching property 
if and only if either of the following conditions is satisfied.
\begin{enumerate}
 \item\label{i:t:1+}  For all $x_1, x_2 \in \S, \, h \in D$: 
\begin{equation}\label{eq:4.10.1+}
  A h(x_1 \sqcup x_2) = h(x_2) Ah(x_1)  + h(x_1) Ah(x_2). 
 \end{equation}
 \item\label{i:t:2+} For each $ h \in D $ there exists a $\sqcup$-additive function
$g_h:\S\to\bbR$, i.e.~$g_h(x_1\sqcup x_2) = g_h(x_1) + g_h(x_2)$ for any $x_1,x_2 \in \S$, with
\begin{equation}\label{eq:lin:gen+}
 A h (x) = g_h(x) h(x)\,,\quad x\in \S, \, h \in D.
\end{equation}
 \end{enumerate} \qed
\end{corollary}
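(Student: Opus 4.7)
The plan is to deduce Corollary~\ref{c.brangen} as the specialization of Theorem~\ref{T:BRANCHING} to the degenerate situation in which the consistent family $(S_t)_{t\ge 0}$ collapses to $S_t\equiv S$. Setting $S_t := S$ for all $t\ge 0$, the inclusion $S_s \subseteq S_t$ is trivial and $S_t$ is closed in $S$; the truncation maps $T_t:S\to S$ are the identity; the binary operation $\sqcup^t = \sqcup$ is $t$-independent and, by hypothesis, makes $(S,\sqcup)$ a commutative topological semigroup; the extension formula~\eqref{e1306} reduces to $\sqcup$ itself because $T_t$ is the identity. Thus Assumption~\ref{a.6} holds, and the set $D_t := D$ of strictly positive multiplicative separating functions supplied by the hypothesis verifies Assumption~\ref{a.7}. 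Under this specialization, $t$-multiplicativity of a function on $S_t$ is ordinary multiplicativity with respect to $\sqcup$.

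Next I would enlarge the test functions to
\begin{equation*}
\tilde D = \{\,(t,x)\mapsto \psi(t)\,h(x) \,:\, \psi(t)=e^{-\lambda t},\,\lambda>0,\,h\in D\,\},
\end{equation*}
which is of the form required in~\eqref{e.tr11} with the time-independent choice $h_t\equiv h$. The operator $\tilde A = A+\partial_t$ then acts by $\tilde A(\psi h)(t,x) = \psi'(t)h(x) + \psi(t)\,Ah(x)$. Well-posedness of $(A,D)$ implies well-posedness of $(\tilde A,\tilde D)$ by \cite{EK86}, Lemma~4.3.2, and since $T_t$ is the identity, the distributional coincidence $T_tX_t\eqd T_tX'_t$ required in Theorem~\ref{T:BRANCHING} collapses to plain well-posedness. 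Stochastic continuity of the solution is part of the hypotheses, so Theorem~\ref{T:BRANCHING} is applicable. Moreover the generalized branching property~\eqref{e1330} reduces here to $P_t(x_1\sqcup x_2,h) = P_t(x_1,h)\,P_t(x_2,h)$ for every $h\in D$, which, because $D$ is strictly positive, multiplicative and separating, is equivalent to the classical branching property~\eqref{e1301141845} with respect to $\sqcup$.

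It then remains to rewrite the two equivalent conditions of Theorem~\ref{T:BRANCHING} in our specialized setting. Multiplicativity of $h$ gives $h(x_1\sqcup x_2) = h(x_1)h(x_2)$, so the $\psi'$-terms on both sides of~\eqref{eq:4.10.1} cancel, and dividing by $\psi(t)>0$ yields~\eqref{eq:4.10.1+}. For condition~(b), \eqref{eq:lin:gen} becomes $\psi'(t)h(x)+\psi(t)\,Ah(x) = \psi'(t)h(x) + \psi(t)\,g_{\psi,h}(t,x)\,h(x)$, so after cancellation $g_{\psi,h}(t,x) = Ah(x)/h(x)$, which is well-defined because $h>0$. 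The only mildly delicate point, and really the sole obstacle to the otherwise routine translation, is this forced decoupling: because $A$ acts only on the spatial variable, $g_{\psi,h}$ is automatically independent of $\psi$ and $t$, so one may set $g_h(x) := Ah(x)/h(x)$ and transfer $t$-additivity of $g_{\psi,h}(t,\cdot)$ to $\sqcup$-additivity of $g_h$. The converse direction, that~\eqref{eq:lin:gen+} implies~\eqref{eq:lin:gen} with the choice $g_{\psi,h}(t,x) := g_h(x)$, is immediate, and this completes the proof.
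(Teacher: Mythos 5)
Your proposal is correct and follows exactly the route the paper intends: the paper declares the corollary ``immediate from Theorem~\ref{T:BRANCHING}'' after setting $S_t\equiv S$, $T_t=\mathrm{id}$, $\sqcup^t=\sqcup$ and $\tilde D=\{\psi(t)h(x):\psi(t)=e^{-\lambda t},\,h\in D\}$, and your write-up simply fills in the routine verifications (Assumptions~\ref{a.6} and~\ref{a.7}, cancellation of the $\psi'$-terms, and the identification $g_h=Ah/h$) that the paper leaves implicit.
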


\section[Results 2: New Applications for some evolving genealogies or histories]{Results 2: New Applications for some evolving genealogies  \newline or histories}{\sectionmark{Results 2}}\label{s1501141699}
This work was motivated by trying to generalize and prove the branching property for \emph{evolving genealogies} modeled as marked ultrametric measure spaces, where the classical branching property does not hold.
Of particular interest are \emph{Feller's diffusion model} and the \emph{super random walk}, see \cite{ggr_tvF14}.
Note that we need in order to apply our theory that the martingale problem is well-posed. 
Hence we need to know this either from the literature or we have to prove this here.
For reasons of keeping page numbers under control we give three examples and treat only one example, where we construct a new process and the others are chosen such that we can refer for the constructions of the process to existing literature.
The corresponding results we have here are Theorems~\ref{T.TVF.BRAN.PROP}
,~\ref{TH.SUPWALK},~\ref{T.ACBRANCH}.  

Indeed we now discuss two classes of examples, each in a subsection which are processes of {\em evolving genealogies} respectively {\em histories}, arising as diffusion limits of classical Galton-Watson models. 
First we treat the prototype case which motivated the present paper, the {\em $\U$-valued Feller diffusion}.

Then next we adapt this to the world of \emph{spatial} $\U^V$-valued branching processes covering in particular the {\em genealogical super random walk} or the corresponding {\em historical super random walk process}.
This shows the potential of the criterion for the study of population models. 
There is a whole zoo of further examples.
With the same method we could treat branching random walks, branching Brownian motions or multitype branching processes where individuals undergo mutation or more general continuous state branching processes.

Finally we formulate and construct a \emph{new type} of example the \emph{(ancestral-path)-marked} genealogical super random walk combining histories and genealogies.
This shows the robustness of our concepts and methods for a \emph{new} process of great interest in its own right.
\begin{remark}\label{r.1067}
The last model contains the other ones mentioned in the sense that they are functionals.
However that does not mean that we get the branching property that easy from the general result, since first of all we have to still check the functionals are Markovian, which requires wellposedness of their martingale problems to obtain the Markov property of the functional.
Furthermore we need to show that the ingredients $((S_t,\sqcup^t)_{t \geq 0}(T_t)_{t \geq 0})$ arise as projections.
Therefore we do not loose much by building our examples from bottom to top.
\end{remark}

 Then in the last subsection we describe further examples where the criterion is probably applicable if one carries out some extensions of the theory of $\U$-valued processes to construct them rigorously via well-posed martingale problems.
An example is the \textit{continuum space} spatial models as the Dawson-Watanabe  super process.

\subsection{$\U$-valued Feller Diffusion}\label{ss.trvalFD}

\paragraph{Description of current population and its genealogy}
We consider now the evolution of the genealogy of the \textit{population currently alive} (i.e. at a time $t$) described by an equivalence class of ultrametric measure spaces, in a continuum mass version of a critical binary  branching process. This is the \emph{$\U$-valued Feller diffusion}.
The \emph{main result} of this subsubsection is \emph{Theorem~\ref{T.TVF.BRAN.PROP}}.
However first we have to explain the state space and the process. 
To explain the method of describing genealogical information consider first a critical Galton-Watson process.

The idea is to give two individuals in the population alive at time $t$ a genealogical distance, which is twice the time one has to go back to the most recent common ancestor and to equip the population with the  uniform distribution.
Then we get an ultrametric measure space which describes the genealogy and whose evolution we follow. 
The exchangeability of names and with it the exchangeability of individuals with equal sampling weight then suggests to pass to \emph{equivalence classes} of these objects under \emph{weight preserving isometries}.
More precisely we proceed as follows and begin by shortly reviewing the state space of this process.

The genealogy is modeled as equivalence class of a ultrametric measure spaces $[U,r,\mu] $ where the set $ U $ describes the set of {\em individuals alive} at the current time, the ultrametric $ r $ on $ U $ the {\em genealogical distance} between individuals and $ \mu = {\bar \mu} {\wh \mu}, {\bar \mu} \in \R^+  $ the {\em population size} and $ {\wh \mu} \in M_1 (U,\mcB(U)) $ the {\em sampling measure} specifying how to draw samples of typical individuals from the population alive at time $t$ as for example the uniform distribution.
The \emph{equivalence class} is denoted, $ \mfU = [U,r,\mu] $, equivalence of representations is defined w.r.t. the {\em isometries} of the $ supp(\mu) \subseteq U $ which are {\em measure preserving}. 
For $ \bar \mu =0 $ the measure $ \wh \mu $ is not defined.
The corresponding element we call the zero tree.

The space of all equivalence classes is denoted
\begin{equation}\label{e881}
\U.
\end{equation}
Note that a finite set of points is as metric space characterized by the pairwise distances, i.e. the distance matrix.
The set $\U$ is equipped with the \emph{Gromov weak topology} and is with this topology a \emph{Polish} space.
The topology can be defined, see \cite{GPW09},\cite{Gl12} introducing a metric. 
It is well known however that this topology arises, if we require that a sequence of elements $\mfu_n$ converges to an element $\mfu$ iff the sequence of distance matrix  distributions $\nu_n$ converges to the distance distribution $\nu$ of all orders $m \in \{2,3,\ldots\}$.
Here the distance distribution of order $m$ arises (denoting with the $\Pi^\ast$ the push forward of measures under a map $\Pi$) as $R^\ast(\mu^{\otimes m})$ with $R: U^m \to (\R_+)^{n \choose 2}$ given by
\begin{equation}\label{e940}
R \left((x_i,x_j)_{1 \leq i<j\leq n}\right) = \left(r(x_i,x_j)\right)_{1 \leq i<j\leq m}.
\end{equation}

The description of genealogies by ultrametric \emph{probability}  measure spaces was introduced in \cite{GPW09} extending ideas appearing in \cite{Ev00} and \cite{EPW06}.
This was generalized to the case of \emph{finite} measures in \cite{Gl12} and to locally bounded measures in \cite{GSW}.

The objects of Assumption~\ref{a.6} now arise as follows.
Recall here that the ultrametric spaces can be represented by the set of leaves in an $\R$-tree.
This motivates the following definitions.
The state spaces are the equivalence classes $\mfu=[U,r,\mu]$ of ultrametric measure spaces $(U,r,\mu)$ called $\U,\U(h)^\sqcup $ are the elements of $\U$ with diameter at most $ h,\U(h) $ with diameter strictly less than $ h $ and one considers the {\em truncation of "trees"} of diameter $ t $ at height $ t-h $ for $ h \in (0,t] $ and the {\em $ h- $concatenation} of "trees" as the binary operation on $\U(h)^\sqcup$.

Formally define
\begin{eqnarray}\label{eq:theta:a}
\U(h)^\sqcup = \lbrace \mfu \in \U \mid \mu^{\otimes 2} (\lbrace(x,y)\in U^2 \mid r(x,y) > 2h\rbrace)=0
\\
\nonumber \\
\U(h) = \lbrace \mfu \in \U \mid \mu^{ \otimes 2} (\lbrace(x,y)\in U^2 \mid r(x,y) \geq 2h\rbrace)=0.
\end{eqnarray}

Then define for $ u,v\in \U(h)^\sqcup $ the {\em concatenation} (using $\uplus$ for disjoint union):
\begin{equation}\label{e1310}
\mfu \sqcup \mfv = [U \uplus V, r_U \sqcup^h r_V, \wt \mu + \wt v]\mbox{, with} 
\end{equation}
\begin{equation}\label{e1312}
r_U \sqcup^h r_V \mid_{U \times U} = r_U, \quad r_U \sqcup^h r_V \mid_{V \times V} = r_V,
\end{equation}
\begin{equation}\label{e1313}
r_U \sqcup r_V (x,y) = 2h, \; x \in U, y \in V
\end{equation}
and $ \wt \nu, \wt \mu $ the extension of $ \nu $ and $ \mu $ to the disjoint union which is zero on the respective other component.

The {\em $ h $-top $ \lfloor \mfu \rfloor (h) $} of $ \mfu \in \U $ is defined: 

\begin{equation}\label{e1314}
\lfloor \mfu \rfloor (h) = [U,r \wedge 2h, \mu] \in  \U(h)^\sqcup.
\end{equation}
Then we define the $h$-truncation:
\begin{equation}\label{e1315}
T_h(\mfu) = \lfloor \mfu \rfloor (h), \; S_h= \U(h)^\sqcup.
\end{equation}

\paragraph{The process: $\U$-valued Feller diffusion} 
We consider the $\U$-valued Feller diffusion $ (\mfU_t)_{t \geq 0} $ the process which corresponds to the large population-rapid reproduction-small mass limit of the genealogy of the population alive at time $ t $ of the critical Galton-Watson process. 
The process is the $ \U$-valued process related with its population size process $(X_t)_{t\geq0}$ to the solution of the SDE $\dx X_t = aX_t \, \dx t  + \sqrt{2bX_t} \, \dx B_t$, where $b>0$ and $a \in \R$. 
For the construction and uniqueness of $ \U$-valued processes already treated in the literature we refer to \cite{GPWmp13,Gl12} and for the present model and its spatial versions to~\cite{ggr_tvF14} where the process is constructed via a well-posed martingale problem and many properties of its longtime behaviour are studied.

We recall its operator for the \emph{martingale problem}. 
We need the concept of a polynomial to get the domain of the operator. 
Fix $ n \in \N $ and $ \phi \in C^1_b(\R^{(\substack{n\\2})}, \R)$. Then define for an equivalence class of an ultrametric measure space $ [U,r,\mu] $ the function

\begin{align}\label{e1309}
\Phi^{n,\phi}\Big(\left[U,r,\mu \right]\Big)= \int_{U^n} \phi \Big((r(x_i,x_j),\, 1 \leq i<j \leq n)\Big) \; \mu(dx_1) \ldots \mu(dx_n)
\end{align}
and the action of the operator is given as sum of two operators:
\begin{align}\label{e1305}
 \Omega^{\uparrow} \Phi^{n,\phi}(\mfu) = \Omega^{\uparrow,\mathrm{grow}} \Phi^{n,\phi}(\mfu) + 
\Omega^{\uparrow,\mathrm{bran}} \Phi^{n,\phi}(\mfu) 
\end{align}
and $\Omega^{\uparrow} \Phi^{n,\phi}(0) = 0.$
We need the notation
\begin{equation}\label{e.1143}
\bar \mfu=\mu(U)
\end{equation}
for the total mass of $(X,r,\mu)$, which is an invariant of $[X,r,\mu]$.
 The operators on the r.h.s. are given by
\begin{align}\label{e1307}
 \Omega^{\uparrow,\textrm{grow}}\Phi^{n,\phi}(\mfu)  &=  \Phi^{n,2 \overline{\nabla} \phi} (\mfu) , 
\quad \overline{\nabla} \phi =  \suml_{1\leq i<j \leq n} \frac{\partial \phi}{\partial r_{i,j}},
\end{align}
\begin{align}\label{e1308}
 \Omega^{\uparrow,\textrm{bran}}\Phi^{n,\phi}(\mfu) & = an \Phi^{n,\phi}(\mfu) + \frac{b}{\bar{\mfu}} \sum_{1\leq k < l 
\leq n} \Phi^{n,\phi\circ \theta_{k,l}} (\mfu) ,
\end{align}
 where
\begin{equation}\label{eq:theta}
  \left( \theta_{k,l} (\dr) \right)_{i,j}  := r_{i,j}\1_{\{i\neq l, j\neq l\}} + r_{k,j} 
\1_{\{i=l\}} + r_{i,k} \1_{\{j=l\}} ,\quad 1\leq i < j\,.
\end{equation}

Note that the {\em martingale problem} for $(\Omega^{\uparrow},\Pi(C_b^1))$, where $ \Pi(A) $ are the polynomials defined by $ \phi $  chosen from the set $ A $, has a \emph{unique} solution, see 
\cite{ggr_tvF14}.
This result is presented in Proposition~\ref{p.dual.ext}.

\paragraph*{Interpretation of generator}
This generator arises in this form  in a scaling limit of an individual based model (binary critical Galton-Watson process) as the number of individuals tends to infinity.
The first part correspond to the growth of the genealogical distance with time arising from the growth of the genealogical tree at the tree top only.
The second part describes the effect of a splitting in two branches of the genealogical tree upon a surviving birth.
The term $\bar \mfu$ in \eqref{e1308} arises as rate of the operator which in fact is the resampling operator defining the $\U$-valued Fleming-Viot model.
The background here is the fact that, if we have two groups of ancestors and follow the relative proportion of one among the complete population, we obtain a Fisher-Wright diffusion at a rate which is the inverse of the total mass.
 
See~\cite{DG18evolution} for more explanation and~\cite{ggr_tvF14} for the derivation from an individual based model and detailed information on the structure of the generator and its relation to the Fleming-Viot operator.

\begin{definition}[$\U$-valued Feller diffusion]\label{d.treeval}\mbox{}\\
We refer to the unique solution of the $ (\Omega^\uparrow, \Pi(C^1_b))-$ martingale problem as \emph{$\U$-valued Feller diffusion} and denote it by $\mfU = (\mfU_t)_{t\geq0}$. \qed
\end{definition}

\paragraph{The result}
We consider here as binary operation the {\em concatenation} $ \sqcup^s $ of trees in $ \U^\sqcup(s) $ and as operation $ T_t $ the {\em truncation} operation, which associates if we consider the $\R$-tree representation of the ultrametric space with such a tree its {\em $ t $-tree top} of depth $ t \leqq s$, with $ S_t $ the set of such objects.

We state the main result of this subsection obtained with our criterion.
\begin{theorem}[Generalized branching property: $\U$-valued Feller]\label{T.TVF.BRAN.PROP}\mbox{}\\ 
 The $\U$-valued Feller diffusion $ \mfU $ has the generalized  branching property. \qed
\end{theorem}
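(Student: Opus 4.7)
The strategy is to invoke Theorem~\ref{T:BRANCHING}(b) applied to the well-posed martingale problem $(\Omega^\uparrow,\Pi(C_b^1))$ of \cite{ggr_tvF14}. We take $\S=\U$, $\S_t=\U(t)^\sqcup$, $T_t=\lfloor\cdot\rfloor(t)$ as in \eqref{e1314}--\eqref{e1315}, and $\sqcup^t$ as in \eqref{e1310}--\eqref{e1313}. The proof then breaks into three pieces: verifying Assumptions~\ref{a.6} and~\ref{a.7}, extending the operator $\Omega^\uparrow$ to $t$-multiplicative test functions $h_t$, and checking the linearity identity $\Omega^\uparrow h_t = g_h(t,\cdot)\,h_t$ with $g_h(t,\cdot)$ $t$-additive.

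\emph{Algebraic framework.} Assumption~\ref{a.6} is largely built into the definitions of $\sqcup^t$ and $T_t$: commutativity, associativity and the neutral element (the zero tree) are immediate, and continuity of $\sqcup^t$ and of $T_t$ under the Gromov-weak topology is established in \cite{infdiv}. The monotonicity $\U(s)^\sqcup\subset\U(t)^\sqcup$ for $s\leq t$ is clear and $T_t$ restricts to the identity on $\U(t)^\sqcup$. For Assumption~\ref{a.7} we exploit that every $\mfu\in\U(t)^\sqcup$ carries a canonical decomposition into indecomposable sub-populations $\mfu_1,\dotsc,\mfu_k\in\U(t)$, namely the classes of $\supp\mu$ under the equivalence relation $\{r(x,y)<2t\}$. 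Any bounded continuous $f:\U(t)\to[0,\infty)$ then gives a $t$-additive functional $L(\mfu)=\sum_\alpha f(\mfu_\alpha)$, hence $h_t:=\exp(-L)$ is strictly positive and $t$-multiplicative on $\U(t)^\sqcup$. By varying $f$ over a rich separating family on $\U(t)$ (which exists since $\U(t)$ is Polish), the resulting class $D_t$ separates points in $\U(t)^\sqcup$; this is essentially the content of the \Levy-Khintchine theory in \cite{infdiv} applied locally.

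\emph{Checking the criterion.} Extend $\tilde A=\Omega^\uparrow+\partial_t$ from $\Pi(C_b^1)$ to the class $\tilde D$ of \eqref{e.tr11} by polynomial approximation of $h_t$, justified by uniform control of moments of $\bar\mfU_t$ which inherits Feller-diffusive integrability from the total-mass process. It then suffices to prove, for $h_t=\exp(-L)$ with $L$ of the additive form above, that $\Omega^\uparrow h_t(\mfu)=g_h(t,\mfu)\,h_t(\mfu)$ with $g_h(t,\cdot)$ $t$-additive. The growth part $\Omega^{\uparrow,\mathrm{grow}}$ differentiates pairwise distances; since concatenation only produces distances equal to $2t$ and $h_t$ depends solely on the intra-component structure of diameter $<2t$, this differentiation respects the sub-population decomposition and contributes $t$-additively. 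For $\Omega^{\uparrow,\mathrm{bran}}$ one rewrites $\bar\mfu^{-1}\Phi^{n,\phi\circ\theta_{k,l}}(\mfu)$ as an $(n-1)$-fold integral obtained by setting $x_l=x_k$, and observes that resamplings where the coalesced pair straddles two distinct sub-populations factor through a distance frozen at $2t$ and hence, upon the exponential reorganisation of $h_t$, produce no cross-component contribution; only resamplings within a single component $\mfu_\alpha$ survive, yielding exactly a sum over $\alpha$, i.e.~a $t$-additive function times $h_t$.

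\emph{Main obstacle.} The most delicate point will be making the reorganisation of the branching operator rigorous: despite the apparently non-local factor $1/\bar\mfu$, the $\sqcup^t$-additivity of the action on a generic $h_t\in D_t$ must be established by a careful combinatorial accounting that shows the cross-component resampling terms cancel or vanish. This is the algebraic reflection of the probabilistic fact that a sub-population's branching rate depends only on its own mass. A secondary but not trivial issue is the extension of $\Omega^\uparrow$ from $\Pi(C_b^1)$ to $\tilde D$, together with the verification of the uniqueness-of-truncation and stochastic-continuity hypotheses of Theorem~\ref{T:BRANCHING}; both should follow from the well-posedness and path regularity of the $\U$-valued Feller diffusion established in \cite{ggr_tvF14}.
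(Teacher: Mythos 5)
Your high-level route is the paper's route (Theorem~\ref{T:BRANCHING}(b) with $\S_t=\U(t)^\sqcup$, $T_t=\lfloor\cdot\rfloor(t)$), and your observation that $\bar\mfu^{-1}\Phi^{n,\phi\circ\theta_{k,l}}$ collapses to an $(n-1)$-fold integral whose cross-component contributions are killed by the truncation is exactly the mechanism behind the $\sqcup^t$-additivity of $g$ in Proposition~\ref{l.tr5}. However, there is a genuine gap at the heart of your construction of the multiplicative class $D_t$. You build $h_t=\exp(-\sum_\alpha f(\mfu_\alpha))$ from the hard decomposition of $\supp\mu$ into open $2t$-balls. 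Theorem~\ref{T:BRANCHING} requires $(t\mapsto h_t)\in C^1(\R,C_b(\S))$, and the time-space generator $\tilde A=\Omega^\uparrow+\partial_t$ must act on these functions; but the $2t$-ball decomposition varies discontinuously in $t$ (components merge as $t$ grows) and the associated indicator cutoff $\prod_{i<j}\1(r_{ij}<2t)$ is not differentiable in the distance variables, so your $h_t$ lies in the domain of neither $\partial_t$ nor $\Omega^{\uparrow,\mathrm{grow}}$. ``Polynomial approximation of $h_t$'' does not repair this: for an unbounded operator you need the approximants \emph{and} their generator images to converge, which is precisely what fails for hard truncations. The paper's essential device, which your proposal omits entirely, is the smooth sliding-window truncation $\varrho_t$ of Assumptions~\ref{a.1} and~\ref{a.2}, whose key property $(\partial_t+2\bar\nabla)\varrho_t=0$ (Lemma~\ref{l.tr3}) makes the $\partial_t$-term cancel against the growth operator acting on the cutoff; one then works with $h_t=\exp(-\Phi^{n,\phi\varrho_t})$, computes the generator via an It\^o-formula argument (Lemmas~\ref{L.EX} and~\ref{l.D_lin:tildeD}), and recovers the hard truncations only a posteriori by monotone approximation (Lemma~\ref{l.tr2}).

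A second, smaller gap is your treatment of the uniqueness hypothesis. Theorem~\ref{T:BRANCHING} requires that any two solutions of the martingale problem for $(\tilde A,\tilde D)$ --- with $\tilde D$ the \emph{new, truncated} test-function class of \eqref{e.tr11}, not the original polynomial domain --- have identically distributed $t$-tops. Since the truncated test functions only see the $t$-top of the state, well-posedness of the original $(\Omega^\uparrow,\Pi(C_b^1))$ problem does not directly transfer; the paper has to establish this separately via the Feynman--Kac duality with the Kingman coalescent (Proposition~\ref{p.dual.ext}), combined with the separating property of Lemma~\ref{lem:Pi:separating} under moment bounds on the total mass. Your remark that this ``should follow from well-posedness'' passes over the actual argument. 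Your verification of Assumptions~\ref{a.6} and~\ref{a.7} and your additivity discussion are otherwise sound in spirit, but the proof does not close without the smooth truncation machinery and the duality-based uniqueness step.
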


\paragraph{A concrete consequence: expected sum of squares of subfamily sizes.}
At first sight,Theorem~\ref{T.TVF.BRAN.PROP} seems to be an abstract statement about an ``extended'' martingale problem on a complicated space.
However if we use Lemma~\ref{l.D_lin:tildeD} from the proof section we can obtain interesting statements, since we can obtain differential equations for expectations of moments where the rhs. involves lower order moments.
This allows in some case to obtain an {\em explicit solution} of such closed systems of equations. The following is an example.
There are much more explicit representations coming out of the martingale problem, in particular via duality relations, see here \cite{ggr_tvF14} for details. 

We consider the expected sum of the squares of the subfamily sizes.

\begin{theorem}[Moment recursion for Feller diffusion]\label{t.recurfeller}
\mbox{}\\ 
 For the $\U$-valued Feller diffusion and $t>0$, if $a \neq 0$:
 \begin{equation}\label{e1360}
 \E [ \Phi^{2,\1(r_{12}< 2t)} (\mfU_t) ] = b \bar{\mfu}_0 \frac{1}{a}\left( e^{2at} - e^{at} \right). 
\end{equation}
Note the rhs. is nothing else than the variance of the Feller diffusion on $\R^+$.
It is known that if $\mfU_t=[U_t,r_t,\mu_t]$ then $(\mu_t(U_t))_{t \geq 0}$ is the classical $\R^+$-valued Feller diffusion with parameter $b$, see \cite{ggr_tvF14}.
In the case $a=0$ the right hand side of \eqref{e1360} is replaced by $2b\bar{\mfu}_0$. \qed
\end{theorem}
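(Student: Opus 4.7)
The plan is to apply the extended (time-space) martingale problem provided by Lemma~\ref{l.D_lin:tildeD} to a carefully chosen time-dependent second order polynomial. Take $F(s,\mfu) := \Phi^{2,\phi_s}(\mfu)$ with $\phi_s(r) := \1(r<2s)$. Since $\phi_s$ is not $C_b^1$, first regularize by a traveling profile: fix $\chi_\epsilon \in C_b^1(\R,[0,1])$ with $\chi_\epsilon \equiv 1$ on $(-\infty,-\epsilon]$ and $\chi_\epsilon\equiv 0$ on $[0,\infty)$, and set $\phi_s^\epsilon(r) := \chi_\epsilon(r-2s)$, $F^\epsilon(s,\mfu) := \Phi^{2,\phi_s^\epsilon}(\mfu)$. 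Taking expectations in the corresponding martingale relation yields
\begin{equation*}
 m^\epsilon(t) := \E\bigl[F^\epsilon(t,\mfU_t)\bigr] = F^\epsilon(0,\mfU_0) + \int_0^t \E\bigl[(\partial_s + \Omega^\uparrow)F^\epsilon(s,\mfU_s)\bigr]\,\dx s,
\end{equation*}
with $F^\epsilon(0,\mfU_0)=0$ since $\chi_\epsilon\equiv 0$ on $[0,\infty)$ and all genealogical distances are nonnegative.

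The key observation is a \emph{wave cancellation}: because $\phi_s^\epsilon$ travels at speed $2$ in the variable $r$, one has $\partial_s\phi_s^\epsilon + 2\,\partial_r\phi_s^\epsilon \equiv 0$ pointwise, hence
\begin{equation*}
 \partial_s F^\epsilon(s,\mfu) + \Omega^{\uparrow,\mathrm{grow}}F^\epsilon(s,\mfu) = \Phi^{2,\,\partial_s\phi_s^\epsilon + 2\,\partial_r\phi_s^\epsilon}(\mfu) = 0.
\end{equation*}
This reflects the fact that pairwise distances lengthen at exactly the same speed $2$ at which the cutoff $2s$ advances, so the explicit time derivative is absorbed by the growth part of the generator. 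Only the branching piece survives; using that $\theta_{1,2}$ sends $r_{12}$ to $r_{11}=0$ and that $\chi_\epsilon(-2s)=1$ for $s\geq \epsilon/2$, a direct computation gives
\begin{equation*}
 \Omega^{\uparrow,\mathrm{bran}} F^\epsilon(s,\mfu) = 2a\,F^\epsilon(s,\mfu) + \tfrac{b}{\bar{\mfu}}\,\Phi^{2,\phi_s^\epsilon\circ\theta_{1,2}}(\mfu) = 2a\,F^\epsilon(s,\mfu) + b\,\bar{\mfu}.
\end{equation*}
Taking expectations and differentiating in $t$ then yields, up to an $O(\epsilon)$ contribution from the short interval $s\in[0,\epsilon/2]$, the linear ODE $\tfrac{d}{dt}m^\epsilon(t) = 2a\,m^\epsilon(t) + b\,\E[\bar{\mfU}_t]$. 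Since by~\cite{ggr_tvF14} the total mass $(\bar{\mfU}_t)_{t\geq 0}$ is the real-valued Feller diffusion with drift $a$, one has $\E[\bar{\mfU}_t] = \bar{\mfu}_0\,e^{at}$.

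Solving this ODE with $m^\epsilon(0)=O(\epsilon)$ via the integrating factor $e^{-2at}$ gives $m^\epsilon(t) = \tfrac{b\bar{\mfu}_0}{a}(e^{2at}-e^{at}) + O(\epsilon)$ for $a\neq 0$, and the corresponding limit $b\bar{\mfu}_0\,t$ for $a=0$. The remaining step is to pass $\epsilon\to 0$ and identify the limit of $m^\epsilon(t)$ with $\E[\Phi^{2,\1(r<2t)}(\mfU_t)]$. The main obstacle here is the boundary set $\{r_{12}=2t\}$: one needs that the random measure $\mu_t^{\otimes 2}$ puts asymptotically no mass there in expectation. A clean way to handle this is a double approximation --- replace $\1(r<2t)$ by $\1(r<2t-\eta)$ for $\eta>0$, apply the ODE argument above, and let $\eta\to 0$ \emph{after} $\epsilon\to 0$. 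The uniform bound $F^\epsilon(t,\mfU_t)\leq \bar{\mfU}_t^2$ and finiteness of $\E[\bar{\mfU}_t^2]$ (itself obtainable from the same type of ODE argument) together with monotone convergence then complete the proof.
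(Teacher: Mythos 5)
Your proposal is correct and follows essentially the same route as the paper: your traveling profile $\chi_\epsilon(r-2s)$ is exactly the paper's smooth truncation $\hat{\varrho}(r-2t)$ of Assumptions~\ref{a.1}--\ref{a.2}, your ``wave cancellation'' $\partial_s+2\partial_r$ is the paper's Lemma~\ref{l.tr3}, and the resulting closed ODE and monotone approximation of the open indicator are identical (the extra $\eta$-shift is unnecessary since the approximation increases to $\1(r<2t)$ from below). Note only that your $a=0$ value $b\bar{\mfu}_0 t$ is what the ODE actually yields (and is the $a\to0$ limit of \eqref{e1360}), whereas the theorem as printed states $2b\bar{\mfu}_0$.
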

\label{1075} 
So far we are not able to derive from the martingale problem other equations which form a closed system and allow for an {\em explicit} solution.
However the compensator of the martingale problem involves lower order expressions which gives some hope.

\subsection{Historical Branching Processes, evolving genealogies of super random walk and path-marked genealogies}\label{ss.histbranpro}

We consider now \emph{stochastic evolutions} of the \emph{genealogy} of populations distributed  in a {\em geographic space} (or type space) $ E $, more precisely a Polish space $ (E,r_E) $.
In addition we have an evolution of the marks, think of migration if marks are locations and of mutation if they are types.
Since the mechanisms are linear in a proper sense we can still hope for a generalized branching property.
This modification of the process requires the generalization of the setup of the previous subsection from the state $\U$ we have to pass to the state space of marked genealogies $\U^V$.

Here $ E $ is separable, complete, metric space which is locally compact as $ E= \Z^d $ or any other at most countably infinite abelian group with a metric or a  continuum for example $ E=\R^d $.
We also consider the case where we have further \emph{historical information} on the whole ancestral paths of the population as they evolve in time.

The \emph{key results} on the generalized branching property are on versions of the model initially motivating the present paper, namely enrichments of the classical \emph{super random walk} (sometimes called interacting Feller diffusions (see \cite{DG03} for example)).
In particular the \textit{genealogical super random walk} process, the \textit{ancestral path marked genealogical super random walk}  process and the \textit{historical process of super random walk}, which are \emph{Theorem~\ref{TH.SUPWALK}, Theorem~\ref{T.ACBRANCH}} and its \emph{Corollary}.

However we have first to define these processes and develop the ingredients for our framework which needs some effort.
We give here the concepts to handle $ E $ of the form mentioned even though we prove the main theorem assuming only that $ E $ is a countable abelian group, because the theory of such processes is for continuum space not yet well developed on the \emph{genealogical level} i.e. as $\U^V$-valued process with $V=E$ to be introduced below.
However as soon as we have settled the \emph{existence and uniqueness problem} of our process we can readily verify the generalized branching property via our criterion.

We discuss first of all three important and related processes which are spatial versions of the case discussed in Subsection~\ref{ss.trvalFD}, among which is the {\em historical process} or the {\em location-marked genealogies} of the \emph{super random walk} modeled as marked ultrametric measure spaces and show that their generators allow to read off the generalized branching property via our criterion and therefore get more and \textit{new} examples, in particular ones not satisfying the classical branching property. 

At the same time we introduce a third model (containing the above as Markovian functionals).
Namely we take up and generalize from \emph{historical processes}, see chapter 12 in \cite{D93}, the concept of \emph{ancestral paths} associated with individuals currently alive and combine this with the genealogy described by elements of $\U$.
Then we can define here a {\em (ancestral paths)- marked genealogy-valued}, precisely $\U^V$-valued, class of \emph{super  processes}.
This class contains as functionals {\em many} processes.
In particular all processes describing \emph{historical} and \emph{genealogical information} of an evolving population:
 location-marked $\U$-valued processes, $\U$-valued processes, historical processes, measure but also $ \R^d $ or $ \R $-valued processes for which the generator criterion for the branching property works very well.
However the case of ancestral path marked genealogies is conceptually and technically \emph{quite demanding}. 

\paragraph{Outline} 
We proceed now in four steps, first we develop the formal description of \emph{genealogical} and \emph{historical information}, second we rigorously define the involved stochastic  processes via well-posed martingale problems and thirdly we identify the ingredients of the formal framework fitting Theorem~\ref{T:BRANCHING} and finally we state in the fourth steps the main results on the generalized \emph{branching property}.
(For facts on relevant spaces of measures and Laplace methods see Section 3 and 4, 6, 8 of \cite{D93}).

\paragraph{Step 1: Genealogical and historical information in spatial population models.}\hfil\\
We begin by introducing the \emph{state space} of the involved processes systematically.

\paragraph{\em (1) Historical process}
The {\em historical process}, a process with values in {\em measures on paths}, was invented in \cite{DP91} to describe the \emph{ancestral paths} of the population alive at time $ t $ in a spatial critical branching process.
Here we assume that the population is observed from time $0$ on and no further information on the past does exist.

Namely every individual alive has a path associated with the migration in space from his birth on, before this birth times the birth time of his direct ancestor, etc. so that the ancestral path of a currently alive individuals gives the \emph{path of descent} (or ancestral path) and the motion through geographic space (or in type space under mutation).

\emph{(i) State space} \; 
That is if $ E $ denotes the geographic space, which has to be at least a Polish space (most frequently with a specified metric, this the setup in chapter 12 of \cite{D93}) one has an element of $ D(\R,E) $ by continuing the path as constant beyond time $ t $ and before time $ 0 $.
These paths merge of course and are in particular equal for all times before $ T $ if they belong to individuals descending from the same parent and $ T $ is the parents death time and the birth time of the two new descendants. 
All individuals are considered exchangeable if they have the same ancestral path.
Hence for time $t$ consider for the population currently alive simply the \emph{"counting measure" on the space of the ancestral paths} to describe the state of the population.

\emph{(ii) Initial states} \;
As initial state in this description one uses typically initially \emph{constant paths} so that the initial state corresponds to the locations of the individuals at the time $ 0 $ in $E$ in a unique way.
Note that in fact we have values in the closed subspace $ D_{0,t} $ where $ D_{0,t} \subseteq D $ are the \cadlag paths with values in $ E $ which are constant before time $ 0 $ and beyond $ t $.
This is a \emph{closed} subset of $ D $.
To get a dynamically closed set of states we need to use $ {\mathop{\cup}\limits_{t > 0}} D_{0,t} $.
Similarly call $ D_{0,\infty} $ the (topologically closed) set of paths constant for times $ \leq 0$.
In order to then discuss infinitely old populations and equilibria we need $ D_{-\infty,t} $ resp $ D_{\infty,\infty}=D $. 
In particular we do have that 
\begin{equation}\label{e1071}
D_{s,t} \mbox{  with } s,t \in \bar \R \mbox{  is a \emph{Polish state space for ancestral paths}.}
\end{equation}

Traditionally one describes the state of these ancestral lines by a {\em locally finite measure}, i.e. a $ \nu_t^\ast \in \mcM(D(\R,E))$ for a locally compact geographic space $ E $ (like $ \R^d $ or some countable set $ E $ like $ \Z^d $).
In the general setup we have measures which are bounded on bounded sets.
These measures generalize the \emph{empirical measures} of a finite population of ancestral paths at varying times and contains in particular the \emph{occupation measure} of the population by projecting the measure $ \nu $ on $ E $ by regarding from the path only the time-$t $ position.
This measure $ \nu_t^\ast $ {\em "counts"} the ancestral path having some specified features, in particular projecting on the current position gives the \emph{occupation measure} $ \nu_t^{\ast, \downarrow} \mbox{ on } E $ of the population at time $ t $.
Namely we define  
\begin{equation}\label{1368}
 \nu_t^{\ast, \downarrow} (A) = \nu_t^\ast (\lbrace v \in D(\R,E):v(t) \in A \rbrace),\quad  \forall A \in \mcB(E).
\end{equation}

Now we say $ \nu^\ast_t $ has the {\em "locally finite} property" if all its time-$t$ projections satisfy:
\begin{equation}\label{e1369}
 \nu_t^{\ast, \downarrow} (A) < \infty, \mbox{ for all A bounded respectively finite in the discrete case}.
\end{equation}

So we have altogether as \emph{historical process} the following  measure valued process:
\begin{equation}\label{e1331}
(\nu^\ast_t)_{t\geq0} ,\;  \nu^\ast_t \in \mcM(D_{0,t}(\R,E)).
\end{equation}

\begin{remark}\label{r.timein}
The process $X$, as for example the process in \eqref{e1331} has typically a time \emph{inhomogeneous} dynamics and therefore it is better to work with the \emph{time-space process} (i.e. $(t,X(t))_{t \geq0}  $ instead of $ X $) of the path process and on top of that also with the time-space process of the measure valued process i.e. we consider $ (\R \times D(\R,E)) $ respectively in (\ref{e1331}) $ \R \times \mcM (\R \times D_{0,t} (\R,E)) $ as state space and consider measures on that space together with another \emph{explicit time coordinate}.
This way one obtains a {\em time-homogeneous} Markovian dynamics. \qed
\end{remark}

The historical process, better its law in its general form, see \cite{DP91} can be described as solution to a Log-Laplace equation or for us more relevant as solution of a \emph{martingale problem} (see~\cite{D93}, chapter 12), in the form which is  specifying martingales for evaluations of the measure and giving their increasing processes.
We will use here for our purposes the more traditional form of the  (local) martingale problem for a given operator, which however is known to be  \emph{equivalent} to the above descriptions.

\paragraph{\em (2) Genealogical processes}
The object in (\ref{e1331}) describes the genealogy at least  implicitly, only if independent copies of the migration paths do {\em not agree on any positive interval}, so that the time point before which the ancestral path agree must be the exact birth point.
This problem can be avoided as follows, using the concept of {\em marked genealogy-valued processes} describing the genealogy of the population \emph{currently alive}, which also allows to formulate and prove the generalized branching property with our criterion.
In that concept it is possible to attach marks to the individuals as for example \emph{types} or \emph{locations}.
This concept has been introduced in \cite{DGP11} and has been successfully applied in the context of Fleming-Viot processes and their genealogies in \cite{GPWmp13},\cite{DGP12} and \cite{GSW} and we use it here for the branching world, see here for $\U$ or $\U^V$-valued processes of this form  also\cite{infdiv,ggr_tvF14}.

\emph{(i) State space: generalities} \; 
We consider more precisely a random variable $ \mfU $ with values a {\em equivalence class} of $ V-${\em marked ultrametric measure spaces}
\begin{equation}\label{e1332}
\mfU = [(U \times V,r \otimes r_V,\nu)],
\end{equation}
with $ (U,r) $ an ultrametric space, $ (V,r_V) $ is a metric space both complete and  separable and $ \nu $ is a Borel measure on $ \mcB ((U \times V, r \otimes r_V)) $ which is {\em boundedly finite} meaning $ \nu(U \times \cdot) $ is a boundedly finite (on bounded sets finite) measure on $ V $.
We denote by $\mu$ the measure on $(U,\CB(U))$ given by $\mu(\cdot)=\nu(\cdot \times V)$, which is increasing limit of finite measures on that space.
Here we have to allow measures $\mu$ now, which are infinite, since on an infinite geographic space we typically want to allow a population with infinite total mass which is only \textit{locally finite}.

Consider $(U \times V,r \otimes r_V,\nu)$ and $(U^\prime \times V,r^\prime \otimes r_{V^\prime},\nu^\prime)$.
Call a map $\phi:U \times V \to U^\prime \times V$ on \emph{mark and measure preserving isometry} if $\phi(u,v)=(\wt \phi(u),v)$ for all $u \in supp(\nu)$ and $\wt \phi$ is isometric between $supp(\mu)$ and $ supp(\mu^\prime)$ and measure preserving, i.e. $\phi_\ast \nu= \; \nu^\prime$.
The space of all equivalence classes, the latter denoted $[U \times V,r \otimes r_v,\nu]$, of $ V-$marked ultrametric measure spaces w.r.t. \emph{measure} and \emph{mark  and measure preserving isometries} of the support of $\mu$ of all \emph{restrictions} to points with marks in bounded sets, equipped with the {\em marked Gromov weak topology}, we denote 
\begin{equation}\label{e1039}
\U^V, \mbox{   which is a {\em Polish space}.}
\end{equation}
We may take equivalently any sequence of bounded sets exhausting the full space $ V $. 

See \cite{DGP11} and \cite{GSW} for the concept of {\em $ V-$marked} ultrametric measure spaces and basic topological facts. 
Roughly convergence amounts to  convergence of all equivalence classes of marked finite subspaces spanned by $ n $ points sampled according to $ \nu $.
(Alternatively we could say: convergence of all polynomials a concept we shall discuss in detail in \eqref{a.101}.)
Then define for $ \mu \in \CM(U) $ with $ \mu(A)=\nu (A \times V) $ the kernel $ \kappa $ by
\begin{equation}\label{e1334}
\nu = \mu \otimes \kappa,
\end{equation}
called the {\em mark kernel}.
This $\kappa$ arises in the special case where a mark {\em function} exists as $ \kappa (u,dv) = \delta_{\kappa(u)} (dv) $ for a measurable function $ \kappa $. 
The second marginal of $ \nu $ corresponding to $ V $ is denoted
\begin{equation}\label{e1335}
\nu^\ast.
\end{equation}

We want to choose the {\em current locations} or alternatively the {\em ancestral paths} as marks:
\begin{equation}\label{e1333}
V = E \: \mbox{    or    } \; V = D(\R,E).
\end{equation}
The latter choice embeds then the historical process in the genealogical process on $\U^V$ which contains then the combined information of the $\U^E$ and the $\CM (D(\R,E))$-valued process.

In \eqref{e1333} the first case is easier to handle (compare \cite{GSW} for the Fleming-Viot process in that case and \cite{ggr_tvF14,infdiv} for the branching case). 
For the second case we need some further \emph{preparation} we focus on next, a reader only interested in the more classical situation, the first case (genealogical super random walk) might move on to the next step.
Nevertheless, this second example is the highlight, despite the technicalities necessary.

\emph{(ii) State spaces: Path-marked genealogies} \;
Indeed the spatial branching process of \cite{DP91}, the so called \emph{historical process}, can be extended to a $ \U^V$-valued process $ \mfU $ containing most of the relevant genealogical \textit{and} historical information (see the discussion below) and then this $ \nu^\ast $ of \eqref{e1335} is the measure state and with it we get a version of the historical process, formulated in the {\em "classical"} way, i.e. for the measurable $ \Psi $ on $\U^V$ arising from projecting an element of $U \times V$ on $V$ and hence $\nu$ on $\nu^\ast$, set:
\begin{equation}\label{e1336}
(\nu_t^\ast)_{t \geq 0} = \Psi((\mfU_t)_{t\geq 0}).
\end{equation}

We formulate now a setup in which 
to apply the martingale problem techniques (see Remark~\ref{r.timein}). Introduce as mark space $V$:
\begin{equation}\label{e1104}
D=\mathop{\bigcup}\limits_{t \in \R} D_{[t,\infty) } \quad, \quad D^+= \mathop{\bigcup}\limits_{t \in [0,\infty)} D_{0,t} (\R,E).
\end{equation}
We use parallel to the historical process also his time-space process (recall Remark~\ref{r.timein}), hence also as mark resp. state space:
    \begin{equation}\label{e1177}
    V=\R \times D \mbox{  or  } V=D^+ \mbox{  resp.  } \R \times \U^V.
    \end{equation}
Note that we pass here both on the level of the ancestral path \textit{and} on the $ \U^V- $valued process to the time-space process because we want on both levels {\em time-homogeneous} processes (recall Theorem~\ref{T:BRANCHING} is for time-homogeneous processes).
    In that case we choose as marks in points with the explicit time component $ t $, paths in $ D_{0,t}(\R,E) $, which are constant after time $ t $ and before time $ 0 $.

\paragraph{Step 2: The class of genealogical and historical models: Formal construction of $ \U^V$-valued super random walk}\hfil\\
We focus now on two cases, namely $ V=E $ and $V=D$ where $E$ is a metric space, which is Polish.
But we restrict $E$ to be a countable abelian group such that we can define {\em random walks}, which allows simplifications in construction and more is known about populations in such geographic spaces.
The basic process is now the {\em $\U^E$-valued super random walk on $ E $}, which we now introduce in $(1)$ before we come to $(2)$ to the case $V=D$ and in $(3)$ to an alternative path-valued process.
As preparation we begin with the classical case.

\paragraph{\em (0) A classical spatial process.}
The classical \emph{super-random walk} is the following system of interacting diffusions $ X=\Big((x_i(t))_{i \in E} \Big)_{t \geq 0} $ with parameter $ b>0 $ and $ a(\cdot,\cdot) $ a transition probability on $ E \times E $ and  state space contained in $ [0,\infty)^E $:
\begin{equation}\label{e1321}
dx_i(t)=\suml_{j \in E} a(i,j)(x_j(t)-x_i(t)) dt + \sqrt{bx_i(t)} \; dw_i(t) \quad, i \in E, 
\end{equation}
where $ E $ is embedded in a continuum group $ E^\prime $, which is Polish and with $ a(.,.) $ a transition probability on $ E $ some {\em discrete abelian group}, for example $ E=\Z^d, E^\prime = \R^d $.
See \cite{DG96}, \cite{GKW02} for construction and properties of this process.
Recall that in the approximating individual based model individuals migrate from $ i $ to $ j $ with probability $\bar a $ instead of $ a(\text{here  } \bar a (\xi,\xi^\prime) = a(\xi^\prime, \xi)) $. 

The associated {\em marked genealogy-valued}, i.e. $ \U^V$-valued, dynamic has to be defined below rigorously by a {\em well-posed martingale problem} which can be show to arise as a \emph{scaling limit} of the genealogy of a branching random walk on $ E $, equipped with the appropriate marks in $E$ resp. $D$, the walk with transition rate $ a(\cdot,\cdot) $ with critical branching with many individuals-small mass and rapid branching.

\begin{remark}\label{r.contvers}
It is also interesting to consider {\em continuum space} versions of the above process, as the Dawson-Watanabe process. 
The Dawson-Watanabe process as usual arises as spatial continuum limit from the systems indexed by scaled versions of $ E $, for example $ E^\prime = \R^d $ and $ E= \varepsilon \cdot \Z^d $, with $ \varepsilon \rightarrow 0 $, compare Section~\ref{ss.outquest} part {\em (iii)} for a more detailed discussion of the arising problems. \qed
\end{remark}

If the underlying geographic space $ E $ is finite we work with finite measures. 
If however $ E $ is countable it is more natural to work with measures which are only {\em locally finite}.
In that case the construction of the process of total masses (per site population sizes) via the system of SDE's in \eqref{e1321} already requires to restrict the state space to configurations, where the local mass cannot explode in time due to the flow of migration.
The appropriate tool here is the so called {\em Liggett-Spitzer space}, a subset of $ [0,\infty)^E $ of the form

\begin{equation}\label{e1373}
\{\overline{x} \in [0,\infty)^E \mid \suml_{i \in E} \overline{x}_i \gamma_i < \infty \}
\end{equation}
for some summable positive $ \gamma $ satisfying $ a \gamma \leq M \gamma $ for some $ M< \infty $.

On this space the solution of the SSDE can be constructed and any random configuration which is translation invariant with $ E \mid \overline{x}_0 \mid < \infty $ 
is almost surely in this space, regardless which $ \gamma $ we choose, and furthermore the solutions of the SSDE have almost surely \emph{path} which take values in the Liggett-Spitzer space for all $ t \geq0 $.
We do not discuss this in more detail here, the reader may look at \cite{LS81}, \cite{DG96} and \cite{GLW05} for details.

\paragraph{\em (1) The $\U^E$-valued super random walk.}
We have to discuss now state spaces, test functions and operators.

\emph{(i) Marked genealogies of super random walk and their state space.}
For this process above we want to construct now the \emph{genealogy} of the individuals alive at a given time $t$ together with information on \emph{locations}, which is described as an element of $\U^V$.
Primarily this is a $E$-marked genealogy $(V=E)$ which we have to define.

 The next point needed are the genealogies which requires the extension of the theory of processes in {\em marked} ultrametric measure spaces from the state space of the previous subsection.
Here we consider as marks on the genealogy the \emph{current location} of the individual which is changing due to migration or more generally its \emph{path of descent}, here the marks evolve according to the {\em path process}, see \cite{D93}.
Processes on that space of marked genealogies have been introduced in \cite{DGP12}, for the case of finite sampling measures, we explain the basic concepts and facts we need here.
A further point is to allow on infinite sets infinite population sizes in the form of {\em boundedly finite} measures.
This works by considering the localization of the population to ones with marks in bounded subsets of the geographic space, defining the equivalence classes w.r.t. to all the restrictions and defining the topology by defining convergence by the convergence of all localizations to finite (bounded) sets. 
For detail we refer the reader to \cite{GSW} where also a class of genealogical processes is introduced, different from ours though, namely for the spatial genealogical Fleming-Viot process.

The $ \U^V$-valued process is constructed as solution to a \emph{well-posed martingale problem}.
The martingale problem of the $\U$-valued process is treated in \cite{ggr_tvF14} together with its spatial $\U^E$-valued version in all detail for a survey see also \cite{DG18evolution}.

As \emph{starting points} for our dynamic we will allow states of a special nature by requiring that if $[U \times V,r,\nu]$ is such a state, then the restriction of $ \nu $ to $ V $ satisfies \eqref{e1373}, for $ V=E $ and for the case of path that the further projection of the path onto the time $ t $ position satisfies this relation.
This subset of $ \U^V $ satisfying~\eqref{e1373} is called then $\CE $, a set defined by requiring the property of the projection of $  \nu $ on $ V $ that is supported by the measurable subset.

\emph{(ii) Test functions, generators and the genealogical processes.} 
We specify now the {\em generator} of the martingale problem and its \emph{domain}.
The basis is the operator we had in \eqref{e1305}-\eqref{eq:theta} but we have to \textit{lift} it to the marked case and we have to add the migration part giving the {\em dynamics of the marks} by specifying the \emph{generator}.\\
\emph{(a) Test functions.}
The first step is to generalize the concept of a polynomial to cover marks. 
Choose now a function $ \chi : V^n \rightarrow \R, \; \phi \in C_b(V^n, \R) $ and consider {\em the monomial}:

\begin{align}\label{a.101}
&\Phi^{n,\phi,\chi} \left(\left[U \times V,r,\nu \right]\right) =&\\
&\int\limits_{(U \times V)^n} \phi \left(\big(r(x_i,x_j)\big)_{1 \leq i < j \leq n} \right) \;  \chi \left(v_1,v_2, \ldots, v_n \right) \; \nu \left(d(x_1,v_1), \ldots \nu \big(d(x_n,v_n)\big) \right),& \nonumber
\end{align}
where it suffices to consider $ \chi $ of the form (these are still {\em separating})
\begin{equation}\label{e1178}
\chi (v_1,\cdots, v_n)=\chi^1 (v_1) \cdots \chi^n (v_n).
\end{equation}
Then consider the generated algebra the \textit{polynomials}, denoted $ \Pi_E $ see \cite{DGP11}, \cite{GSW}.

The simple case is where $ V $ is simply the geographic space then $ V=E $ and $ \chi^k: E \rightarrow \R $.
Then we can consider again {\em Laplace functionals} $ exp(-\Phi^{n,\phi,\chi}) $ for $ \phi, \chi \geq 0, n \in \N$.
In this marked case where $ E $ is not a finite (bounded) set and where we work with populations which are not necessarily finite but are only \textit{locally finite} we take $ E_m \uparrow E, E_m $, finite (bounded) and restrict the $ \chi $ to be of finite (bounded) support.
We work with $ \chi^k $ of the form that it specifies a single site of observation, i.e.
\begin{equation}\label{e1264}
\chi^k(\cdot) = 1_{\{\xi^k\}} (\cdot) , \xi^k \in E \mbox{  and write for  } \chi \mbox{  in that case  } \chi^{\underline{\xi}} \mbox{  for  } \underline{\xi}=(\xi^1, \cdots, \xi^n) \in E^n.
\end{equation}
Then the polynomial $ \Phi $ depends only on the population in a finite number of sites in $ E $.
Then we can take the generated algebra and have a separating set (\cite{GSW}).
The reader only interested in the super random walk $V=E$ may skip the next point and continue directly with the dynamics.\\
{\em (b) The dynamics and the operator.} 
Next the action of the {\em operator}. 
The growth and branching operator from \eqref{e1307} and \eqref{e1308} act now as follows. 
The \emph{growth operator} acts only as before on $ \phi $ and the \emph{replacement operator} the same but only if the marks of the two chosen points have the {\em same current locations}, i.e. are marked with path with the \emph{same}  current {\em site}.

The branching operator $\Omega^{\uparrow, \rm bra}$ however is now a \emph{sum} of operators $ \Omega_\xi^{\uparrow, \rm bra} $ acting on the population at the site $ \xi $, the sum over $ \xi \in E$.
We have
\begin{equation}\label{e1265}
\Omega_\xi^{\uparrow, \rm bra} \Phi^{m,\phi,\chi} = \frac{2b}{\overline{\mfu}_\xi} \; \Phi^{m,\phi^\prime, \chi}, \mbox{  with  } \phi^\prime = \suml^m_{k,i=1 \atop k \neq i} \; (\theta_{k,i} \; \phi - \phi) 1_{\{v_i=v_k=\xi\}}.
\end{equation}
Note that our polynomial depends only on a \emph{finite} number of sites so that the sum is well defined.

Also the operator $ \Omega^{\uparrow, \rm bra}_\xi \Phi^{m,\phi, \chi} $ is only non zero for such $ \xi \in \{\xi^1, \ldots, \xi^{n(\chi)}\} $ which specify $ \chi $ as in \eqref{e1264} and \eqref{e1322} such that $ \bar \mfu_{\xi^ k} \neq 0$.

Note also at this point that now for $ \overline{\mfu}_\xi=0 $ for some $ \xi $ on which the polynomial depends we have a {\em singularity} and we set then $ \Omega_\xi^\uparrow \Phi^{m, \phi, \chi}=0 $.
Observe that this occurs now even if the population as a whole is \emph{not} yet extinct.
This requires restrictions on the test functions we can use in the martingale problem.
Here we refer for the technicalities also to \cite{DG03}.

Next we need the operator of the mark evolution the \emph{migration} operator. 
We need here that the path of marks arises from a Markov process $ (Y(t))_{t \geq 0} $ on $ E $ solving a {\em well-posed martingale problem}, with operator $ A $.
For example for the branching random walk the migration rate $ \bar a $, where $ \bar a(v,v^\prime)=a(v^\prime,v) $ for $ v,v^\prime \in E $ we have:

\begin{equation}\label{e1323}
(Af)(v)=\Big(\suml_{v^\prime \in E} \bar a(v,v^\prime) f(v^\prime)\Big)-f(v),\quad v \in E, f \in b\CB(E,\R),
\end{equation}  
representing the motion of individuals on $ E $.

We define the operator $ \Omega^{\uparrow,\rm mig} $ describing the evolution of the marks driven by the migration defined by $ A $ above and which acts on the polynomial $ \Phi^{\phi,\chi} $ as follows:
\begin{equation}\label{e1179}
\Omega^{\uparrow, \rm mig} \Phi^{n,\phi,\chi} = \suml_{k=1}^n \; \Phi^{n,\phi,A_k \chi} \; \; , A_k \chi = \Big(\prod_{i \neq k} \; \chi^{\xi^i} \Big) A \chi^{\xi^k}.
\end{equation}

Then summing the {\em migration operator} in \eqref{e1179}, the {\em growth operator} and the {\em resampling operator} $\Omega^{\uparrow,\rm bra}$ (see \eqref{e1265} and above)  results in an operator
\begin{equation}\label{e1180}
(\Omega^{\ast, E}, \Pi_E).
\end{equation}
\begin{definition}[$\U^E$-valued super random walk]\label{def.1332}\hfil\\
The well-posed $(\Omega^{\ast,E},\Pi_E,\mfu)$-martingale problem (see the non-spatial case and the spatial case in \cite{ggr_tvF14} for the wellposedness result) specifies for $\mfu \in \U$ with $\bar \mfu \in \mathcal{E}$ a process :
\begin{equation}\label{e1181}
(\mfU^{\ast, E}_t)_{t \geq 0}
\end{equation}
the {\em $\U^E$-valued super random walk on $ E $.} \qed
\end{definition}

\textbf{\em (2) The (ancestral path)-valued case: $D^+$}
Next we have to focus on the case of {\em ancestral path as marks}.
Recall here \eqref{e1104}.
We will give here first the process using as ancestral path elements in $D^+$ which are the ones directly given by as by the dynamics naturally and will only later introduce the ones marked with $D^\ast$ using the knowledge of the $D^+$ case.
First we discuss the test functions for path-valued marks, then the dynamics and operator.

Here we have to begin by introducing the test functions on $ V $ which is in our context a set of {\em $ E $-valued \cadlag path}.\\
\emph{(i) Test functions}\; 
Here we have the time-inhomogeneous and time-homogeneous case.\\
(a) The test function $ \chi^k $ on the rhs. of \eqref{e1178} each  evaluate the positions of the path at a tuple of time points and will be of the form of a product of functions on $ E $ which are applied to the position of the path at a specific time giving now a tableau of functions representing the $ \chi^i $ of \eqref{e1178}:
\begin{equation}\label{e1311}
\chi^{\uuxi}(v) =\prod^n_{i=1} \; \prod^{m(i)}_{k=1} \chi^{\uuxi}_{i,k} \left(v(t_k^{(i)})\right) \; , \; v \in D(\R,E),
\end{equation}
for some $n \in \N  $ and for $k=1,\cdots,m(i)$ we have $ 0 \leq t_1^{(i)}<t_2^{(i)}< \ldots < t^i_{m(i)}< \infty,$ where next $ i=1,\cdots,n $. The $ \chi^{\uuxi}_{i,k} $ are again indicators as in \eqref{e1264}.
Here $ \underline{\xi} $ from above becomes $ \uuxi=(\xi^i_k)_{i=1,\cdots,n \atop k=1, \cdots, m(i)} $.

Note that $ \chi^{\uuxi} $ above is {\em not} in $ C_b(D(\R,E))$
as required in the basic setup of $ \U^V $-valued martingale problems where we take usually functions from $ C_b(\U^V,\R) $  (fitting the ones in the definition of the topology), but here we have functions being only in $ b\mcB(\U^V,E)) $.
This technical point we have to tackle later in the proof section by giving either an equivalent martingale problem on continuous functions via a moving average of the present test functions or we have to use special properties of the path which are charged by the law.
\begin{remark}\label{r.1244}    
	One could try to work with occupation time integrals of the form:
	\begin{equation}\label{e1311a}
	\chi^{\uxi, \uF}(v) =\prod^M_{m=1} \chi_m(v) = \prod^M_{m=1} \; \int_\R \dx s F_m(s) \1_{\xi_m}(v_s)
	\end{equation}
	for $\xi_m \in E, F_m \in C^1(\R) \cap L^1(\R)$, $m=1,\dotsc, M$.
	This however does not fit together easily with the martingale problem for $ V $ which is the one which allows to access easily and directly important information about the f.d.d. of the ancestral path. 
	We will use this idea in a different form namely once we work with truncations and we have to smoothen.
\end{remark}
(b) For the \emph{time homogeneous} set up of the path process (i.e. the time-space process) we consider
\begin{equation}\label{e1322}
\wh \chi^{\uuxi}(t,v) = \prod^n_{i=1} \; \prod^{m(i)}_{k=1} \wh \chi^{\uuxi}_{i,k} \left(t,v(t \wedge t_k^{(i)})\right),
\end{equation}
with $ \wh \chi^{\uuxi}_{i,k}(t,v) = \Psi(t) \; \Psi^{\uuxi}_{i,k}(t) \chi^{\uuxi}_{i,k}(v) $ and $ \Psi(t) $ and $ \Psi_{i,k}^{\uuxi} \in C^1_b(\R, \R)$ as the functions in (\ref{a.101}) to generate polynomials. Call the set of these polynomials $ \wh \Pi $.

We continue now the discussion of path-marked processes, with giving the dynamics.

\emph{(ii) Dynamics and operator} \;
Here we proceed as in the point $(1)$ on $V=E$ except now that the operator of the mark evolution given in \eqref{e1323}-\eqref{e1180} has to be replaced by a new operator and in particular the test function $\chi$ from \eqref{e1322} instead of \eqref{e1264} has to be used.

Here we proceed in several steps. 
First introduce the operator which describes the change in the path process, then secondly based on this we define  the operator of the mark evolution of the $ \U^V $-valued process with $V=D(\R,E)$ acting on the polynomial $ \Phi $.
The evolution of marks, i.e. of the ancestral path as time evolves is driven by an evolution of an element of $ D(\R,E) $. 
This Markov process is called the {\em path process}.
This process was introduced, for example in Section 12.2.2. in \cite{D93}. 
This evolution is time \emph{inhomogeneous}.
    
    The next step is to write down the generator of the path process. 
    Since already the path process $ Y $ is time-inhomogeneous (recall the path is $ \R- $indexed), we pass first on that level to the {\em time-space process} and need test functions on $ \R \times E $ rather than just $ E $.
    
    Recall the generator of the motion process of a single individual $(Y(t))_{t \geq 0}$ was called $ A $.
    For the process $ Y $ the time-space process $ (t,Y(t))_{t \geq 0} $ then has generator $ \wt A = \frac{\partial}{\partial t}+A $. 
    The corresponding \emph{path process generator} $ \wh A $ acts (see Section 12.2.2 in \cite{D93}) on $ \chi $ of the form \eqref{e1322} for $ t_k \leq s < t_{k+1} $:
    \begin{equation}\label{e1326}
    \wh A \wh \chi(s,v)= \prod_{\ell=1}^k \wh \chi_\ell \left(s,v(s \wedge t_\ell)\right) \wt A \left(\prod_{\ell=k+1}^m \wh \chi_\ell \Big(s,v(s)\Big)\right) \; \mbox{and gives $ 0 $ for $ s>t_m $.}
    \end{equation}
        This operator specifies a well-posed martingale problem on the spaces $ D([0,\infty),\R \times D(\R,E)) $ (Section 12.2.2 in \cite{D93}).
    In a second step this has to be then lifted to an operator on the polynomials on $ \U^V$, where it acts via the action of the path process generator on the function $ \chi $ appearing in the polynomial.
    
    Now turn to the $\U^D$-valued super random walk, the process $\mfU^{\ast, \rm anc}$.
    In order to get a time-homogeneous Markov process we pass to the state space
    \begin{equation}\label{e1229}
    \R \times \U^D \mbox{  and polynomials in  } \wh \Pi=\Pi_D,
    \end{equation}
    which are products of a function $ \psi \in C_b(\R,\R) $ and a polynomial as in \eqref{a.101} with $\chi$ as in \eqref{e1322}.
    
    The operator $ \Omega^{\uparrow, \rm anc} $ corresponding to the change of the marks now acts on the polynomial $ \Phi $ as follows.
    Namely denoting this operator of the mark evolution of $ \mfU $ by $ \Omega^{\uparrow, \rm anc} $ we have for each sampled marked individual the action of the path process generator $\wh A$ but now acting on the corresponding factor $\wh \chi^k$:
    
    \begin{equation}\label{e.1157}
    \Omega^{\uparrow, \rm anc} \Phi^{\phi, \wh \chi} = \suml_{k=1}^n \; \Phi^{\phi, A^\ast_k \wh \chi} \; , \text{  where  } A^\ast_k \wh \chi
    =: \Big(\prod^n_{\ell=1 \atop \ell \neq k} \; \wh \chi^\ell \Big) \wh A  \wh \chi^k.
    \end{equation}

Taking the sum of the \emph{mark} operator $ \Omega^{\uparrow, \rm anc} $ and the adapted and {\em lifted} (from $ \U $ and then from $ \U^V $ to $ \R \times \U^D $) \emph{growth} and the \emph{resampling} operator from point $1$ then gives immediately
\begin{equation}\label{1370}
(\Omega^{\ast, \rm anc}, \wh \Pi).
\end{equation}

We now have to specify precisely the possible \emph{initial states} for starting times $s$ for our martingale problem.
    We choose here the subset of $ \U^D $ which is characterized by a further restriction namely having marks only on a set $ \wt V $ (i.e. $ supp (\mu) \subseteq U \times \wt V $):
    \begin{equation}\label{e1248}
    \wt D = \mathop{\bigcup}\limits_{- \infty < s \leq t < \infty} \{t\} \times D_{s,t} \mbox{  or even  } \mathop{\bigcup}\limits_{s \in \R} \{s\} \times D_{s,s}.
    \end{equation}
    The corresponding subset of $ \U^{\wt D} \cap \CE $ is then called $ \wt \CE $.
    
    We have to show now that the martingale problem on the set of test functions above is \emph{well-posed}.

\begin{proposition}[Existence and uniqueness of the $\U^D$-valued super random walk]\label{prop.1426}
\hfill\\
The $(\wh \Omega^{\ast,\rm anc},\wh \prod, \mfu)$-martingale problem is well-posed.
\end{proposition}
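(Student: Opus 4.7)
The plan is to establish existence and uniqueness separately, leveraging the well-posedness of the $\U^E$-valued super random walk from Definition~\ref{def.1332} (see \cite{ggr_tvF14}) and the well-posedness of the path process with generator $\wh A$ from Section~12.2.2 in \cite{D93}.

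For \emph{existence}, I would construct a $\U^D$-valued process by enriching the $\U^E$-valued super random walk with ancestral-path information. Starting from the natural individual-based approximation (a critical branching random walk on $E$ with per-particle mass $1/N$, rate-$N$ branching, and migration at rate $\bar a$), each particle carries the full trajectory of its ancestral lineage as a mark in $D_{0,t}$, and the distance between two sampled individuals is twice the time back to their most recent common ancestor. Taking the limit $N\to\infty$, tightness on $D([0,\infty),\R\times\U^D)$ follows from Jakubowski's criterion applied to the generating polynomials $\wh\Pi$: the growth/branching components are controlled exactly as in \cite{ggr_tvF14}, while tightness of the path-marked coordinates reduces to the tightness of the empirical ancestral-path measures, a well-known input from \cite{DP91}. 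Any weak limit solves the $(\Omega^{\ast,\rm anc},\wh\Pi)$-martingale problem because the growth and branching operators carry over from the spatial $\U^E$-valued case and the $\Omega^{\uparrow,\rm anc}$-part arises exactly from $\wh A$ acting on each sampled factor $\wh\chi^k$ in \eqref{e.1157}.

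For \emph{uniqueness}, my plan is a duality argument. The dual is a spatial Kingman-type coalescent $(\eta_t,\mbfp_t)_{t\geq0}$ where $\eta_t$ is a finite partition of $\{1,\dots,n\}$, with each block labelled by a backward-in-time path in $D$; between coalescence events, pairwise distances grow linearly at rate $2$ and paths within each block evolve according to the forward path-process generator $\wh A$ acting on the corresponding $\wh\chi$-factor, while two blocks currently located at the same site $\xi$ merge at rate $2b/\bar\mfu_\xi$ with the two paths concatenated at the merge time. The standard duality calculation then shows
\begin{equation}\label{e.dual.sketch}
\E_{\mfu}\bigl[\Phi^{n,\phi,\wh\chi}(\mfU^{\ast,\rm anc}_t)\bigr]
=\E^{\rm dual}\!\left[\bigl\langle\mfu^{\otimes|\eta_t|},\phi_t\otimes\wh\chi_t\bigr\rangle\right],
\end{equation}
where the right-hand side depends on the law of $\mfU_0=\mfu$ only through its distance and mark distributions. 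Since $\wh\Pi$ is measure- and distance-determining on $\U^D$ (the marked Gromov weak topology, cf.~\cite{DGP11,GSW}), this fixes the one-dimensional marginals and, by the Markov property inherited from the dual, the entire law on $D([0,\infty),\R\times\U^D)$.

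The main obstacle is \emph{technical}, not conceptual, and has two facets. First, the factor $1/\bar\mfu_\xi$ in \eqref{e1265} is singular precisely on configurations where the local mass vanishes; the remedy, already used in \cite{ggr_tvF14,DG03}, is to restrict $\wh\Pi$ to polynomials whose $\chi$ is supported at sites with positive local mass (setting $\Omega_\xi^\uparrow\Phi=0$ on the null set otherwise) and to enforce initial states lying in $\wt\CE\cap$ Liggett--Spitzer space \eqref{e1373} so that finite-site projections of the population remain finite and non-extinct on compact time intervals with positive probability. Second, the test functions $\wh\chi$ in \eqref{e1322} are built from indicators and hence do \emph{not} live in $C_b(D(\R,E))$, so the martingale problem as stated is formally outside the standard continuous framework; the cleanest resolution, echoing Remark~\ref{r.1244}, is to first prove well-posedness on a set of smoothed test functions obtained by convolving each indicator $\1_{\xi}$ with a small time-occupation average $\int F_m(s)\1_{\xi}(v_s)\,\dx s$, and then recover the indicator test functions by a monotone-class/closure argument using the \cadlag regularity of the path-process trajectories, exactly as in Sections~12.2--12.3 of \cite{D93}.
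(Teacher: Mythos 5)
Your overall strategy --- duality for uniqueness, individual-based approximation for existence --- is the same as the paper's, but both halves have a concrete gap. On the uniqueness side, your dual is mis-specified: a dual process must evolve autonomously, yet you let two blocks at a site $\xi$ merge at rate $2b/\bar\mfu_\xi$, a quantity read off from the state of the \emph{forward} process, so the generator identity cannot close. In the paper the factor $1/\bar\mfu_\xi$ in \eqref{e1265} disappears because $\phi\circ\theta_{k,l}$ no longer depends on the $l$-th sample, so the superfluous integration against $\mu_\xi$ contributes a factor $\bar\mfu_\xi$ and the polynomial drops one degree; the coalescent then merges pairs sharing a site at a \emph{constant} rate, and the leftover $-\binom{\# p}{2}$ from the coalescent generator is compensated by a Feynman--Kac potential $\beta_{s,t}=\int_s^t\sum_{\xi}\binom{\# p_{u,\xi}}{2}\,\dx u$ (see \eqref{e2156}, \eqref{a2522} and Lemma~\ref{l.2401}). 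Your proposed duality formula carries no such exponential factor, so even with the correct rates it is false as written. Passing from one-dimensional to finite-dimensional uniqueness also requires the time-space coalescent with frozen particles \eqref{e2463} and the activation subtlety of Remark~\ref{r.2630}; ``the Markov property inherited from the dual'' is not by itself an argument, although the standard Ethier--Kurtz step does apply once one-dimensional uniqueness is available for all admissible initial states.

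On the existence side you defer the real difficulty to a ``monotone-class/closure argument'' taking you from smoothed occupation-time test functions back to the indicator-based $\wh\chi$ of \eqref{e1322}. That is precisely the step the paper identifies as the obstruction: a solution of the original martingale problem solves the smoothed one, but \emph{not} conversely, and weak limits of the particle systems are a priori only seen to satisfy the martingale relation for continuous test functions. The deeper issue, which your sketch does not touch, is the coupling of genealogy and marks: the branching operator acts on $\chi$ through duplication of an entire ancestral path, so knowing the $\U^E$-marginal (from \cite{ggr_tvF14}) and the historical marginal (from \cite{DP91}) separately does not identify the joint compensator. The paper circumvents this by constructing the limit explicitly --- building the conditional law of the marks given the genealogy from independent $\bar a$-random walks run along the edges of the $(t-h)$-trunks of the $\R$-tree representation, checking consistency as $h\uparrow t$, assembling the finite-dimensional distributions, and only then verifying from the construction that the resulting process solves the stated martingale problem. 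Without either that construction or a genuine proof that your limit points satisfy the martingale identity for the discontinuous test functions, the existence step is incomplete.
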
    
     
    Here some technical points have to be addressed in case of an {\em infinite} geographic space in particular the potentially infinite total mass of the measure $\nu$ on $U \times D$.
    We address this in Section~\ref{ss.extmc} in more detail as well as the martingale problem establishing wellposedness.
    We now have made sense of what we mean by the ancestral path marked process.
 
\begin{definition}[$\U^D$-valued super random walk]\label{def.1393} \hfil\\
    The solution of the well-posed $(\Omega^{\ast, \rm anc},\wh \Pi,\mfu)$-martingale problem, the $\U^D$-valued super random walk, is denoted
    \begin{equation}\label{e1167}
    (\mfU_t^{\ast, {\textup{anc}}})_{t \geq0}.
    \end{equation} \qed
 \end{definition}
 
\begin{remark}\label{r.diffemark}
We note that this process is different from the $ E $-marked ultrametric Feller process who records genealogy and {\em current} position, which is a Markovian functional of $\mfU^{\ast,\rm anc}$. 
We shall see later how we can verify for this process as well the generalized branching property, similarly for the historical process. 
This will in particular show how these various processes and their $ (T_t)_{t \geq 0}, (S_t)_{t \geq 0}$ and $(\sqcup^t)_{t \geq 0}$ are related. \qed
\end{remark}    

\textbf{\em (3) Pathmarked case: a sufficient variant of the standard state description via $D^\ast$} \; 
We have to specify state space, test functions, operators and processes.

\emph{(i) State space} \;
In this setup for the version we formulate next we can then show the {\em generalized branching property} since we use a more suitable concept of ancestral path which we introduce next.
Namely we have to \emph{modify} the description of the genealogy and mark a bit further to fit our framework of Assumption~\ref{a.6} and of the Theorem~\ref{T:BRANCHING} to be applied here, but in which we are \emph{keeping the really wanted information described above}.
Recall Theorem~\ref{T:BRANCHING} requires a \emph{time-homogeneous}  process and a semigroup setting.

The idea is that the ancestral path describes the situation looking back from the presently living individual and hence in particular  positions  \emph{relative} to this current position give the interesting informations. 
     We define therefore the set of path providing the desired information as follows:
     \begin{equation}\label{e1121}
      D_{-t, 0}(\R, E) = \{ v \in D(\R, E) | v(s) = v(-t) \, \forall s < -t, \ v(u) = v(0) \; \forall u > 0 \}, \ t \in [0,\infty]
     \end{equation} 
     and
     \begin{equation}\label{e1125}
\wh {D}^\ast = \bigcup_{t \in [0, \infty)} D_{-t, 0 } 
     \end{equation}
     as paths which are constant before a specific time $-t$ and after time $0$.
To obtain a Polish mark space we have of course to take 
\begin{equation}\label{e1146}
D^\ast \mbox{  as the closure of } \wh D^\ast.
\end{equation}
The last space $D^\ast$ defines now the \emph{mark space} of interest and the resulting state space for the generalized branching property is 
     \begin{equation}\label{e.Dast}
       \U^{D^\ast}.
     \end{equation}

\emph{(ii) Test functions} \;
The $ V=D^\ast$-set up is handled as follows.
This process arises for us later on as a functional of the process in (2) via a map $ \mfR $ in  \eqref{e1128} and \eqref{e1132}.
In order to specify a martingale problem for this process which is again Markovian we have to begin by introducing the test functions on $ V $ which is in our context an {\em $ E $-valued \cadlag path} constant after time $ 0 $ and before time $ -t $.
This means we get functions as \eqref{e1311} but with $ t_1^{(i)} < t_2^{(i)} < \ldots < t_{m (i)}^{(i)} \leq 0, i=1,\ldots,n $.
The test function $ \chi^k $ on the rhs. of \eqref{e1178} each  evaluate the positions of the path at a tuple of time points and will be of the form of a product of functions on $ E $ which are applied to the $ (-t)$-shifted version of the function $ \chi^{\underline{\underline{\xi}}} $ from above at time $ t $, which means that we work with a function $\wt  \chi^{\underline{\underline{\xi}}} $ evaluating $ D_{-\infty,0}$-functions at fixed times and locations, independent of the current time $ t $ as in \eqref{e1311}.

\emph{(iii) The $D^\ast$-marked genealogical super random mark and the $\Omega^{\rm shift}$ operator}
From the process $\mfU^{\ast,\rm anc}$ we obtain now another process.
We observe that relevant for us is the information on the path viewed from the present position which we can identify with a path which moves only between time $-t$ and $0$.
Therefore we apply the $(-t)$-shift at time $t$ to the marks, which are the ancestral path.
The new path we call the \emph{adjusted} path.
This gives uniquely a new process for which we obtain the \emph{well-posed martingale problem} (see Section~\ref{ss.extmc}) by adding the operator corresponding to the $(-t)$-shift of the ancestral path, which then allows us to apply our theory.
This additional operator we have to specify below.
In addition we act now with the operator $\Omega^{\uparrow,\rm anc}$, acting on the marks on elements in $D_{-t,0}$ at  time $t$ only and call this restriction of $\Omega^{\uparrow,\rm anc}$ now $\Omega^{\uparrow,\rm anc-sh}$.

{\em Observation:}
The interesting information on the path sits at time $ T $ of evolution starting in an element $ \{s\} \times D_{s,s} $ in the piece $ (T > t \geq s) $ i.e. a path in 
the time interval $ [s,T] $.

For our purpose revealing the generalized branching property the following fact is important.
Using the setup for the path space $D^\ast$ we can formulate a \emph{time-homogeneous} dynamics.
This is different from the classical historical process as defined in Chapter~12 of \cite{D93}.
We modify \eqref{e1326} by putting $ s=0 $ and adding at time $ t $ in \eqref{e1179} the generator of the path shift by $ -t $.

Before we can define the generator of our process we have to calculate the effect \emph{of the shift}.
Observe that the underlying path is that of a jump process with generator as in \eqref{e1313} which means that the path are piecewise constant with finitely many jumps in finite time intervals.
At each jump time of the path coinciding with a time of evaluation of the path by our function we may get a contribution to the generator since the jumps then leads to a jump of the evaluation functional of the path as we shift.
Such a situation does occur with probabilities we can control. Proceed as follows.

\paragraph{\em Calculation}
Consider now a polynomial $\Phi^{\phi,\chi}$ where $\chi$ is as in \eqref{e1311}.
Let $ -t_m^{(i)} < -t_{m-1}^{(i)} < \cdots < -t_1^{(i)} \leq 0 $ be the times where such a path is evaluated for the $n$ paths labelled $i=1, \cdots,n$  and which is tested whether it is  in points $ \xi_k^i,k=1,\cdots, m(i) $ at the times $ t_k^{(i)} $.
Furthermore consider the \emph{jump times} of the $ n $ paths which we denote by $ (s_k^{(i)})_{k=1,\cdots,m(i)} $ ordered from the left to the right in $ k $, where the path jumps from the point $ \zeta^i_k $ to $ \zeta_k^{i,+} $.

A contribution arises if $ \xi^i_k \in \{\zeta_k^{i,+}, \zeta_k^i\} $ since then in the concerned factor by a small shift a jump from $ 1 $ to $ 0 $ or $ 0 $ to $ 1 $ occurs and is then causing a change in the complete product of $ +1,0, $ or $ -1 $ respectively.
Note that this effect occurs along the whole path.
The corresponding operator is the jump generator of the switch of $\chi_{i,k}$ to $0$ resp. $1$, if a jump from $\xi^i_k$ away resp. into that point from outside at the rates given below in~\eqref{e1462}.

We have next to obtain the probability for this possibility to occur due to a small time shift $ \Delta t $, which allows for one jump at order $\Delta t$ probability at each of the possible time point $i=1,\cdots, m(i)$ of one path.
However the intensity for a jump from $\xi^i_k$ \emph{away} respectively \emph{into} at time $t_k^{(i)}$ for some $i \in \{1,\cdots,m(i)\}$ is given by
\begin{equation}\label{e1462}
\suml_{\xi \in \Omega \setminus \xi_i^k} \; a (\xi^i_k,\xi)
\mbox{  respectively  } \suml_{\xi \in \Omega} a(\xi,\xi_k^i)
\end{equation}
at both expressions summed over $k \in \{1,\cdots,m(i)\}$.
This can happen in one for each of the $n$ path (corresponding to the index $i$). 

We get therefore as additional generator term the expression:
\begin{equation}\label{e1367}
\Omega_t^{\rm shift} \Phi^{\phi,\chi} = \suml^n_{i=1}  \Phi^{\phi,A^{\rm cum}_i \chi}, \mbox{  with  } A^{\rm cum}_i \chi_{i,k} =\suml_{k=1}^{m(i)} \wt A_k \chi_{i,k}, 
\end{equation}
where $\wt A_k$ acts on the $k$-th component of the samples.
Note that the operator on the rhs. does in fact not depend on $t$.
\qed

Now add $\Omega^{\uparrow,\rm bran}, \Omega^{\uparrow,\rm grow}, \Omega^{\uparrow,\rm anc-sh}$ and $\Omega^{\rm shift}$ to get $\overset{\leftarrow}{\Omega}^{\ast,\rm anc}$.

\emph{(iii) Dynamics} \;
This allows us now to define a new process as follows.
\begin{definition}[$\U^{D^\ast}$-valued super random walk]\label{def.1436}\hfil\\
	We call the solution of this {\em well-posed} $(\overset{\leftarrow}{\Omega}^{\ast,\rm anc},\wh \Pi^\leftarrow)$- martingale problem the {\em adjusted ancestral path-marked genealogy-valued super random walk} a process denoted
	\begin{equation}\label{1364}
	\overset{\leftarrow}{\mfU}^{\ast, \rm anc},
	\end{equation}
	with values in $ \U^{D^\ast} $ as in \eqref{e.Dast}. \qed
\end{definition}

What is the \emph{relation} between states in $\U^{D^\ast}$ and $\U^D$?
Why is the martingale problem on $\U^{D^\ast}$ wellposed?
The key point is now that we have a collection of $ 1-1 $ maps $ \mfR_t, t \geq 0 $ between our process of interest with value in $ \U^V $ the one with $ V=D $ and the one with marks $ V=D^\ast $.
Namely for the path-valued process at time $ t $, starting at time $ s $ with $ t \geq s, t \geq 0 $, we shift to the left all the paths by $ t $ in the time coordinate and we obtain \textit{a time-homogeneous process with marks in $ D^\ast$}: i.e. for a functional of our process at some specific time $t$ we have a map
\begin{equation}\label{e1128}
\mfR_t \left(\left[U \times V, r  \otimes r_V,\nu \right] \right) =\left[ \left(U \times D^\ast, r \otimes r_{D^\ast}, \nu^\ast  \right)\right]
\end{equation}
induced by
\begin{equation}\label{e1132}
(u,v) \to (u,v^\ast) ;\;  v^\ast(r)=v(r-t), \; r \in \R \mbox{  and  } v  \mbox{  the path forward associated with  } u.
\end{equation}
This collection of mappings define on the \emph{paths} $(\mfR_t \mfU_t)_{t \geq 0}$ of the stochastic process a map $ \mfR $ and it will be  
\begin{equation}\label{e1143}
\mbox{the process   } \mfR (\mfU),
\end{equation}
to which we apply our theorem on the generalized branching property.
\emph{However since this is a bijection we do not loose any information we coded in the state description initially, but have it in a technically more convenient form.}

The $\U^{D^\ast}$-valued process arises as a functional (namely \eqref{e1143}), which also solves a well-posed martingale problem in its own right.
Here the wellposedness follows from the fact that $\mfR$ is one-to-one and the wellposedness of the basic martingale problem on $D^+$ mentioned above.

From the construction of the process (since the martingale problem is well-posed) we have 
\begin{equation}\label{e1478}
\overset{\leftarrow}{\mfU}^{\ast,\rm anc} = \mfR(\mfU^{\ast,\rm anc}).
\end{equation}
\begin{remark}\label{r.1364}
	With the choice of test functions as in \eqref{e1311a} we would get the generator actions:
	replace ``$A\chi^{\xi^k}$'' by $\tilde{A}\chi^{\uxi^k}$ where this new operator $\tilde{A}$ acts as follows on one test function as in \eqref{e1311a}:
	\begin{align}
	\tilde{A} \chi^{\uxi, \uF}(v) = \sum_{m=1}^M \frac{\chi(v)}{\chi_m(v)} \Big[ - & \int_{-\infty}^0 \partial_s F_m(s) \1_{\xi_m}(u(s)) \, \dx s + F_m(0) \1_{\xi_m})(u(0)) \\
	& + \int_0^\infty F_m(s)\, dx s \, A(x\mapsto \1_{\xi_m)(x)}|_{x = u(0)}  \Big]. \nonumber
	\end{align} \qed
\end{remark}

\paragraph{Step 3: The framework: Truncation, concatenation, subfamily decomposition and generalized branching property on the spatial level}\hfil\\
We have to define now the {\em concatenation} and the {\em truncation} for {\em $ V $-marked} ultrametric measure spaces, denoted $ \U^V $ for complete separable metric $ V $ space (recall \eqref{e1332} and sequel).
This means we have to extend the concepts in \eqref{eq:theta:a} - \eqref{eq:theta} from the previous subsection now to the {\em marked} case.
Here we have to distinguish the marked case with marks being the current location from the one marked with ancestral path. 

\paragraph{\em Case 1}
Look first at $ V=E $.
Let $\nu_U$ denote the projection of $\nu$ on $U$.
We first define $ \U^V(h) $ as the elements $[U \times V,r,\nu]$ of $ \U^V $ with $ (U,r) $ having $\nu_U$-essential diameter strictly less than $ h $, similarly $ (\U^V(h))^\sqcup $ with less than or equal to $h$.
In order to form a {\em concatenation} of $ (\U^V (h))^\sqcup $-elements $ \mfu_1=[U_1 \times V, r_{U_1} \otimes r_V,\nu_1] $ and $ \mfu_2=[U_2 \times V, r_{U_2} \otimes r_V,\nu_2] $ define, recall $\sqcup$ abbreviates $\sqcup^h$, and \eqref{e1310}-\eqref{e1313}) for $r_{U_1 \sqcup U_2}$:
\begin{equation}\label{e1215}
\mfu_1 \sqcup \mfu_2= \left[(U_1 \uplus U_2) \times V, (r_{U_1 \sqcup U_2}) \otimes r_V,\wt \nu_1 + \wt \nu_2 \right]
\end{equation}
where $ \wt \nu_1 (A) = \nu_1(A \cap (U_1 \times V)), \wt \nu_2(A) = \nu_2(A \cap (U_2 \times V))  $ for $ A \in \CB ((U_1 \uplus U_2) \times V )$.

Next we need the \emph{truncation}. The truncation affects only the distances and acts as before, hence we get again by lifting the operation from $ U $ to $ U \times V $:

\begin{equation}\label{1374}
\lfloor \mfu \rfloor (h) , \mfu \sqcup^h \mfu^\prime, T_t(\mfu).
\end{equation}

\paragraph{\em Case 2}
    In the second case $ V=D(\R,E) $ (respectively $ \R \times D(\R,E) $ as we will pass later to the time-homogeneous formulation) the marks contain themselves some {\em information from the past} in particular they contain information about genealogies. 
    We therefore have to extend the $ h$-truncation to the marks so that the marks contain only the information about the ancestral path for some time $ h $ back analogue to the genealogy which we include till depth $ h $.
\textbf{See here Figure 1.}
   
   \begin{figure}
     \includegraphics[width=14cm]{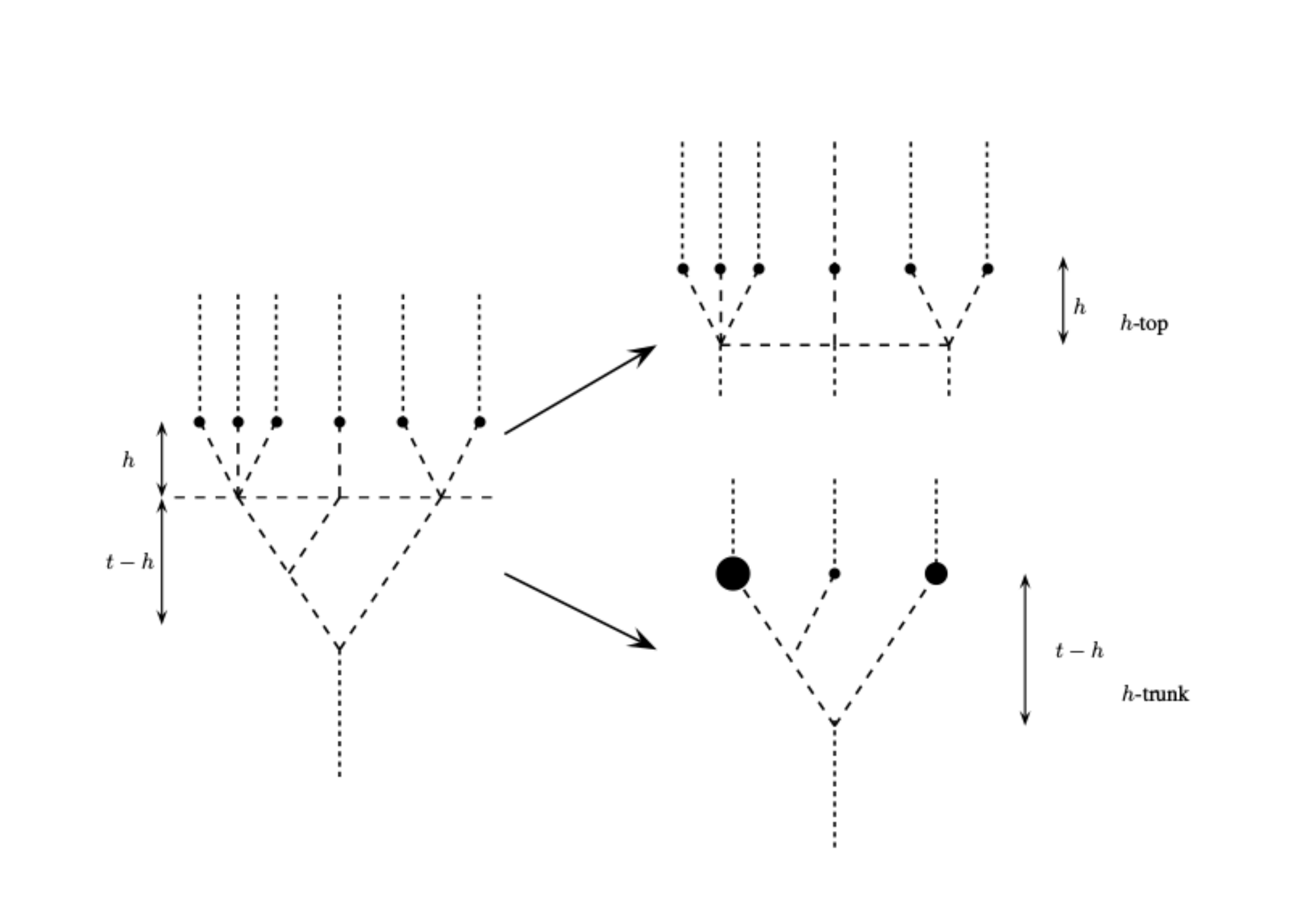}
  \caption{Example of path marked $h$-top and $h$-trunk, cut at height $t-h$. The \textbf{\small $\cdots$} mark the constant parts of the path and {\large --} the parts where different values might be assumed!}\label{f2607131839}
\end{figure} 
       
Fix a time horizon $ T $.
    We define a collection of mark spaces $ (V_t)_{t>0} $ with $ V_t \subseteq V_s \subseteq V $ if $T \geq t > s $ where for a process evolved till time $ T $ we consider the path fluctuating only between time $ t $ and $ T $:
    \begin{equation}\label{1372}
    V^T_t = \{ v \in D(\R,E), \; v(u) = v(0), u < 0; \; v(u) = v(t), u \leq t; v(u)=v(T), u \geq T \}, \quad V_t:=V_t^\infty.
    \end{equation}
    In our time-homogeneous system with marks in $ \R \times V $, the first mark component is preserved and we have to act only on the second, then where at time $ T $ we have ancestral path constant before time $ 0 $ and after time $ T $, we would \emph{truncate the mark} as follows:

Set for $ u \in \R, \; T \geq t \geq 0, $
    \begin{equation}\label{1373}
    T^V_t: V^T_0 \longrightarrow V^T_t, \; (T^V_t(v))(u) = v(u) 1_{[t,\infty)}(u)+ v(t)1_{[u \leq t]} (u).
       \end{equation}
Note that truncation means here \emph{truncations of the fluctuations} not the path as such and we look from the bottom up rather than from the top down keeping only randomness in the path beyond time $t$.
    
    Next define the \emph{$ h-$truncated} $ V-$marked objects.
    Begin with the $ t-$ truncation map $ T_t^V $.
    The truncation map $ T_t^V $ of $ [U \times V,r \otimes r_V,\nu] $ is defined considering first a map $ T_t^U $ on the genealogical part $ [U,r,\mu] $.
    Define $ T_t^U $ acting on $ [U,r,\mu] $ as before $ T_t $. The resulting space we have to equip with the truncated marks, i.e. each point in $ U $ now is \emph{marked} with $ T_t^V(v) $ instead of $ v \in V $.
    Finally we pass to the image measure of $ \nu $ under this combined mapping on $[U \times V,r \otimes r_V,\nu]$.
    
    We define $ \U^V(h) $ as a marked ultra-metric measure space where all distances are strictly less than $ h $ and the mark kernel $ \kappa $ satisfies
    \begin{equation}\label{1209}
    \kappa(u,V \setminus V^T_{T-h})=0 \; , \; \forall \; u \in U.
    \end{equation}
    Next $ \U^V(h)^\sqcup $ has distances $ \leq h $ and the mark kernel $ \kappa $ satisfies again \eqref{1209}.
    
\paragraph{Step 4: The setup for the generator criterion and results  on the spatial level}\hfil\\
The next task is to apply our criterion which is without problems if $ V=E $ but the situation is more subtle for $V=D(\R,E)$.

\paragraph{The result for $E$-marked super random walk.} \mbox{}\\
This will be a consequence of our more general (and more  complicated to formulate) result on ancestral path marked $\U^V$-valued super random walk, see Corollary~\ref{c.treewalk}, later on for that. 
\begin{theorem}[$\U^E$-valued super random walk]\label{TH.SUPWALK}\mbox{}\\
The process $\mfU^{\ast,E}$ has the generalized branching property. \qed
\end{theorem}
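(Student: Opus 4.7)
The strategy is to apply Theorem~\ref{T:BRANCHING}(b) to the operator $(\Omega^{\ast,E},\Pi_E)$ from \eqref{e1180}. Well-posedness of the martingale problem together with the existence of a stochastically continuous modification is already available from \cite{ggr_tvF14}, so what remains is to set up Assumptions~\ref{a.6} and~\ref{a.7} on $\U^E$ and then verify the additive-generator identity \eqref{eq:lin:gen}.

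First I would identify $\S=\U^E\cap\CE$ and $\S_t=(\U^E(t))^\sqcup\cap\CE$, with $\sqcup^t$ from \eqref{e1215} and $T_t$ from \eqref{1374}. Assumption~\ref{a.6} then follows from the marked version of the arguments in \cite{ggr_tvF14,infdiv}; here $V=E$ is only a passive mark since migration does not affect the metric, and closedness, continuity of $T_t$, commutativity and the neutral element are all standard. For Assumption~\ref{a.7} I would take as $D_t$ the closure under finite products of the Laplace-type functionals
\[
h_t(\mfu)=\exp\bigl(-\Phi^{n,\phi_t,\chi^{\underline\xi}}(T_t\mfu)\bigr),
\]
where $\phi_t\ge 0$ is smoothly $t$-parametrized and vanishes whenever any coordinate equals $2t$. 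This vanishing condition is exactly what ensures $t$-multiplicativity of $h_t$ on $\S_t$: under $\sqcup^t$ all cross-pair distances are $2t$, so the mixed integrals against $\phi_t$ drop out and $\Phi^{n,\phi_t,\chi^{\underline\xi}}$ is $\sqcup^t$-additive. The resulting $h_t$ are strictly positive; separation of points in $\S_t$ is obtained by restricting the $\Pi_E$-algebra, which is marked-Gromov-weak convergence-determining \cite{DGP11,GSW}, to diameters $\le 2t$.

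The core computation is then to write $\tilde\Omega^{\ast,E}\psi(t)h_t=\psi'(t)h_t+\psi(t)\,g_{\psi,h}(t,\cdot)\,h_t$ with $g_{\psi,h}(t,\cdot)$ being $t$-additive. I would treat the three components separately. The growth operator $\Omega^{\uparrow,\mathrm{grow}}$ and the migration operator $\Omega^{\uparrow,\mathrm{mig}}$ act as derivations on the exponent $-\Phi^{n,\phi_t,\chi^{\underline\xi}}(T_t\mfu)$ and pull out a factor linear in $\nu$, which is automatically $\sqcup^t$-additive; the $\partial_t$ piece supplies further terms from the $t$-dependence of $\phi_t$ and of $T_t$, still linear in $\nu$. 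For the branching operator $\Omega^{\uparrow,\mathrm{bran}}_\xi$ given in \eqref{e1265}, the action produces a factor $2b/\bar\mfu_\xi$, but the indicators $\1_{v_i=v_k=\xi}$ in $\phi'$ force integration to pairs of points located at $\xi$, so that exactly one factor $\bar\mfu_\xi=\nu^\ast(\{\xi\})$ factors out and cancels the denominator; what remains is linear in $\nu$ in the spirit of \eqref{e604}--\eqref{e608} of Example~\ref{ex.583}. Summing, repackaging, and checking $t$-additivity term by term delivers criterion (b).

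The main obstacle I expect is the third step, on two fronts. First, one must verify that the coupled constraints on $\phi_t$ (nonnegativity, $C^1$ joint regularity in $t$ and $r_{ij}$, vanishing on the set $\{r_{ij}=2t\}$) still leave a family rich enough to separate points of $\S_t$; the natural solution is to use a countable dense separating subfamily in the spirit of \cite{GSW} and smoothly shave off neighborhoods of the diagonal $\{r_{ij}=2t\}$. Second, the apparent singularity $1/\bar\mfu_\xi$ in the branching operator obstructs a direct application of $\tilde\Omega^{\ast,E}$ on states with vanishing local mass; the remedy is localization of the martingale problem to the Liggett--Spitzer-type subset $\CE$ as in \cite{DG03}, combined with the cancellation of $\bar\mfu_\xi$ already described, so that the $g_{\psi,h}$ extracted above is genuinely well-defined and $t$-additive on the effective state space.
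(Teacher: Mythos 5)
Your overall strategy --- a direct verification of Theorem~\ref{T:BRANCHING}(b) for $(\Omega^{\ast,E},\Pi_E)$ with $\S_t=(\U^E(t))^\sqcup$, the concatenation \eqref{e1215}, the truncation \eqref{1374}, and truncated exponential test functions --- is the route the paper follows in Section~\ref{ss.extmc}, where the $E$-marked case is the easy instance of the spatial verification because the marks are not truncated (the paper also records the statement as Corollary~\ref{c.treewalk} of the path-marked Theorem~\ref{T.ACBRANCH}, but the substance is the same direct check). Your treatment of the branching term, with the $2b/\bar\mfu_\xi$ prefactor cancelled by the mass of the sample variable that becomes free (up to its location constraint at $\xi$) after the $\theta_{k,l}$-substitution, is exactly the mechanism behind the spatial version of Proposition~\ref{l.tr5}; note only that this cancellation really lives in the action of the second-order branching operator on the \emph{exponential} $e^{-\Phi}$ (as in \eqref{e.tr8}), not in the first-order action on $\Phi$ itself, where the $-\phi\,\1_{\{v_i=v_k=\xi\}}$ part has no free variable.

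There is, however, one genuine gap. Theorem~\ref{T:BRANCHING} does not only require well-posedness of the polynomial martingale problem $(\Omega^{\ast,E},\Pi_E)$; its standing hypothesis is that any two solutions of the martingale problem for $(\tilde{A},\tilde{D},\delta_{(x,0)})$, where $\tilde{D}$ consists of the \emph{truncated multiplicative} functions $\psi(t)h_t$, satisfy $T_tX_t\eqd T_tX_t'$ for all $t$. A solution of this restricted martingale problem is not automatically a solution of the polynomial martingale problem of \cite{ggr_tvF14} --- the truncated test functions do not determine the full state --- so uniqueness cannot simply be imported from the literature, and your sentence ``well-posedness \dots is already available from \cite{ggr_tvF14}, so what remains is to set up Assumptions~\ref{a.6} and~\ref{a.7} and verify \eqref{eq:lin:gen}'' skips a substantial step. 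The paper discharges this hypothesis by (i) the equivalence of the $D_{\textup{lin}}$- and $\tilde{D}$-martingale problems (spatial version of Lemma~\ref{l.D_lin:tildeD}), (ii) the monotone approximation of truncated polynomials by smoothly truncated ones (Lemma~\ref{l.tr2} and its marked version, Lemma~\ref{l.tr2.marked}), and above all (iii) a Feynman--Kac duality with the $E$-marked spatial coalescent carrying a distance matrix and a location vector, with potential $\beta_{s,t}=\int_s^t\sum_{\xi\in E}\binom{\#p_{u,\xi}}{2}\,\dx u$, which pins down the law of $\lfloor\mfU_t\rfloor(t)$ for every solution (spatial version of Proposition~\ref{p.dual.ext}, carried out in Section~\ref{ss.2592} together with Lemma~\ref{lem:Pi:separating} and moment bounds on the local masses). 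None of this appears in your proposal, and without it the application of Theorem~\ref{T:BRANCHING} is not justified.
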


\paragraph{Result for path-valued process}\hfill\\
We now treat $V=D^+$ and $V=D^\ast$, which needs more efforts.
\medskip

\textbf{\em (1) The path-valued results: Introduction}\;
The situation is more subtle in the path-valued situation on which we have to focus here next. 
We first explain the idea.

\begin{remark}[Basic idea of truncation with marks in $ D^\ast $]\label{r.1247}
The needed truncation can be understood best as follows using a measure $\R$-tree representation of the state in $\U^{D^\ast}$ even though in our proofs this viewpoint \emph{is not used}.

Recall that for every {\em ultrametric measure space} of diameter $ 2t $ there is a unique (diameter $ 2t $) {\em $ \R-$tree} such that the points in the ultrametric space (better the support of the sampling measure) are the {\em leafs} of the $ \R-$tree.
Consider this representations for the states of our evolving genealogy.
If in addition these $ \R-$trees, see Example~\ref{ex.umspaces} for explanation and more references, for varying $t$ are all embedded in an weighted $\R$-tree of all individuals alive at some time before time $t$, a property of the dynamics (which we have in our case and which we will discuss more in \cite{ggr_tvF14} and we sketch some of it in part (ii) of the next subsection) allowing that we can define a \emph{$E$-valued ancestral path} in this object.
Then the ancestral path can be used to mark the points in the $ \R-$tree such that the $ E$-marked \emph{geodesic} between founding father and the current individuals of a maximal subfamily (i.e. a current leaf descending from the founding father) is a copy of the ancestral path.
Then the point on the geodesic starting from a leaf carries the current position of the ancestor at depth $ h $ in distance $ h $ of the tagged leaf.

In this picture based as the associated weighted $\R$-tree  we want for $h$-truncation to \emph{cut the tree at depth $ s=t-h $} and then use  the \emph{cut marked geodesic} as the ancestral path we associate with the leaf in the truncated state. 
This leads to a replacement of the ancestral path by one which we continue beyond the piece \emph{back time $ h $ constant}, so that we still have an \emph{$ \R$-indexed path} but no information from the past before time $ t-h $ is retained.
Note that this means introducing the one root at depth $ h $ via the truncated and now depth-$h$ $\R-$tree and at this root we have a mark-kernel (even if we had before a mark function).
Note that this new path if evaluated at time positions gives different numbers then the one where we \emph{restrict} to path which are constant before time $ t-h $.
Note that the latter procedure would include information on the past before time $ t-h $.

We may go further and shift the obtained path by $ t $ to the left to obtain for every $ t $ always a path from $ D_{-\infty,0} $ which is in fact for the used initial state in $ D_{-t,0} $ then for $ t>0 $ also in $ D_{-t,0} $ and after truncation in $ D_{-h,0} $ and is for all $ t \geq h \geq 0 $ independent of $ t $.
This fact is the reason why we get for $ \mfR(\mfU) $ the generalized  branching property. \qed
\end{remark}

\textbf{\em (2) A problem in the case of path-valued marks and its solution} \;
    This construction above however does not quite fit our setup in Theorem~\ref{T:BRANCHING} yet since we now do not have images under truncation in the right set {\em since in \eqref{1372}-\eqref{1209} we have still $ T $ around} (but recall our $ \R-$indexed path have started evolving at time $ 0 $ up to the current time $ T $).
    This time dependence we have to remove. 
However we have already introduced a system with \emph{reduced information} namely $ \overset{\leftarrow}{\mfU}^{\ast, \rm anc} $, since we are interested in the ancestral path looked backward from the present time.
    This means if we code the present time in the state we are only interested in the element in $ D^\ast$ we get! 
    We set therefore for $ t>0 $:
    \begin{equation}\label{e1288}
V_t=D^\ast_t = D_{-t,0} \subseteq D^\ast.
    \end{equation} 
\begin{remark}\label{r.1292}
    We may use as marks now $ D^\ast $ rather than $ D^+ $ but we note that we do not loose information this way if we \emph{know} that we start in initial conditions at time $ 0 $ as specified, namely constant before time $ 0 $, since then by piecing that constant piece together with the piece from  $ D^\ast $ shifted by $ t $ beyond time $ 0 $ we reconstruct uniquely the original state at time $ t $. \qed
    \end{remark}
    
We saw above that we may pass via the map $ \mfR $ to the mark space $ V=D^\ast $ if we take the time-space process.
This induces also a map on the truncated objects where we now get with $ t$-truncations elements in $ V^\ast_t=D^\ast_t $.
    The same relations hold then for the truncated marked ultrametric measure spaces induced by $ D_t^\ast, D^\ast $ as we required for $ S_t $ and $ S $.
    
Then we can now define the full truncation:
    \begin{equation}\label{e1374}
    T^\ast_t \left( (U \times V, r_U \otimes r_V, \nu) \right) = (S_t \times D^\ast_t, (r_U \wedge 2t) \otimes r_{V^\ast}, (id \otimes T_t^{D_t^\ast})_\ast(\nu)),
    \end{equation}
    which induces the map on the of the $ D^\ast$-marked ultrametric spaces as well, \emph{correspondingly the $ S_t $ are defined by}:
    \begin{equation}\label{e1207}
    {\Big((\U(t))^\sqcup \Big)}^{D^\ast_t}, \; t \geq 0.
    \end{equation}
    
    The $ h-$concatenation of two $ D^\ast-$marked forests is now defined by:
    \label{1264}\begin{equation}
    \mbox{ in \eqref{e1215} we replace   } V \mbox{  by  } D^\ast.
    \end{equation}
    This means for the corresponding set of polynomials we obtain the new elements
    \begin{equation}\label{1479}
    \wh \prod \mbox{  and  } \wh \prod^\leftarrow
    \end{equation}
and in order to get the \emph{corresponding} elements we replace $ \chi $ by $ \chi^\ast $ where we specify the time points where we evaluate path now in points of the left half axis.
    These objects again satisfy the conditions of Assumption~\ref{a.6}.

\emph{Summarizing we have the setup of our Theorem~\ref{T:BRANCHING}} \quad
The semigroups $(S_h,\sqcup^h), h \geq 0,$ of Assumption~\ref{a.6} are as follows:
\begin{equation}\label{e1473}
 S = \U^{D^\ast} ,\ S_h = \U^{D^\ast_h}.
\end{equation}
For the definition of the mark spaces see \eqref{e1146}.

Truncation is defined via the pull-back mappings.
Therefore let $\mfu = [U\times D^\ast, r\otimes r_V, \nu ] \in \U^{V^\ast}$ and $h \geq 0$.
Define the truncated space $T_h \mfu$ as 
\begin{equation}\label{e1481}
 T^\ast_h \mfu =  [U\times D^\ast, (r\wedge 2h) \otimes r_V, (id \otimes T_h^\ast)_\ast \nu ],
\end{equation}
where $T_h^\ast : D^\ast \mapsto D^\ast_t$ via
\begin{equation}\label{e1485}
 T_h^\ast v = 
 \begin{cases} 
   v(-t) ,& \ s < -t, \\
   v(s), & \ s \geq -t.
 \end{cases}
\end{equation}
Concatenation follows the same definition as \eqref{e1215}.\medskip

\textbf{\em (3) Results on (Adjusted path-)marked genealogical super random walk}
Having completed these preparations we state a key fact about the martingale problem. 
Recall from above truncation and $ h- $concatenation.
The truncation $ T_h $ at depth $ h $ now cuts the genealogical distance at $ 2h $ and the mark, i.e. the path is set constant up to a piece of length $ h $ and lies in $ D_{-h,0} $. 
Therefore the map $ T_h $ now cuts of the $ h $-top of the $ E $-marked weighted $ \R $-tree associated with $ \mfU $ and if we work with the process $ (\overset{\leftarrow}{\mfU}_t^{\ast, \rm anc})_{t \geq 0} $ our approach fits and Theorem~\ref{T:BRANCHING} will apply and give the following.

\begin{theorem}[Functional of $ \mfU_t^{\ast,{\textup{anc}}} $ has generalised branching property]\label{T.ACBRANCH}\mbox{}\\ 
The adjusted ancestral path marked $\U^{D^\ast}$-valued super random walk satisfies:
\begin{equation}\label{e1432}
\left( \left(\overset{\leftarrow}{\mfU}^{\ast, \rm anc}_t \right) \right)_{t \geq 0} \mbox{ has the generalized branching property.} \qquad \square
\end{equation} 
\end{theorem}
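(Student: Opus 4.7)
The plan is to apply part (b) of Theorem~\ref{T:BRANCHING} to the time-space process $(t,\overset{\leftarrow}{\mfU}^{\ast,\rm anc}_t)_{t\geq 0}$. Well-posedness of the underlying martingale problem follows from Proposition~\ref{prop.1426} together with the bijection $\mfR$ of \eqref{e1128}--\eqref{e1132}, which identifies $\overset{\leftarrow}{\mfU}^{\ast,\rm anc}$ with a measurable functional of $\mfU^{\ast,\rm anc}$, and from~\cite{EK86}[Lemma~4.3.2] for the passage to the time-space operator $\wt A = \overset{\leftarrow}{\Omega}^{\ast,\rm anc} + \partial_t$. Stochastic continuity on $\U^{D^\ast}$ follows because all four generator pieces are bounded on the class of test functions once the per-site masses are bounded away from $0$ on the support of the test function, so the corresponding martingales have \cadlag versions with the desired regularity.

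First I would verify Assumptions~\ref{a.6} and~\ref{a.7} with $S=\U^{D^\ast}$, $S_t=S_t$ as in \eqref{e1473}, truncation $T^\ast_t$ as in \eqref{e1481}--\eqref{e1485} and concatenation $\sqcup^t$ as in \eqref{e1215} (with $V$ replaced by $D^\ast$). Nesting $S_s\subseteq S_t$ for $s\leq t$, continuity of $T_t^\ast$, commutativity and associativity of $\sqcup^t$, and existence of the neutral (zero) element are immediate from the corresponding properties in the marked ultrametric measure space setting \cite{infdiv,ggr_tvF14,GSW}. For Assumption~\ref{a.7} I would take $D_t\subset b\CB(S_t)$ to consist of Laplace functionals
\begin{equation}
 h_t(\mfu) \;=\; \exp\bigl(-\Phi^{n,\phi,\chi^\ast}(\mfu)\bigr),\qquad \phi\geq 0,\ \chi^\ast\geq 0,
\end{equation}
with $\chi^\ast$ of the product form \eqref{e1311}--\eqref{e1322} but evaluating paths only at times in $[-t,0]$. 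These are strictly positive, bounded, $t$-multiplicative on $S_t$ (since $\sqcup^t$ adds the measures $\nu$ and the monomial $\Phi^{n,\phi,\chi^\ast}$ is a sum of products under disjoint concatenation once $\phi$ is extended to account for the cross distance $2t$, which drops in the exponent), and they separate $\U^{D^\ast_t}$ by the standard polynomial/Gromov-weak argument.

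Next I would verify criterion \eqref{eq:lin:gen} of Theorem~\ref{T:BRANCHING}\ref{i:t:2} by decomposing $\overset{\leftarrow}{\Omega}^{\ast,\rm anc}=\Omega^{\uparrow,\rm grow}+\Omega^{\uparrow,\rm bran}+\Omega^{\uparrow,\rm anc\text{-}sh}+\Omega^{\rm shift}$ and checking each summand separately, extracting a factor $h_t(\mfu)$ and a contribution to $g_{\psi,h}(t,\mfu)$ that is $t$-additive in $\mfu$. For $\Omega^{\uparrow,\rm grow}$, applied to the exponential functional the chain rule produces $h_t$ times an integral against $\nu^{\otimes n}$, which is a polynomial; because $\sqcup^t$ adds $\nu$'s on disjoint supports at distance $2t$, this contribution is $t$-additive. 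For $\Omega^{\uparrow,\rm anc\text{-}sh}$ and $\Omega^{\rm shift}$ the actions are, per \eqref{e.1157}, \eqref{e1367}, linear in $\nu$ through sums of the form $\sum_k\Phi^{n,\phi,A^\ast_k\wh\chi}$ and similarly for $A^{\rm cum}_i$; again linearity in $\nu$ gives $t$-additivity under $\sqcup^t$. For $\Omega^{\uparrow,\rm bran}$ the apparent singularity $1/\bar{\mfu}_\xi$ is cancelled, after differentiating the exponential, by the double indicator $\1_{\{v_i=v_k=\xi\}}$ which produces a factor $\bar{\mfu}_\xi^{2}$ in the numerator (with one factor absorbed into the sampling probability); the remaining expression is a per-site quantity linear in $\bar{\mfu}_\xi$ and in $\nu$, summed over $\xi$, and hence $t$-additive because $\sqcup^t$ simply adds $\bar{\mfu}_\xi$ site-by-site. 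Combining the four pieces with the $\psi'(t)h_t$ coming from $\partial_t$ produces $\wt A(\psi h_t)(\mfu)=\psi'(t)h_t(\mfu)+\psi(t)g_{\psi,h}(t,\mfu)h_t(\mfu)$ with $g_{\psi,h}(t,\cdot)$ $t$-additive on each $S_t$, as required.

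The main obstacle is the branching operator $\Omega^{\uparrow,\rm bran}$: the $1/\bar{\mfu}_\xi$ factor is genuinely singular on states with empty sites, and one must verify that, after passing to a multiplicative Laplace test function and using that the replacement operator $\theta_{k,\ell}$ only contributes when both sampled marks equal $\xi$, the resulting per-site expression is linear in the site mass and therefore $t$-additive under disjoint concatenation. A secondary technical point is that the test functions arising from \eqref{e1311} are only measurable (not continuous) in the mark coordinate, so I would, as indicated in Remark~\ref{r.1244} and Remark~\ref{r.1364}, replace the pointwise evaluations by the occupation-time smoothings of \eqref{e1311a} to obtain an equivalent martingale problem on continuous functions, argue the criterion there, and pass back by a standard approximation. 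Once these two points are handled, Theorem~\ref{T:BRANCHING}\ref{i:t:2} directly yields the generalized branching property \eqref{e1432}. Theorem~\ref{TH.SUPWALK} then follows by applying the same argument in the simpler setting $V=E$ (or as a projection of the present result via the Markovian functional recording only current positions).
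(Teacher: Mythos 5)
Your overall strategy --- apply Theorem~\ref{T:BRANCHING}\ref{i:t:2} to the time-space process, get well-posedness from Proposition~\ref{prop.1426} and the bijection $\mfR$, and check $t$-additivity of $g$ operator piece by piece --- is the same route the paper takes. But there is a genuine gap at the very centre of the argument: your class $D_t$ of Laplace functionals $\exp(-\Phi^{n,\phi,\chi^\ast})$ with \emph{untruncated} $\phi\in C_b^1(\bbD_n)$ is \textbf{not} $t$-multiplicative on $S_t$, so Assumption~\ref{a.7} and hypothesis \eqref{eq:lin:gen} both fail for it. Under $\sqcup^t$ the measure of the concatenation is $\wt\nu_1+\wt\nu_2$, so a degree-$n$ monomial ($n\geq 2$) evaluated at $\mfu_1\sqcup^t\mfu_2$ contains all $2^n$ mixed sampling terms, in which some distances equal exactly $2t$. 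These cross terms vanish --- and hence $\Phi(\mfu_1\sqcup^t\mfu_2)=\Phi(\mfu_1)+\Phi(\mfu_2)$, making the exponential multiplicative --- only if $\phi(\dr)=0$ whenever some $r_{ij}\geq 2t$, i.e.\ only for \emph{truncated} polynomials as in \eqref{e1319}. Your parenthetical ``once $\phi$ is extended to account for the cross distance $2t$, which drops in the exponent'' is precisely the point that needs (and does not receive) an argument. The same defect propagates through your generator computation: the contributions of $\Omega^{\uparrow,\mathrm{grow}}$, $\Omega^{\uparrow,\mathrm{anc\text{-}sh}}$ and $\Omega^{\mathrm{shift}}$ to $g_{\psi,h}$ are degree-$n$ polynomials, not functions ``linear in $\nu$'', and they are $\sqcup^t$-additive only because the truncation factor kills mixed samples; without it the additivity claim is simply false. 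You correctly note $f(x)=f(T_tx)$ must hold for the mark coordinate (hence your restriction of $\chi^\ast$ to evaluation times in $[-t,0]$), but the identical requirement in the distance coordinate forces the truncation of $\phi$ at $2t$, which you omit.

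Restoring the truncation creates the secondary problem that $\phi\cdot\1(r_{ij}<2t)$ is not $C^1$, so the truncated exponentials are not in the domain of the martingale problem. This is why the paper introduces the sliding-window functions $\varrho^{(n)}$ of Assumptions~\ref{a.1}--\ref{a.2} (and their mark analogue $\varrho^2_t$), proves the equivalence of martingale problems on $D_{\textup{lin}}$ and $\tilde D$ (Lemma~\ref{l.D_lin:tildeD} and its spatial version), computes $g$ for the smoothed functions (Proposition~\ref{l.tr5} and \eqref{e1375}), and only then recovers the sharp truncation by the monotone approximation of Lemma~\ref{l.tr2}/\ref{l.tr2.marked}. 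Your occupation-time smoothing of the path evaluations addresses a different (and real, but subordinate) regularity issue in the mark variable; it does not substitute for the $\varrho^1_t$-truncation of the distance variable. Finally, the hypothesis of Theorem~\ref{T:BRANCHING} that any two solutions have identically distributed $t$-tops is obtained in the paper from the Feynman--Kac duality with the path-marked spatial coalescent applied to exactly these truncated duality functions; citing Proposition~\ref{prop.1426} covers this, but only once your test-function class coincides with the one for which that well-posedness is proved.
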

If we make a \emph{strong assumption} we can obtain the branching property of a functional.
\begin{corollary}[Generalization]\label{r.1428}
Suppose we have a separable, complete, metric space $ E $ for which our process is solution of a well-posed martingale problem on $ \U^V $ with $ V=E $ as in \eqref{e1333}, then the above theorem holds as well.
\end{corollary}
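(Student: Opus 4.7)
The plan is to observe that the verification of the operator criterion in Theorem~\ref{T:BRANCHING}(b) used to prove Theorem~\ref{TH.SUPWALK} is algebraic in nature and does not actually exploit the countable abelian group structure of $E$ beyond what is needed for the existence and uniqueness of the martingale problem; the latter is now a hypothesis of the corollary. Hence I would re-run that verification over a general Polish $E$ and appeal once more to Theorem~\ref{T:BRANCHING}.

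First I would check that Assumption~\ref{a.6} still holds for $\S = \U^E$ with arbitrary Polish $E$. The set $\S_t = (\U^E(t))^\sqcup$ is defined in \eqref{1374} via the essential diameter of the $U$-component only, so the concatenation \eqref{e1215} and the truncation \eqref{e1374} restricted to the case $V=E$ are well-defined without any group or countability structure, and their continuity in the marked Gromov-weak topology follows as in \cite{GSW}. The inclusions $\S_u \subseteq \S_s$ for $u \leq s$, the identity $T_t\vert_{\S_t}=\mathrm{id}$, and the extension rule \eqref{e1306} are tautological.

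Second I would verify Assumption~\ref{a.7} by exhibiting a separating family $D_t$ of strictly positive $t$-multiplicative functions on $\S_t$. Take $h_t = \exp(-\Phi^{n,\phi,\chi})$ with $\phi \in C_b(\R^{\binom{n}{2}},\R_+)$ supported in $[0,2t)^{\binom{n}{2}}$ and $\chi$ a product of continuous functions on $E$ as in \eqref{e1178}. The support condition on $\phi$ forces all mixed contributions in $\Phi^{n,\phi,\chi}(\mfu_1\sqcup^t\mfu_2)$ to vanish, because between-subpopulation distances are exactly $2t$; hence $\Phi^{n,\phi,\chi}$ is $t$-additive on $\S_t$ and $h_t$ is $t$-multiplicative. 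Strict positivity is obvious and separation on $\S_t$ follows from the density of polynomials in $C_b(\U^E)$ (see \cite{DGP11, GSW}), exactly as in the discrete case.

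Third I would verify the linear form \eqref{eq:lin:gen}. Writing $\tilde A = \Omega^{\ast,E} + \partial_t$ and applying it to $\psi(t) h_t$ with $h_t$ of the above form, one computes that each of the growth, branching and migration pieces of $\Omega^{\ast,E}$ produces, after the chain rule on the exponential, a prefactor that is a polynomial linear in the measure $\nu$ and whose integrand is supported on distances $\leq 2t$; this prefactor is therefore $t$-additive under $\sqcup^t$ by the same cross-term cancellation used in the previous step. Summing the three contributions and the trivial additive $\partial_t$-term yields exactly \eqref{eq:lin:gen} with a $t$-additive $g_{\psi,h}$. The singularity $1/\bar\mfu_\xi$ in the branching operator is treated as in \cite{ggr_tvF14} by restricting $\chi$ to sites with positive local mass, since such restrictions still yield a separating family on $\S_t$.

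The main obstacle is not the algebraic verification but the analytic one that is hidden in the hypothesis: for a continuum $E$ one must know a priori that the martingale problem on the chosen polynomial algebra is well-posed and that a stochastically continuous version exists, since without that one cannot even apply Theorem~\ref{T:BRANCHING}. Once this hypothesis is granted, the three steps above produce the operator form required by criterion~(b) and Theorem~\ref{T:BRANCHING} delivers the generalized branching property, as claimed.
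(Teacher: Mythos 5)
Your argument is correct in substance, but it takes a different route from the one the paper actually uses for this corollary. The paper disposes of Corollary~\ref{r.1428} (and Corollary~\ref{c.treewalk}) in Remark~\ref{r.Emark}(a): the $E$-marked process is viewed as a Markovian functional of the ancestral-path-marked process $\mfU^{\ast,\mathrm{anc}}$ obtained by projecting each path mark onto its time-$t$ value; since for $V=E$ the truncation does not touch the marks, and since the multiplicative test functions needed for the $E$-marked generator criterion form a subclass of those already checked for the path-marked process, Theorem~\ref{T:BRANCHING} applies with no new computation. You instead re-run the verification of criterion (b) from scratch over a general Polish $E$. Your route is arguably the more honest one for this particular corollary, because the path-marked process is only constructed in the paper for countable abelian groups, so the projection argument cannot literally be invoked for a general Polish $E$; what your approach buys is self-containedness, and what the paper's buys is economy plus the structural insight that all these processes sit inside one master object.

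One point you treat too lightly: writing ``$\phi$ supported in $[0,2t)^{\binom{n}{2}}$'' for a fixed $t$ is not by itself enough to make $\psi(t)\exp(-\Phi^{n,\phi,\chi})$ a legitimate test function for the time-space martingale problem. The paper needs the sliding-window functions $\varrho_t$ of Assumptions~\ref{a.1} and~\ref{a.2}, whose translation compatibility $\varrho^{(n)}(t,\dr)=\varrho^{(n)}(t+c,\dr+2\underline{\underline{c}})$ is exactly what makes the extra terms $\Phi^{n,\phi\,\partial_t\varrho_t}$ and $\Phi^{n,2\phi\overline{\nabla}\varrho_t}$ cancel in \eqref{e.tr4}, so that $\tilde A\psi h_t$ collapses to the form \eqref{eq:lin:gen}; true indicator truncations are then recovered by the monotone approximation of Lemma~\ref{l.tr2}. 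Since this machinery acts only on the distance coordinates and is entirely mark-independent, it carries over verbatim to arbitrary Polish $E$, so this is a gap in exposition rather than in validity — but it should be cited rather than replaced by a bare support condition. Likewise, ``a prefactor that is a polynomial linear in the measure'' should read: a truncated polynomial of degree lowered by one (the $1/\bar\mfu_\xi$ cancels against integrating out the dummy variable introduced by $\theta_{k,l}$), which is what actually yields $t$-additivity.
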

Since it is known form the literature that the assumption above holds we get:
\begin{corollary}[$ \U^E$-valued super random walk]\label{c.treewalk}
The process $ (\mfU^E_t)_{t \geq 0} $ has the generalized branching property.$ \quad \square $
\end{corollary}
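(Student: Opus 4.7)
The plan is to apply Corollary~\ref{r.1428}, whose sole hypothesis is that the process under consideration solves a well-posed martingale problem on $\U^V$ with $V=E$. By Definition~\ref{def.1332}, the $\U^E$-valued super random walk $(\mfU^{\ast,E}_t)_{t\geq 0}$ is defined as the unique solution of the $(\Omega^{\ast,E},\Pi_E)$-martingale problem, and well-posedness of this martingale problem is established in \cite{ggr_tvF14}. So structurally the statement is an immediate specialisation of Corollary~\ref{r.1428}; the work consists in making sure the abstract framework of Assumptions~\ref{a.6}--\ref{a.7} and Theorem~\ref{T:BRANCHING} has indeed been consistently realised in the $V=E$ setting.

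For the framework I would take $S=\U^E$ and $S_t=(\U^E(t))^\sqcup$, the equivalence classes of $E$-marked ultrametric measure spaces of ultrametric diameter at most $2t$, and let $T_t$ and $\sqcup^t$ be the $E$-marked truncation and concatenation constructed in Case~1 of Step~3 (see \eqref{e1215} and \eqref{1374}). The mark space $E$ plays no role in either operation, so the verification of the semigroup, neutral-element and continuity properties of Assumption~\ref{a.6} is a verbatim lifting of the unmarked case treated in Section~\ref{ss.trvalFD}. For Assumption~\ref{a.7} I would take the Laplace polynomials $h_t(\mfu)=\exp(-\Phi^{n,\phi,\chi^{\uxi}}(\mfu))$ with $\phi\ge 0$ vanishing at the cross-distance $2t$ and $\chi^{\uxi}$ a product of site-indicators as in~\eqref{e1264}; only the two pure allocations (all sample points in $\mfu_1$ or all in $\mfu_2$) survive the $2^n$-fold expansion of $\Phi^{n,\phi,\chi^{\uxi}}(\mfu_1\sqcup^t\mfu_2)$ because every cross-pair carries distance exactly $2t$, yielding the desired $t$-multiplicativity, while separation on $S_t$ is the standard marked Gromov-weak fact from \cite{DGP11,GSW}.

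With the framework in place, the remaining content is to verify criterion~(b) of Theorem~\ref{T:BRANCHING} for $\tilde A=\partial_t+\Omega^{\ast,E}$ on the test class $\tilde D=\{\psi(t)h_t\}$. The growth and migration pieces $\Omega^{\uparrow,\rm grow}$ and $\Omega^{\uparrow,\rm mig}$ act linearly in the sampling measure and produce factors $g(\mfu)h_t(\mfu)$ in which $g$ is a single-individual sum, hence immediately $t$-additive under $\sqcup^t$. I expect the \emph{main obstacle} to be the branching term $\Omega^{\uparrow,\rm bran}_\xi$, whose rate $1/\overline{\mfu}_\xi$ is manifestly non-additive. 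The resolution, which is the structural heart of the argument and proceeds in parallel with the treatment of $\overset{\leftarrow}{\Omega}^{\ast,\rm anc}$ in the proof of Theorem~\ref{T.ACBRANCH}, is that under concatenation the numerator $\Phi^{n,\phi\circ\theta_{k,l}\cdot\chi^{\uxi}}(\mfu_1\sqcup^t\mfu_2)$ collapses to its two pure single-sub-tree contributions (cross allocations being annihilated by the site-indicators forcing $v_k=v_l=\xi$ to lie in the same sub-population), and these match precisely the additive split $\overline{\mfu_1\sqcup^t\mfu_2}|_\xi=\overline{\mfu_1}|_\xi+\overline{\mfu_2}|_\xi$, so the ratios are $t$-additive. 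This produces the required $g_{\psi,h}$ and, via Theorem~\ref{T:BRANCHING}(b), completes the plan.
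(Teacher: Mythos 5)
Your overall architecture is the paper's: invoke Corollary~\ref{r.1428} (well-posedness of the $(\Omega^{\ast,E},\Pi_E)$-martingale problem from \cite{ggr_tvF14}) and then run Theorem~\ref{T:BRANCHING}(b) with truncated Laplace polynomials on $S_t=(\U^E(t))^\sqcup$. The paper phrases the deduction slightly differently --- it obtains the corollary from the path-marked Theorem~\ref{T.ACBRANCH} by observing (Remark~\ref{r.Emark}(a)) that $\mfU^{\ast,E}$ is a Markovian functional of $\mfU^{\ast,\rm anc}$ and that the multiplicative test functions needed for $V=E$ are among those already checked there, the direct verification being carried out in Section~\ref{ss.extmc} where the $E$-marked case is the easy sub-case because marks need no truncation --- but up to that point the difference is presentational.

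The genuine problem is in your treatment of the branching term, which you correctly single out as the only non-trivial point. Two things go wrong. First, the site-indicator $1_{\{v_k=v_l=\xi\}}$ does \emph{not} force the resampling pair into the same sub-population of $\mfu_1\sqcup^t\mfu_2$: individuals of $\mfu_1$ and of $\mfu_2$ can both sit at the site $\xi$ while having genealogical distance exactly $2t$, so the site indicators annihilate nothing. Second, even granting a collapse of the numerator into $\Phi(\mfu_1)+\Phi(\mfu_2)$, the inference ``additive numerator over additive denominator is additive'' is false: $(a_1+a_2)/(b_1+b_2)\neq a_1/b_1+a_2/b_2$ for positive quantities. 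If one really dropped all cross-allocations one would obtain a weighted average of the two single-sub-tree ratios, and $t$-additivity would \emph{fail}. The correct mechanism --- Proposition~\ref{l.tr5} and its spatial lift in Section~\ref{ss.7.1} --- is different: after applying $\theta_{k,l}$ the distances of the $l$-th sample point are overwritten by those of the $k$-th, so that point becomes a free variable constrained only to the site $\xi$; integrating it over $U\times\{\xi\}$ produces exactly the factor $\bar\mfu_\xi=\bar\mfu_{1,\xi}+\bar\mfu_{2,\xi}$ that cancels the rate $1/\bar\mfu_\xi$ in \eqref{e1265}, leaving a genuine truncated polynomial of one degree lower; and it is the distance truncation in $\phi$ (not the site indicator) that then confines the remaining $n-1$ sample points to a single sub-population and yields $\sqcup^t$-additivity. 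Your proof needs this cancellation spelled out; as written, the step that carries the whole corollary does not go through.
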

There is another case where we know that the strong assumption of the wellposedness of the martingale problem through existing deep work \cite{DP91} and we obtain:
\begin{corollary}[Historical process of super random walk]\label{cor.histproc}
The historical process associated with \eqref{e1321} has the property that $ (\nu^\ast_t)_{t \geq 0} $ of \eqref{e1331} satisfies that $ (\mfR_t (\nu^\ast_t))_{t \geq 0}$, the historical process of adjusted path, has the generalized branching property.$ \quad \square $
\end{corollary}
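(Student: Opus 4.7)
The plan is to deduce Corollary~\ref{cor.histproc} from Theorem~\ref{T.ACBRANCH} by identifying the (adjusted) historical process as a measurable Markovian functional of the adjusted ancestral path marked $\U^{D^\ast}$-valued super random walk $\overset{\leftarrow}{\mfU}^{\ast,\rm anc}$. Introduce the projection map
\begin{equation*}
\Psi:\U^{D^\ast}\longrightarrow \CM(D^\ast),\qquad \Psi\bigl([U\times D^\ast, r\otimes r_{D^\ast},\nu]\bigr)=\nu^\ast,
\end{equation*}
where $\nu^\ast$ is the second marginal as in \eqref{e1335}. The first step is to verify the almost sure identification
\begin{equation*}
\Psi\bigl(\overset{\leftarrow}{\mfU}^{\ast,\rm anc}_t\bigr)=\mfR_t(\nu^\ast_t),\qquad t\geq 0,
\end{equation*}
where $(\nu^\ast_t)_{t\geq 0}$ is the historical process of \eqref{e1321}. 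This can be argued either by comparing the well-posed martingale problems (the operator $\overset{\leftarrow}{\Omega}^{\ast,\rm anc}$ acting on test functions $\Phi^{n,\phi,\chi}$ with $\phi\equiv 1$ produces exactly the Dawson--Perkins historical operator of \cite{DP91} acting on $\chi$), or by recognizing both processes as scaling limits of the same labeled branching random walk and using uniqueness of the limit.

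The second step is to verify that the semigroup-with-truncation structure of Assumption~\ref{a.6} transfers under $\Psi$. Set $S=\CM(D^\ast)$, $S_h=\CM(D^\ast_h)$, and define truncation and concatenation on measures by
\begin{equation*}
T^{\CM}_h(\mu)=(T^\ast_h)_\ast \mu,\qquad \mu\sqcup^h \nu=T^{\CM}_h(\mu)+T^{\CM}_h(\nu),
\end{equation*}
with $T^\ast_h$ as in \eqref{e1485}. A direct computation using \eqref{e1481} and \eqref{e1215} yields the two intertwining identities
\begin{equation*}
\Psi\circ T^\ast_h=T^{\CM}_h\circ \Psi,\qquad \Psi(\mfu\sqcup^h\mfv)=\Psi(\mfu)\sqcup^h\Psi(\mfv),
\end{equation*}
so $(S_h,\sqcup^h,T^{\CM}_h)_{h\geq 0}$ satisfies Assumption~\ref{a.6}. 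As separating multiplicative test functions on $S_h$ take the Laplace functionals $f_\phi(\mu)=\exp(-\langle \mu,\phi\rangle)$ with $\phi\in \textup{bp}\mcB(D^\ast_h)$, which verify Assumption~\ref{a.7}; each such $f_\phi$ pulls back to the multiplicative function $\exp(-\Phi^{1,1,\phi})\circ T^\ast_h$ on $\U^{D^\ast_h}$ (cf.\ \eqref{a.101}), which is $h$-multiplicative on $\U^{D^\ast_h}$ in the sense of Definition~\ref{d.tmultiadd}.

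Third, applying the generalized branching property from Theorem~\ref{T.ACBRANCH} to the pulled-back test functions $f_\phi\circ\Psi$ gives, for any $\mfu_1,\mfu_2\in (\U(s))^\sqcup{}^{D^\ast_s}$ and any $t\geq s$,
\begin{equation*}
\E_{\mfu_1\sqcup^s\mfu_2}\!\bigl[f_\phi\bigl(\Psi(\overset{\leftarrow}{\mfU}^{\ast,\rm anc}_t)\bigr)\bigr]
=\E_{\mfu_1}\!\bigl[f_\phi\bigl(\Psi(\overset{\leftarrow}{\mfU}^{\ast,\rm anc}_t)\bigr)\bigr]\cdot \E_{\mfu_2}\!\bigl[f_\phi\bigl(\Psi(\overset{\leftarrow}{\mfU}^{\ast,\rm anc}_t)\bigr)\bigr].
\end{equation*}
Invoking the identification from the first step and the fact that any initial $\nu^\ast_0\in\CM(D^\ast)$ with the Liggett--Spitzer projection in $\CE$ lifts to some initial $\mfu_0\in\U^{D^\ast}$ with $\Psi(\mfu_0)=\nu^\ast_0$ (e.g., via the trivial ultrametric $r\equiv 0$ on $\supp\nu^\ast_0$), this translates directly into the generalized branching property for $(\mfR_t(\nu^\ast_t))_{t\geq 0}$; separation by Laplace functionals closes the argument.

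The main obstacle is the first step, namely making the identification $\Psi(\overset{\leftarrow}{\mfU}^{\ast,\rm anc}_t)=\mfR_t(\nu^\ast_t)$ fully rigorous: one must match two well-posed martingale problems formulated on quite different state spaces, $\U^{D^\ast}$ and $\CM(D^\ast)$, and ensure that the martingale problem obtained by projecting $\overset{\leftarrow}{\Omega}^{\ast,\rm anc}$ under $\Psi$ coincides with the historical martingale problem of \cite{DP91} (after the time-shift $\mfR_t$). The auxiliary lifting of initial measures to ultrametric measure spaces is harmless precisely because, by the generalized branching property established for the $\U^{D^\ast}$-valued process, the law of $\Psi(\overset{\leftarrow}{\mfU}^{\ast,\rm anc}_t)$ depends only on $\Psi(\mfu_0)$ and not on the chosen lift, making the functional Markov and the identification well-defined.
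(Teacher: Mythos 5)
Your proposal is correct and follows essentially the same route as the paper, which (in Remark~\ref{r.Emark}(b)) obtains the corollary by viewing the adjusted historical process as the image of $\overset{\leftarrow}{\mfU}^{\ast,\rm anc}$ under the projection $[U\times V,r,\nu]\mapsto(\pi_V)_\ast\nu$, noting this functional is Markov (the Dawson--Perkins generator is recovered by setting $\phi\equiv 1$ in the polynomials), and taking truncation to be ``path kept constant before depth $h$'' and concatenation to be addition of measures, so that the generalized branching property is inherited. Your added details --- the intertwining identities $\Psi\circ T^\ast_h=T^{\CM}_h\circ\Psi$ and $\Psi(\mfu\sqcup^h\mfv)=\Psi(\mfu)\sqcup^h\Psi(\mfv)$, the separating Laplace functionals, and the lift of initial measures via the trivial ultrametric --- merely make explicit what the paper leaves as a sketch.
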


Why does this all follow from our Theorem~\ref{T.ACBRANCH}?

\begin{remark}\label{r.Emark}
(a) For the {\em $ E $-marked} genealogy-valued process where we record only the present location , i.e. $ V=E $ the corresponding truncation map will {\em not} change the mark.
The concatenation operation is now as before (with the different $ V $).
A measurable function of $ \mfU^{\ast, \rm anc} $ projecting the mark on the value of the path at time $ t $ gives us a process which is a Markov process in its own right solving a well-posed martingale problem with the operator in \eqref{e1180}, thus having the branching property.
In other words, we may use Theorem~\ref{T:BRANCHING} since those multiplicative functions we need to check for the generator criterion were already considered by the ancestral path marked genealogy-valued super random walk.

(b) The {\em historical process} is a functional of the $V$- marked genealogy, where now $ V=D^\ast(\R,E) $ i.e. we map
\begin{equation}\label{e1324}
[U \times V,r,\mu] \longrightarrow (\pi_V)_\ast \mu
\end{equation}
where $ \pi_V $ is the projection from $U \times V \rightarrow V $.
This functional is again a Markov process, namely a \emph{modification} of the one known as the \emph{historical process} introduced in \cite{DP91} but now using \emph{adjusted} path i.e. the \emph{state space} $ D^\ast $.

However there is a unique lifting to a $ V- $marked case by shifting by $ t$ the path which are constant path before time $ 0 $, i.e. which "start" at $ 0 $ and end fluctuating at the current time $ t $.
For the historical process we will have a truncation map which is the map induced by the replacement of path with the path which is {\em before depth $ h $ kept constant} and the concatenation $ \sqcup^h $ is the sum of the measures.
Hence this process inherits the generalized branching property for these choices of truncation and concatenation. \qed
\end{remark}
\begin{remark}[Infinite divisibility and \Levy-Khintchine formula]\label{r.1935}
We can now use the result we obtained for the process $\overset{\leftarrow}{\mfU}^{\ast,\rm anc}$ to show with results from \cite{infdiv}  that the process has \emph{infinitely divisible} marginal distributions if we start in a fixed initial state.
In particular do we then have a \emph{\Levy-Khintchine presentation for the marginals} and with it an \emph{inhomogeneous Poisson point process representation} of the state at time $t$. For that we can generalize the definition and the proofs of the Theorem 1.37,2.44, Corollary 1.40 in \cite{infdiv} to the object $[(S_t, \sqcup^t)_{t \geq 0}, (T_t)_{t \geq 0}]$ immediately, since we use only what we have postulated as Assumption~\ref{a.6} and ~\ref{a.7}. 
This allows us to apply this representation in our path-marked model and to study the subpopulations defined as individuals in maximal distance say $h$ for $h \in (0,t]$.
We can ask for the number of such families and their structure, see here also \cite{ggr_tvF14} for details on these objects, questions and results.
 \end{remark}
\subsection{Outlook and perspectives: open questions for genealogies}\label{ss.outquest}

We discuss here three directions of extensions for genealogical processes with values in equivalence classes of marked metric measure spaces which should be studied and resolved in the future:
\begin{itemize}
\item genealogies including the {\em fossils},
\item offspring laws with {\em fat tails}, 
\item genealogical processes in {\em continuum geographical space}.
\end{itemize}
The first point can be handled based on some work in progress the two others are open problem here some approaches we investigated but nothing complete exists so far.
\medskip

{\bf {\em (i) The genealogy including fossils and CRT}}

Another type of extension would be to consider the genealogy of all the individuals \emph{ever} alive before the current time $t$.
This is the population including the {\em fossils}.
Then with $t$ running through $(0,\infty)$ we obtain an evolving genealogy.
This will be a subset of the equivalence classes of treelike metric measure spaces.
This gives us an $\M$-valued stochastic process. 
Here again we have the (generalized) branching property, via the criterion.
In fact we could consider $ t \rightarrow \infty $ and take the complete genealogical tree of all individuals ever alive if the population becomes extinct we obtain a limit, see \cite{ggr_tvF14}.
This object has values in $ \M $ the space of equivalence classes of metric measure spaces.
For the Feller branching dynamic the equivalence class in $ \M $ has a representation which is known as the CRT, for the latter see \cite{Aldous90}, \cite{Ald1991}, \cite{Ald1991a}, \cite{Ald1993}, \cite{AldlGall}.

In that case we work with metric measure spaces of a specific form instead of {\em ultra}metric ones, which requires some \emph{new} elements.
The topology on the state space has been treated in this general form, but there is the issue of the dynamic.
First of all the corresponding processes have to be constructed with well-posed martingale problems and then the concatenation and truncation structures have to be introduced and then the criterion has to be checked.

The first point is treated in work in progress \cite{GSWfoss} the  state space is contained in the {\em rooted marked metric measure spaces}, for the state at time $ t $ with distance at most distance $ t $ from the root denoted $ \M_t $ and we comment here on the second point.

\label{3.3state}The state of time $ t $ is in $ \M_t $. Now the $ h- $truncation on $ \M_t $ removes all points in distance less than $ t-h $ from the root, and truncates distances at $ 2h $. 
The $ S_h $ consist of subspaces with points which are in distance at least $ t-h $ from the root and have at most the distance $ h $.
Accordingly $ M_t(h)^\sqcup $ are now the metric measure spaces with a root, points at most in distance $ t $ from the root but at least $ t-h $ from the root and other distances at most $ 2h $, analog only further than $ t-h $ from the root and of the distances less than $ 2h $ for $ \M_t(h) $.
The $ h- $ concatenation in $ \M_t(h)^\sqcup $ is defined as before.
For the dynamic the branching property follows from the criterion.
This induces a generalized branching property on the $ t \rightarrow \infty$ limit the $ \M $-valued version of the CRT.

{\bf {\em (ii) Branching with more general offspring distribution}}

A natural question is how we can treat the branching processes where we have as basis branching processes for an offspring distribution without higher than first moments.

In this case the total mass process does not have anymore all moments and we can not work as before with a martingale problem where the test functions are monomials, which was a key point in the proof of the previous results in~\cite{ggr_tvF14}, so that we are lacking at the moment a characterization by a well-posed martingale problem.
Here we now have to work with {\em local} martingale problems or work with Laplace functionals.

Another point is that now the resampling operator has to be replaced by an operator where in a sample not only distances to one point in the sample change but a whole random set of individuals arises now from one ancestor, the Kingman coalescence mechanism in the dual is replaced by a $ \Lambda- $coalescent type dual transition which would have to be identified, since in general this cannot be just a $\Lambda$-coalescent.
This means it requires \emph{very substantial work} to rigorously construct the $\U$-valued process via a well-posed martingale problem, even though no principal problems seem in the way.

{\bf {\em (iii) Genealogical and historical processes in continuum space}}

In the literature \cite{D77}, \cite{D93} one studies the {\em continuum space} analogue and limit of the super random walk respectively its historical process version the so called {\em  (historical) Dawson-Watanabe process}.
Here one would like to proceed similarly and introduce the {\em genealogy valued version} of these processes.
Here some problems arise at the starting point, namely to establish the wellposedness of the martingale problem.
To show uniqueness the most powerful tool is duality respectively Feynman-Kac duality as presented in Section of \cite{EK86}, which works with our approach only in the case of \textit{strongly recurrent migration},for example in $d=1$. 
Only in that case we can apply the technique of the \textit{Feynman-Kac duality} (a duality where an exponential functinal, as in equation \eqref{e1354} or \eqref{e2610}-\eqref{a2522} appears in the dual expectation, see here \cite{ggr_tvF14} for details) to obtain the needed uniqueness of the solutions of the martingale problem.
Furthermore due to the fact that the continuum space limits for the occupation measures are not given via SPDE's since the states are singular measures the existence problem is also more subtle since we cannot work with conditional dualities so easily (see \cite{ggr_tvF14} or \cite{DG03} for this concept).

In $ d \geq 2 $ the necessary uniform integrability in passing to the continuum space limit fails and the limit expression makes no sense since two path of the migration do not have a joint occupation time.
This is related to the fact that the population is now supported by a set of Hausdorff dimension less than $ d $ (for $ d=2 $, the picture being a bit more subtle) and here one would have to work with an {\em approximate duality}.
Hence we need here a different approach which remains to be developed.
However we can obtain from the historical Dawson-Watanabe process as a functional an  $\U^V$-valued process which then has the generalized branching property. 
However it should be possible to get then directly via the $\U^V$-valued martingale problem.
This is a general problem with continuum spaces and is addressed in forthcoming work in \cite{GSWfoss}.

\paragraph{Outline of the proof section}
In Section~\ref{ss.T:BRANCHING} we prove the criterion and then prove in Section~\ref{S.VERCRIT} that the example from Section~\ref{ss.trvalFD} fits in our framework and satisfies the criterion; this proof is based on some key facts derived beforehand in Section~\ref{ss.2104}-~\ref{ss.pop}.
In Section~\ref{ss.extmc} we give the extensions of the proofs to the spatial models.

\section{Proof of basic criterion: Theorem \ref{T:BRANCHING}}\label{ss.T:BRANCHING}

We prove separately the two parts of the Theorem~\ref{T:BRANCHING}. 

\textbf{\underline{ad \eqref{i:t:1}:}}
We saw in the introduction that the generalized branching property implies that the relation \eqref{eq:4.10.1} holds so we need only the other direction.

Let $t>0$ and $x=x_1 \sqcup^s x_2 \in \S$. Let $(X^{x_i}_t,t)_{t\geq
0}$ be solutions to the martingale problem for $(\tilde{A},\tilde{D},\delta_{(0,x_i)})$, $i=1,2$ 
and $X^{x_1} 
\independent X^{x_2}$. For $f\in \tilde{D}$, i.e.~$f(s,x) = \psi(s)h_s(x),\, s\geq0, \, x \in \S$ 
we will show that
 \begin{align}\label{eq:mg}
  { \left(\psi(t) h_t(X^{x_1}_t \sqcup^t X^{x_2}_t) - \psi(0) h_0(X^{x_1}_0 \sqcup^s X^{x_2}_0) - \int_0^t 
\tilde{A} \psi(r) h_r(X^{x_1}_r \sqcup X^{x_2}_r) \, \dx r \right)_{t \geq 0}}
 \end{align}
defines a martingale and, thus, $(X^{x_1}_t \sqcup^t X^{x_2}_t,t)_{t\geq0}$ 
is a \cadlag solution to the $(\tilde{A},\tilde{D},\delta_{(x_1\sqcup x_2,0)})$ martingale problem. 
Due to the uniqueness assumption for the martingale problem, this implies  $T_t X^{x_1\sqcup 
x_2}_t\overset{\textup{d}}{=} T_t (X^{x_1}_t\sqcup^t X^{x_2}_t)$. 
Then, we get the branching property for any $t\geq0$, $x_1,x_2 \in \S$:
\begin{equation}\label{e1337}
 P_t(x_1\sqcup^s x_2, h_t) \stackrel{\text{def}}{=} \E [h_t(X^{x_1 \sqcup^s x_2}_t)] 
= \E[ h_t(X^{x_1}_t \sqcup^t X^{x_2}_t) ] \stackrel{\text{def}}{=} \left( 
P_t(x_1,\cdot) \ast^t P_t(x_2,\cdot)\right) (h_t), \quad h_t \in D_t\,.
\end{equation}

It remains therefore verify to \eqref{eq:mg}. First set $\psi \equiv 1$.
After the argument it will become clear how to generalize. The proof follows that of Lemma 4.3.4 in \cite{EK86}:
By independence of the two processes for $t_2 > t_1 \geq0$:
\begin{align}\label{e1338}
 \E \left[ \left( h_{t_2}(X_{t_2}^{x_1}) - h_{t_1}(X_{t_1}^{x_1}) - \int_{t_1}^{t_2} \tilde{A} 
h_{r} 
(X_r^{(x_1)}) \, \dx r \right) h_{t_2}(X_{t_2}^{x_2})  | \mcF_{t_1} \right] = 0  \\
 \E \left[ \left( h_{t_2}(X_{t_2}^{x_2}) - h_{t_1}(X_{t_1}^{x_2}) - \int_{t_1}^{t_2} \tilde{A} 
h_{r} 
(X_r^{(x_2)}) \, \dx r \right) h_{t_1}(X_{t_1}^{x_1}) | \mcF_{t_1} \right] = 0, \label{e1338b}
\end{align}
using the filtration $(\mcF_t)_{t\geq0}$, the {\em joint} filtration of $X^{x_1}$ and $X^{x_2}$.
Combined we get
\begin{align}\label{e1339}
 \E & \left[ h_{t_2}(X_{t_2}^{x_1})  h_{t_2}(X_{t_2}^{x_2}) - 
h_{t_1}(X_{t_1}^{x_1})h_{t_1}(X_{t_1}^{x_2}) \right. \\
\label{e1339b} & \qquad \quad \left. - \int_{t_1}^{t_2} h_{t_2}(X_{t_2}^{x_2}) \tilde{A} h_{r} (X_r^{(x_1)})  + 
h_{t_1}(X_{t_1}^{x_1}) \tilde{A} h_{r} (X_r^{(x_2)})  \, \dx r | \mcF_{t_1} \right] = 0.
\end{align}
Using a partition of $[s,t]$, $s = t_0 < t_1 < \dotsm < t_n = t$ we get
\begin{align}\label{e1340}
 \E & \left[ h_{t}(X_{t}^{x_1})  h_{t}(X_{t}^{x_2}) - h_{s}(X_{s}^{x_1})h_{s}(X_{s}^{x_2}) \right. 
\\
 & \qquad \quad \left. - \int_{s}^{t}\tilde{A} h_{r} (X_r^{(x_1)}) h_{r}(X_{r}^{x_2}) + 
h_{r}(X_{r}^{x_1}) \tilde{A} h_{r} (X_r^{(x_2)})  \, \dx r | \mcF_{s} \right] \label{e1340b}\\
& + \sum_{i=1}^n \int_{t_{i-1}}^{t_i} \E \left[ (h_{t_i}(X_{t_i}^{x_2}) -h_r(X_r^{x_2})) \tilde{A}h_r(X_r^{x_1}) + (h_{t_i}(X_{t_i}^{x_1}) - h_r(X_r^{x_1}))  \tilde{A}h_r(X_r^{x_2}) | \mcF_{s} \right] \, \dx r  = 0. \label{e1340c}
\end{align}
Denote by $ F_1(r), F_2(r) $ the functions $ r \rightarrow E[h_r(X^{x_2}_r) \wt Ah_r(X^{x_1}_r) \mid \CF_s] $ respectively with $ x_1,x_2 $ interchanged.
We have here the expression $ h_r(X_s)-h_t(X_u) $, which is evaluated for $ u=s $ and $ r=t $ and hence we need properties of the two compared functions.
Observe that for $\max |t_{k+1}-t_k| \to 0$ the continuity assumptions on $r\mapsto h_r$ and $r \mapsto X_r$ imply that the {\em last term vanishes} leading to \eqref{eq:mg} via \eqref{eq:4.10.1}.
Namely we observe that for as $ max_k \mid t_{k+1} - t_k \mid \rightarrow 0 $ the functions $ F^1, F^2 $ are approximated in $ L_1  $ by $ \wt F^1, \wt F^2 $ given by replacing $ h_r $ by $ h_{t_i} $ in the interval $ [t_i,t_{i+1}] $.
This follows from the continuous differentiability of $ r \rightarrow h_r  $ we assumed and the stochastic continuity of $ X $, together with the conditional independence of $ X^{x_1} $ and $ X^{x_2} $ which allows to rewrite the first expression of the integral in the third term now as:
$ \E [h(X^{x_2}_{t_i})-h_r(X_r^{x_2}) \mid \CF_s] \E [Ah_r(X_r^{x_2}) \mid \CF_s] $ and similar for the second term making the claim immediate.
 
For general $\psi$ the argument proceeds starting in \eqref{e1338} replacing $h_\cdot$ by $\psi(\cdot) h_\cdot$.
\medskip

\textbf{\underline{ad \eqref{i:t:2}:}}
This result is a corollary to \eqref{i:t:1} and it suffices to verify \eqref{eq:4.10.1} having \eqref{eq:lin:gen}.
Drop in $g_{\psi,h}$ the indices and calculate:
 \begin{align}\label{e1341}
  \tilde{A} \psi(t) h_t(x_1 \sqcup x_2) & = \psi'(t) h_t(x_1 \sqcup x_2) + \psi(t) g(t,x_1 \sqcup 
x_2) h_t(x_1 \sqcup x_2) \\
  & = \psi'(t) h_t(x_1 \sqcup x_2) + \psi(t) (g(t,x_1) + g(t, x_2)) h_t(x_1)  h_t( x_2) \label{e1341b}\\
  & = \psi'(t) h_t(x_1) h_t(x_2) + \psi(t) \left( A h_t(x_1) h_t(x_2) + h_t(x_1) Ah_t(x_2) \right).\label{e1341c}
 \end{align}
On the other hand having the branching property we set $g_{\psi,h}=(Ah_t)/h_t \cdot \psi(t)$ to obtain a homomorphism using the multiplicity of $h_t$ the only point to check is that the expression is well defined for a multiplicative function $h_t$, which is the case for $h_t(\cdot) >0$.

%
%

\section{Formulation and proofs of key facts to be used in Section~\ref{S.VERCRIT}} \label{s1501141604b}

In this section we formulate and prove the statements which give the \emph{key tools} used subsequently in our argument that the assumptions needed to apply the \emph{criterion} indeed do hold, see the next section.
In the following proofs we will use the notation $x_1 \sqcup x_2$ to denote the generic $x_1 \sqcup^s x_2$ which is required for the branching property.

\subsection{Formulation of the key tools}\label{ss.2104}
{\em The key point is to verify the uniqueness property of the martingale problem, to choose $ \wt D $ and to calculate the $ g $ in our criterion and show its $ \sqcup- $ additivity} (Step 3).
Everything is put together in Step 4.
The truncation necessary to define the branching properties raises some technical problems in applying stochastic analysis tools. 
Therefore before carrying out the proof of Theorem~\ref{T.TVF.BRAN.PROP}  and working with the duality techniques in Section~\ref{s1501141604b} to establish that we have a well-posed martingale problem we need some preparations (Step 1 and 2).

\paragraph{{\em Step 1: Preparations}} \; 
To prepare the proof of Theorem~\ref{T.TVF.BRAN.PROP} we state some results which are proven later in Section~\ref{ss:pr:dual}.
We recall the notation $\bbD_n = \{\dr \in \R^{n \choose 2}:\, 0 \leq r_{ik} \leq r_{ij} + r_{jk},\, 1\leq i < j < k \leq n\}$ as the subset of mutual distances which can be realized by $n$ points in a metric space, $n \in \N$.
By convention $\bbD_1 = \{0\}$.

We will need in the sequel a function, $ \varrho $ : $ \R \times \bigcup\limits_n \bbD_n \rightarrow \R $, which generates a sliding window of truncation which filters out the information in the cut out pieces, like the truncation operator, but which has {\em smoothness properties} which allow for the calculus of martingale problems. 
We use the notation $\varrho_t(\dr) = \varrho^{(n)}(t,\dr)$, $t \in \R$, $\dr \in \bbD_n$, $n\in \N$.
 
\begin{lemma}\label{L.EX}
Assume that $\varrho^{(n)} \in C^1(\R \times \bbD_n, [0,1])$, $n \in \N$. The process $(t,\mfU_t)_{t\geq 0}$ with $(\mfU_t)_{t \geq 0}$ from Definition~\ref{d.treeval} is a solution to the $(\tilde{\Omega}, D_{\textup{lin}})$ martingale problem, where
 \begin{align}\label{e1316}
  \tilde{\Omega} & = \Omega^{\uparrow} + \partial_t ,\\
  D_{\textup{lin}} & = \{ (t,\mfu) \mapsto \psi(t) \Phi^{n,\phi \varrho_t} (\mfu)\, : \ \psi \in C_b^1(\R,\R),\, 
\phi \in C_b^1 \}.\label{e1316b}
 \end{align}
 Note that $D_{\textup{lin}}$ depends on $\varrho$, but we do not explicitly state that dependence. \qed
\end{lemma}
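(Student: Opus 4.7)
The plan is to upgrade the $(\Omega^{\uparrow},\Pi(C_b^1))$-martingale problem for $(\mfU_t)_{t \geq 0}$ (Definition~\ref{d.treeval}) to the time-inhomogeneous domain $D_{\textup{lin}}$ by a standard passage-to-the-time-space-process argument. First I would observe that for \emph{each fixed} parameter $s \geq 0$, the function $\dr \mapsto \phi(\dr)\,\varrho^{(n)}(s,\dr)$ lies in $C_b^1(\bbD_n,\R)$ by the assumed $C^1$-smoothness of $\varrho^{(n)}$ and of $\phi$. Hence $\mfu \mapsto \Phi^{n,\phi \varrho_s}(\mfu)$ belongs to $\Pi(C_b^1)$, and the hypothesis on $\mfU$ provides that, for each such $s$, the process
\begin{equation*}
 r \mapsto \Phi^{n,\phi \varrho_s}(\mfU_r) - \Phi^{n,\phi \varrho_s}(\mfU_0) - \int_0^r \Omega^{\uparrow}\Phi^{n,\phi \varrho_s}(\mfU_u)\,\dx u
\end{equation*}
is a martingale in $r$. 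This is the only place where the known well-posed martingale problem is used.

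Next, I would fix $t > 0$ and a partition $0 = t_0 < \dotsb < t_N = t$ with mesh $\delta$, and telescope
\begin{equation*}
 \psi(t)\,\Phi^{n,\phi \varrho_t}(\mfU_t) - \psi(0)\,\Phi^{n,\phi \varrho_0}(\mfU_0) = \sum_{i=0}^{N-1} \bigl(I_i^{\textup{sp}} + I_i^{\textup{tm}}\bigr),
\end{equation*}
where $I_i^{\textup{sp}} := \psi(t_{i+1})\bigl[\Phi^{n,\phi \varrho_{t_{i+1}}}(\mfU_{t_{i+1}}) - \Phi^{n,\phi \varrho_{t_{i+1}}}(\mfU_{t_i})\bigr]$ and $I_i^{\textup{tm}} := \psi(t_{i+1})\Phi^{n,\phi \varrho_{t_{i+1}}}(\mfU_{t_i}) - \psi(t_i)\Phi^{n,\phi \varrho_{t_i}}(\mfU_{t_i})$. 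The spatial increments $I_i^{\textup{sp}}$ are handled by the frozen-parameter martingale property above at $s = t_{i+1}$: subtracting the compensator $\int_{t_i}^{t_{i+1}} \psi(t_{i+1})\,\Omega^{\uparrow}\Phi^{n,\phi \varrho_{t_{i+1}}}(\mfU_u)\,\dx u$ yields a martingale difference w.r.t.~$\mcF_{t_i}$. The temporal increments $I_i^{\textup{tm}}$, with $\mfU_{t_i}$ frozen, are deterministic $C^1$-functions of the parameter and satisfy $I_i^{\textup{tm}} = \int_{t_i}^{t_{i+1}} \partial_s\bigl[\psi(s)\Phi^{n,\phi \varrho_s}(\mfU_{t_i})\bigr]\,\dx s$.

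Refining the partition and letting $\delta \to 0$, the sum of compensators converges to $\int_0^t \psi(s)\,\Omega^{\uparrow}\Phi^{n,\phi \varrho_s}(\mfU_s)\,\dx s$, and the sum of temporal pieces converges to $\int_0^t \partial_s\bigl[\psi(s)\Phi^{n,\phi \varrho_s}\bigr](\mfU_s)\,\dx s$. Putting both limits together yields that $\psi(t)\Phi^{n,\phi \varrho_t}(\mfU_t) - \psi(0)\Phi^{n,\phi \varrho_0}(\mfU_0) - \int_0^t \tilde{\Omega}\bigl[\psi(s)\Phi^{n,\phi \varrho_s}\bigr](\mfU_s)\,\dx s$ is a martingale, i.e.\ $(t,\mfU_t)_{t\geq 0}$ solves the $(\tilde{\Omega},D_{\textup{lin}})$-martingale problem.

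The main obstacle is the uniform control required to identify the two Riemann-sum limits with the claimed Lebesgue integrals. The branching part $\Omega^{\uparrow,\textup{bran}}$ carries a prefactor $b/\bar{\mfu}$ which becomes singular as $\bar{\mfU}_r \downarrow 0$, and the integrands depend both on the random state $\mfU_r$ and on the deterministic smooth family $\varrho_s$. I would handle this by localising at stopping times $\tau_\varepsilon = \inf\{r:\bar{\mfU}_r \leq \varepsilon\} \wedge t$, using the known moment bounds of the $\R_+$-valued Feller diffusion $(\bar{\mfU}_r)_{r \geq 0}$ on $[0,t]$ together with the joint $C^1$-regularity of $(s,\dr) \mapsto \varrho^{(n)}(s,\dr)$ and the \cadlag regularity of $\mfU$, to get the uniform integrability needed for the limit identification; finally one removes the localisation by sending $\varepsilon \downarrow 0$. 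The rest of the argument is routine book-keeping in the spirit of Lemma~4.3.4 of \cite{EK86}.
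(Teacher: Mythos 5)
Your proposal is correct and follows essentially the same route as the paper: the paper verifies conditions (4.3.10)--(4.3.11) of Lemma~4.3.4 in \cite{EK86} (frozen-parameter martingale property in the state variable, ordinary calculus in the time parameter, plus moment bounds on the total-mass Feller diffusion to replace boundedness), and your telescoping partition argument is precisely the proof of that lemma unfolded. The only cosmetic difference is your localisation at $\tau_\varepsilon$ for the $b/\bar{\mfu}$ prefactor, which is not actually needed since $\Phi^{n,\phi\circ\theta_{k,l}}(\mfu)/\bar{\mfu}$ is of order $\bar{\mfu}^{n-1}$ and the generator is set to $0$ at the zero tree; the genuine integrability issue is the unboundedness of polynomials for large $\bar{\mfu}$, which your moment-bound step already covers.
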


The lemma will be proved in Section~\ref{ss:pr:dual}.
The next lemma shows that another domain of the operator can be chosen giving an equivalent martingale problem.

\begin{lemma}\label{l.D_lin:tildeD} The following are equivalent:
\begin{enumerate}
\item\label{i.tr311} $(t,\mfU_t)_{t\geq0}$ solves the  $(\tilde{\Omega}, D_{\textup{lin}})$-MGP
\item\label{i.tr312} $(t,\mfU_t)_{t\geq0}$ solves the $(\tilde{\Omega}, \tilde{D})$-MGP, where
 \begin{equation}\label{e.tr6}
  \tilde{D} = \{(t,\mfu) \mapsto \psi(t) \exp (-\Phi^{n,\phi\varrho_t}(\mfu)) :\, \psi \in C^1_b(\R), 
 \, \phi \in C_b^1(\bbD_n),\, n \in\N \}.
 \end{equation}
\end{enumerate}
Note that $\tilde{D}$ depends on $\varrho$, but we do not explicitly state that dependence. \qed
\end{lemma}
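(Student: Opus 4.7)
The plan is to prove both implications by chain-rule arguments linking the polynomials $\Phi^{n,\phi\varrho_t}$ to their exponential counterparts $\exp(-\Phi^{n,\phi\varrho_t})$. Both $D_{\textup{lin}}$ and $\tilde{D}$ consist of expressions of the shape $\psi(t)\,G(\Phi^{n,\phi\varrho_t}(\mfu))$, with $G(x)=x$ in the linear case and $G(x)=e^{-x}$ in the exponential case. Polynomials and exponentials are linked by differentiation in a scale parameter (since $-\partial_\lambda e^{-\lambda x}|_{\lambda=0}=x$) and, conversely, $e^{-x}$ admits a convergent Taylor expansion in $x$. Thus the two families of test functions determine each other at the level of expectations once sufficient integrability is available, and the martingale properties should transfer accordingly.

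For the direction \ref{i.tr311}$\Rightarrow$\ref{i.tr312}, first I would fix $n$, $\phi$ and $\psi$, and by choosing $\psi\equiv 1$ in the assumed martingale problem, read off that $X_t:=\Phi^{n,\phi\varrho_t}(\mfU_t)$ is a real-valued semimartingale with drift $\tilde{\Omega}[\Phi^{n,\phi\varrho_\cdot}](\mfU_\cdot)$. Next I would apply It\^o's formula to $X_t$ with the outer function $F(t,x)=\psi(t)e^{-x}$, using the key identity $(\Phi^{n,\phi\varrho_t})^{2}=\Phi^{2n,(\phi\varrho_t)^{\otimes 2}}$ to express the quadratic variation of $X$ through the carr\'e-du-champ $\Gamma[\Phi^{n,\phi\varrho_t}]=\tilde{\Omega}[(\Phi^{n,\phi\varrho_t})^{2}]-2\Phi^{n,\phi\varrho_t}\tilde{\Omega}[\Phi^{n,\phi\varrho_t}]$, which is again a polynomial expression covered by assumption \ref{i.tr311}. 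Matching the resulting It\^o decomposition of $F(t,X_t)$ against the formal chain-rule expansion of $\tilde{\Omega}[\psi(t)e^{-\Phi^{n,\phi\varrho_t}(\mfu)}]$ then yields the martingale identity required by \ref{i.tr312}.

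For the converse \ref{i.tr312}$\Rightarrow$\ref{i.tr311}, the cleanest route is to note that $\exp(-\Phi^{n,\lambda\phi\varrho_t}(\mfu))$ lies in $\tilde{D}$ for every $\lambda\geq 0$, and to differentiate the corresponding martingale identity in $\lambda$ at $\lambda=0$; this produces exactly the martingale associated with $\Phi^{n,\phi\varrho_t}$ demanded by \ref{i.tr311}. The exchange of $\partial_\lambda$ with the conditional expectations in the martingale identity is justified by dominated convergence once we have uniform moment bounds on compact time intervals. Such bounds follow from the pointwise estimate $|\Phi^{n,\phi\varrho_t}(\mfU_t)|\leq \|\phi\|_\infty\,\bar\mfU_t^{\,n}$ together with the fact that $(\bar\mfU_t)_{t\geq 0}$ is the classical $\R_+$-valued Feller diffusion and therefore admits finite moments of every order on bounded intervals.

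The hard part will be the careful bookkeeping of these interchanges, that is, verifying that $\tilde{\Omega}$ acting on either $\psi(t)e^{-\Phi^{n,\phi\varrho_t}(\mfu)}$ or on its Taylor coefficients $\Phi^{nk,(\phi\varrho_t)^{\otimes k}}$ produces expressions whose expectations are locally integrable, so that differentiation in $\lambda$, termwise summation in the Taylor series and the martingale property can all be exchanged. The smoothness assumption $\varrho^{(n)}\in C^1(\R\times\bbD_n,[0,1])$ together with $\phi\in C_b^1$ guarantees that $\tilde{\Omega}$ is well defined on both domains and that all integrands arising from the action of $\Omega^{\uparrow,\textup{grow}}$ (through $\overline{\nabla}\phi$) and $\Omega^{\uparrow,\textup{bran}}$ (through $\phi\circ\theta_{k,l}$) remain dominated by polynomials in $\bar\mfU_t$; combined with the Feller moment bounds, this supplies the uniform integrability needed to close both directions by standard semimartingale calculus.
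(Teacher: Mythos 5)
Your proposal is correct and follows essentially the same route as the paper: the forward direction is done by It\^o's formula applied to the exponential of the polynomial, with the quadratic variation handled through the compensator of $(\Phi^{n,\phi\varrho_t})^2$, which is again a polynomial of degree $2n$ in the domain; the converse is obtained by differentiating the martingale identity for $\psi(t)\exp(-\Phi^{n,\lambda\phi\varrho_t})$ in $\lambda$ at $\lambda=0$, with integrability supplied by the moment bounds on the total-mass Feller diffusion.
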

This lemma is important since it allows to work with multiplicative functions as required in Theorem~\ref{T:BRANCHING}.

\paragraph{\em Step 2: Sliding window of functions}
Return to the functions $ \varrho $ from above and specialize to the present context.

We define the $ t- $truncated polynomial:
 \begin{equation}\label{e1319}
  \Phi^{n,\phi}_t(\mfu) = \Phi^{n, \phi \cdot c_t}(\mfu) \,, \text{ where } c_t(\dr) = \prod_{1\leq i< j \leq n} \1(r_{ij} < 2t ) \, 
 \end{equation}
 for $\Phi^{n,\phi} \in \Pi$.
 Truncated polynomials are additive on $\U(t)^\sqcup$ (see Theorem~\ref{Inf-p.trunc.poly} in \cite{infdiv}) which makes $\tilde{D}$ a set of {\em multiplicative} functions on $\U(t)^\sqcup$.
 Unfortunately, truncated polynomials $\Phi^{n,\phi}_t$ do not have $C^1$-functions $\phi c_t$ and that is why we use an \emph{approximation argument} which makes use of the following assumptions on $\varrho$.

\begin{assumption}\label{a.1}
 The functions $\varrho^{(n)}: \R \times \bbD_n \to [0,1]$ are in $C^1$ and of the form that for $t \in \R, \dr \in  \bbD_n$:
 \begin{equation}\label{e.tr5} 
  \varrho^{(n)}(t,\dr) = 0 \text{ if and only if there is }1\leq i< j \leq n \text{ with  }r_{ij} \geq2t.
 \end{equation}
 Moreover, we require that $\varrho^{(n)}(t,\cdot)$ is non-increasing in any coordinate for any $t \geq0$. \qed
\end{assumption}
Recall the notation $\underline{\underline{t}} = (t)_{1\leq i < j \leq n} \in \bbD_n$, for an array with the constant entry $t$ in all ${n \choose 2}$ coordinates.
We will not specify the dimension of the array in order not to overload notation.
\begin{assumption}\label{a.2}
 For any $n \in \N$, $ c \geq0$ : $\varrho^{(n)}(t,\dr) = \varrho^{(n)}(t+c,\dr + 2 \underline{\underline{c}})$, $t\in \R, \dr \in \bbD_n$, $n\in \N$. \qed
\end{assumption}
Under these assumptions we get the {\em approximation property} below.
\begin{lemma}\label{l.tr2}
 Let $t>0$. Suppose Assumptions~\ref{a.1} and~\ref{a.2} hold for functions $\varrho = \varrho^{(n)} : 
\R \times \bbR^{n\choose 2} \to [0,1]$, $n\in \N$. 
Then for any truncated polynomial 
$\Phi_t^{n,\phi} \in \Pi(C^1(\D_n))$, we can find a sequence of polynomials $\Phi^{n,\phi_N \varrho_t}$ in the 
family of polynomials $\{\Phi^{n,\phi \varrho_t}: \, \phi \in C_b^1(\bbD_n), n \in \N\}$ such that for 
all $\mfu \in \U$:
 \begin{equation}\label{e1318}
   \Phi^{n, \phi_N \varrho_t} (\mfu) \nearrow \Phi_t^{n,\phi}(\mfu)\,, \text{ as } N \to \infty.
 \end{equation} \qed
\end{lemma}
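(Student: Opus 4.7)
The strategy is to explicitly invert $\varrho_t$ on the open set where it is strictly positive, by inserting a smooth one-variable cutoff, so that $\phi_N \varrho_t$ becomes a $C^1$-approximation of the coefficient $\phi \cdot c_t$ of the truncated polynomial. The delicate point will be to preserve the $C^1$-regularity of $\phi_N$ near the boundary of the support of $\varrho_t$, where the naive quotient $\phi/\varrho_t$ blows up.

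Fix $t > 0$. Assumption~\ref{a.1} shows that $\varrho_t = \varrho^{(n)}(t, \cdot)$ is $[0,1]$-valued and of class $C^1$ on $\bbD_n$, vanishes precisely on the closed set $K_t = \{\dr \in \bbD_n : r_{ij} \geq 2t \text{ for some } i < j\}$, and is strictly positive on the open complement $O_t = \{\dr : r_{ij} < 2t \text{ for all } i < j\}$; in particular $c_t = \mathbbm{1}_{O_t}$. Next, pick a non-decreasing family of smooth cutoffs $\chi_N: [0,\infty) \to [0,1]$ with $\chi_N \equiv 0$ on $[0, 1/(N{+}1)]$, $\chi_N \equiv 1$ on $[1/N, \infty)$, and $\chi_N \nearrow \mathbbm{1}_{(0,\infty)}$ pointwise. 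Define
\[
  \phi_N(\dr) \;\coloneqq\; \phi(\dr) \cdot \frac{\chi_N(\varrho_t(\dr))}{\varrho_t(\dr)},
  \qquad \text{with the convention } 0/0 = 0.
\]
Since $\chi_N \circ \varrho_t$ vanishes on the open neighborhood $\{\varrho_t < 1/(N{+}1)\}$ of $K_t$ while $\varrho_t \geq 1/(N{+}1)$ on its support, the quotient is bounded and $C^1$ on $\bbD_n$, so $\phi_N \in C_b^1(\bbD_n)$. By construction $\phi_N \, \varrho_t = \phi \cdot (\chi_N \circ \varrho_t)$, which converges pointwise to $\phi \cdot c_t$ and does so monotonically whenever $\phi \geq 0$.

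To pass to the polynomial level, fix $\mfu = [U, r, \mu] \in \U$; since $\mu(U) < \infty$, the product measure $\mu^{\otimes n}$ on $U^n$ is finite, and $|\phi_N \varrho_t| \leq \|\phi\|_\infty$. For $\phi \geq 0$, monotone convergence applied to the integrand therefore yields
\[
  \Phi^{n, \phi_N \varrho_t}(\mfu) \;=\; \int_{U^n} \phi_N\bigl((r(x_i, x_j))_{i<j}\bigr)\, \varrho_t\bigl((r(x_i,x_j))_{i<j}\bigr) \, \mu^{\otimes n}(\dx x_1 \cdots \dx x_n)\; \nearrow \;\Phi_t^{n,\phi}(\mfu).
\]
A general $\phi \in C^1(\bbD_n)$ reduces to the non-negative case by the decomposition $\phi = (\phi + M) - M$ with $M \coloneqq \|\phi\|_\infty$, applying the construction separately to the two non-negative $C^1$-coefficients; this presents $\Phi_t^{n,\phi}(\mfu)$ as a difference of monotone limits of polynomials in the allowed family.

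The main obstacle — the loss of $C^1$-regularity of the formal inverse $1/\varrho_t$ near $\partial O_t$ — is resolved by the fact that the cutoff $\chi_N$ forces $\phi_N$ to vanish on a full open neighborhood of $\{\varrho_t = 0\}$, so no boundary behaviour of $\varrho_t$ ever enters. Assumption~\ref{a.2} plays no direct role in this approximation; it only ensures compatibility of the family $(\varrho^{(n)}(t,\cdot))_{t\geq 0}$ under time translations, which is used elsewhere in the construction of $D_{\textup{lin}}$ and $\tilde{D}$.
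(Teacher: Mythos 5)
Your proof is correct and follows essentially the same route as the paper: both define $\phi_N$ as $\phi/\varrho_t$ multiplied by a smooth cutoff vanishing near the zero set of $\varrho_t$, so that $\phi_N\varrho_t$ increases pointwise to $\phi\cdot c_t$ and the claim follows by monotone (or dominated) convergence of the finite integrals $\mu^{\otimes n}$. The only difference is the choice of cutoff: the paper uses the coordinatewise product $\prod_{1\leq i<j\leq n} g_N(2t-r_{ij})$, which vanishes only on $\{\varrho_t=0\}$ itself and therefore needs a (tersely justified) argument that the quotient still extends to a bounded $C^1$ function at the boundary, whereas your composition $\chi_N\circ\varrho_t$ annihilates a full open neighbourhood of $\{\varrho_t=0\}$, making $\phi_N\in C^1_b$ immediate --- and your explicit reduction of signed $\phi$ to the non-negative case addresses the monotonicity claim more carefully than the paper does.
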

\begin{proof}[Proof of Lemma~\ref{l.tr2}]\label{pr.l.tr2}
 Recall that $(\phi\cdot \varrho_t) (\dr) = \phi(\dr) \cdot \hat{\varrho}(\dr -2 \underline{\underline{t}})$ by Assumption~\ref{a.2}.
 Let $g_N \in C^1(\R,\R)$ with $g_N|_{[-\infty, 0]} \equiv 0$ and $g_N|_{[N^{-1},\infty)} \equiv 1$.
 Define
 \begin{equation}\label{e1320}
  \phi_N (\dr ) = \frac{\phi(\dr)}{\varrho_t(\dr)} \cdot \prod_{1\leq i<j \leq n} g_N(2t - r_{ij})  \, .
 \end{equation}

 Clearly, $\phi_N \in C^1_b(\bbD_n)$ for any $N \in \N$.  
 This is the case since $\varrho^{(n)}(t,\cdot)$ is decreasing by Assumption~\ref{a.1}.
 With a similar argument as that of {\em Lemma~\ref{Inf-l.tr1}} in \cite{infdiv} we can see that $\Phi^{n,\phi_N \cdot \varrho_t} (\mfu) \to \Phi^{n,\phi\cdot h_t} (\mfu) $ as $N \rightarrow \infty) $.
\end{proof}

This completes our technical preparations and we have to check later the basic assumptions to work with our approach in this model.

\begin{proposition}[Feynman-Kac duality and uniqueness~\cite{ggr_tvF14}]\label{p.dual.ext}
\mbox{}\\
Under Assumptions~\ref{a.1} and~\ref{a.2} there is a dual process for the process 
$(t, \mfU_t)_{t\geq0}$ with a Feynman-Kac duality relation. If the initial condition $\P [\mfU_0 \in \cdot]$ is deterministic, then uniqueness holds for the $(\tilde{\Omega}, \tilde{D})$ martingale problem 
in the sense that for any other solution $(\mfU_t',t)_{t\geq0}$ we have 
$\mfU_t(t)\eqd\mfU_t'(t)$, for every $t>0$. \qed
\end{proposition}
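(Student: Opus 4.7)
The plan is to extend the Feynman-Kac duality established in~\cite{ggr_tvF14} for ordinary polynomial test functions $\Phi^{n,\phi}$ to the time-dependent smoothed test functions $(t,\mfu)\mapsto\psi(t)\Phi^{n,\phi\varrho_t}(\mfu)$ populating $D_{\textup{lin}}$ (equivalently $\tilde D$ via Lemma~\ref{l.D_lin:tildeD}). The key point is a structural cancellation enforced by Assumption~\ref{a.2}: the $\partial_t$-action on $\varrho_t$ exactly compensates the growth-operator action on $\varrho_t$, so that $\tilde\Omega$ applied to an element of $D_{\textup{lin}}$ reduces to the standard polynomial operator action with $\phi\varrho_t$ playing the role of the base function, plus the boundary term coming from $\psi'(t)$. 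Once this is in place, the dual can be copied verbatim from~\cite{ggr_tvF14}.

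Concretely, the growth part produces $\Phi^{n,2\overline{\nabla}(\phi\varrho_t)}(\mfu) = 2\Phi^{n,(\overline{\nabla}\phi)\varrho_t + \phi\overline{\nabla}\varrho_t}(\mfu)$, while Assumption~\ref{a.2} differentiated in its shift parameter at zero yields $\partial_t\varrho_t = -2\overline{\nabla}\varrho_t$, which when paired with $\Phi^{n,\phi\,\partial_t\varrho_t}(\mfu)$ kills the $\phi\overline{\nabla}\varrho_t$ contribution. What remains is
\begin{align*}
\tilde\Omega\big(\psi\,\Phi^{n,\phi\varrho_t}\big)(t,\mfu) &= \psi'(t)\Phi^{n,\phi\varrho_t}(\mfu) + 2\psi(t)\Phi^{n,(\overline{\nabla}\phi)\varrho_t}(\mfu) \\
&\quad + \psi(t)\Big(an\,\Phi^{n,\phi\varrho_t}(\mfu) + \frac{b}{\bar\mfu}\sum_{1\leq k<l\leq n}\Phi^{n,(\phi\varrho_t)\circ\theta_{k,l}}(\mfu)\Big),
\end{align*}
structurally identical to the plain polynomial action with base $\phi\varrho_t$. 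The dual from~\cite{ggr_tvF14} is then a Kingman-type coalescent $(K_s)_{0\leq s\leq t}$ started with $n$ blocks, with pairwise distances growing deterministically at rate $2$, coalescence rate $b/\bar\mfu_{t-s}$ per pair conditional on the total mass trajectory of $\mfU$, and Feynman-Kac correction $F_t=\exp\!\big(a\int_0^t K_s\,\dx s\big)$. This yields the duality identity
\[
\E\!\big[\psi(t)\,\Phi^{n,\phi\varrho_t}(\mfU_t)\,\big|\,\mfU_0=\mfu_0\big]=\psi(t)\,\E^{\textup{dual}}_n\!\big[\Phi^{K_t,\zeta_t}(\mfu_0)\,F_t\big],
\]
where $\zeta_t$ denotes the coalescent-evolved image of $\phi\varrho_t$ under the dual flow.

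Uniqueness then follows. The duality identity pins down $\E[\Phi^{n,\phi\varrho_t}(\mfU_t)]$ for every admissible pair $(\phi,\varrho)$ from the deterministic initial condition $\mfu_0$. By Lemma~\ref{l.tr2}, the truncated polynomial $\Phi^{n,\phi}_t(\mfU_t)$ is the monotone limit of $\Phi^{n,\phi_N\varrho_t}(\mfU_t)$, so monotone convergence determines $\E[\Phi^{n,\phi}_t(\mfU_t)]$ for every $\phi\in C_b^1(\bbD_n)$ and every $n$. Since the $t$-truncated polynomials are precisely the polynomial moments of $T_t\mfU_t$, and these separate laws on $\U(t)^\sqcup$ by the Gromov-weak moment theorem for ultrametric measure spaces (cf.~\cite{GPW09,Gl12}), we conclude $T_t\mfU_t\eqd T_t\mfU_t'$, which is the asserted equality. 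The main obstacle is not the insertion of $\varrho_t$ -- the cancellation above makes that essentially free -- but the singular factor $\bar\mfu_s^{-1}$ entering the coalescence rate near extinction of the total mass; controlling the integrability of the dual expression on the event $\{\bar\mfu_s\to 0\}$ is the delicate step already carried out in~\cite{ggr_tvF14}, while the $C^1$-smoothness built into $\varrho_t$ by Assumption~\ref{a.1}, together with Lemma~\ref{L.EX}, is what allows Dynkin's formula to be applied rigorously at every stage of the argument.
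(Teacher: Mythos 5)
Your overall architecture is the paper's: the cancellation $(\partial_t+2\overline{\nabla})\varrho_t=0$ from Assumption~\ref{a.2} (the paper's Lemma~\ref{l.tr3}) reduces $\tilde\Omega$ on $D_{\textup{lin}}$ to the plain polynomial action with base $\phi\varrho_t$, a coalescent dual is imported, and uniqueness follows via Lemma~\ref{l.tr2} and monotone convergence. However, your specification of the dual — the heart of the proposition — is wrong in a way that breaks the generator relation. The paper's dual (Lemmas~\ref{lem:K+time:ex} and~\ref{l.tr1}) is an \emph{autonomous} Kingman coalescent with \emph{constant} pairwise coalescence rate $b$, not a coalescent at rate $b/\bar\mfu_{t-s}$ conditioned on the total mass trajectory. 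The reason the $1/\bar\mfu$ disappears is structural: $\phi\circ\theta_{k,l}$ does not depend on the $l$-th sampled point, so $\Phi^{n,\phi\circ\theta_{k,l}}(\mfu)=\bar\mfu\cdot\Phi^{n-1,\,\cdot}(\mfu)$, and the factor $\bar\mfu$ gained by dropping one integration variable cancels the $b/\bar\mfu$ in $\Omega^{\uparrow,\mathrm{bran}}$. This is exactly what makes the "singularity near extinction" you flag a non-issue here. Moreover, your Feynman--Kac correction $\exp(a\int_0^tK_s\,\dx s)$ accounts only for the drift term $an\Phi^{n,\phi}$ (which the paper switches off by taking $a=0$ in this computation); the essential potential is the one generated by the branching operator having \emph{no} compensating $-\Phi^{n,\phi}$ term, namely
\begin{equation*}
\Omega^{\uparrow,\mathrm{bran}}H \;=\; L^{\downarrow,\mathrm{coal}}H+b\binom{\#p}{2}H,
\end{equation*}
which forces the FK weight $\exp\bigl(\int_0^t\binom{\#p_s}{2}\,\dx s\bigr)$ of \eqref{e1354}. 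With your rate choice and your FK term, the generator criterion (4.4.41) of \cite{EK86} does not close: you would additionally need the potential $\tfrac{b}{\bar\mfu_{t-s}}\binom{K_s}{2}$, and you would then be proving a conditional duality, which requires a separate argument that the law of the total-mass path is already determined.

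Two smaller points. First, the duality function must carry the sliding window in \emph{both} time variables, $\varrho\bigl(2(\underline{\underline{t}}+\underline{\underline{s}})-\dr^p(\ux_p)-\dr'\bigr)$ as in \eqref{e1348}, so that $\partial_s$ on the dual side matches $\partial_t$ on the forward side; your "$\zeta_t$" gestures at this but it should be made explicit. Second, in the uniqueness step, equality of all truncated moments does not by itself give $T_t\mfU_t\eqd T_t\mfU_t'$ on a state space with unbounded total mass: one needs the Carleman-type moment-growth control of Lemma~\ref{lem:Pi:separating} together with the Feller-diffusion bound $\E[\bar\mfU_t^m]\le c(t)(\mbfP_0[\bar\mfU_0^m]+1)$ to conclude that the algebra of polynomials is separating. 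Citing the moment theorem of \cite{GPW09} alone (which is for probability measure spaces) is not sufficient.
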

This means that any two solutions at time $t$ have the same $t$-top.
One can see that as a one-dimensional uniqueness result. 
It is not surprising that we do not obtain a finer result: Assumption~\ref{a.1} cuts off information beyond that level.
The uniqueness result also holds more general if we require moment bounds on the initial conditions.

Next, the formula for $ g $ and the important property of $\sqcup^t$-additivity is stated in the next proposition. This is the key for the proof of Theorem~\ref{T.TVF.BRAN.PROP}.

\begin{proposition}[Key formula for $\U$-valued branching]\label{l.tr5}
\mbox{}\\
 For $f = \psi e^{-\Phi} \in \tilde{D}$ of Lemma~\ref{l.D_lin:tildeD}  with $\varrho$ satisfying Assumptions~\ref{a.1} and~\ref{a.2}: 
 \begin{equation}\label{e1317}
  \tilde{\Omega} f(t,\mfu) = \psi'(t) e^{-\Phi(\mfu)} + \psi(t) g_\phi(t,\mfu) e^{-\Phi(\mfu)} ,
 \end{equation}
 where 
\begin{equation}\label{e1032}
g_\phi(t,\mfu)= \Omega^{\uparrow, \rm grow}   \Phi^{n,\phi \varrho_t} (\mfu) + \frac{bn}{2\bar{\mfu}} \Phi^{2n, (\phi\varrho_t) \times (\phi \varrho_t) \circ \theta_{1,n+1}} (\mfu)
\end{equation} 
is $\sqcup^t$-additive.
 Moreover, under Assumption~\ref{a.1} the function $\U(t)^\sqcup \mapsto \R,\, \mfu \mapsto \Phi^{n,\phi \varrho_t} (\mfu)$ is additive for any $\phi \in C(\R^{n\choose 2})$, $n \in \N$. \qed
\end{proposition}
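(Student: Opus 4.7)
The plan is to verify the generator identity by a term-by-term computation of $\tilde\Omega f$ for $f(t,\mfu)=\psi(t)\exp(-\Phi^{n,\phi\varrho_t}(\mfu))$, and then to read off $\sqcup^t$-additivity of $g_\phi$ from a clean additivity statement for the smoothly truncated polynomials themselves.

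First I would establish the last sentence of the proposition, the additivity of $\mfu\mapsto\Phi^{n,\phi\varrho_t}(\mfu)$ on $\U(t)^\sqcup$, as this is the technical engine of everything that follows. The argument is a routine adaptation of Theorem~\ref{Inf-p.trunc.poly} of~\cite{infdiv} replacing the indicator truncation $c_t$ by $\varrho_t$: writing
\begin{equation*}
 \Phi^{n,\phi\varrho_t}(\mfu_1\sqcup^t\mfu_2)=\int_{(U_1\uplus U_2)^n}\phi(\dr(\ux))\varrho_t(\dr(\ux))(\wt\mu_1+\wt\mu_2)^{\otimes n}(d\ux)
\end{equation*}
and expanding the tensor product into $2^n$ pure and mixed contributions, every mixed sample contains a pair of coordinates lying in different components of $\mfu_1\sqcup^t\mfu_2$, hence at genealogical distance exactly $2t$, where $\varrho_t$ vanishes by Assumption~\ref{a.1}. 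Only the two pure tensors survive, giving the additive decomposition.

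For the generator identity, write $\Phi_t:=\Phi^{n,\phi\varrho_t}$. The explicit time derivative of $f$ contributes $\psi'(t)e^{-\Phi_t}-\psi(t)e^{-\Phi_t}\Phi^{n,\phi\partial_t\varrho_t}$. The growth operator is a first-order differential operator in the distance coordinates, so a chain rule on polynomials yields $\Omega^{\uparrow,\rm grow}e^{-\Phi_t}=-e^{-\Phi_t}\Phi^{n,2\overline{\nabla}(\phi\varrho_t)}$. Assumption~\ref{a.2}, equivalently $\partial_t\varrho_t=-2\overline{\nabla}\varrho_t$, makes the two $\varrho$-derivative contributions recombine exactly so that their sum equals $-e^{-\Phi_t}\Omega^{\uparrow,\rm grow}\Phi_t$, producing the first summand of $g_\phi$. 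The branching operator is not a derivation; expanding $e^{-\Phi_t}$ as a power series in $\Phi_t$, applying $\Omega^{\uparrow,\rm bran}$ termwise, and using the multiplication rule $\Phi^{n,\phi_1}\cdot\Phi^{n,\phi_2}=\Phi^{2n,\phi_1\times\phi_2}$ together with the symmetry in the $n$ sample labels, the linear $an$-piece and the resampling sum $\tfrac{b}{\bar\mfu}\sum_{k<l}\Phi^{n,\cdot\circ\theta_{k,l}}$ collapse into the compact form $-e^{-\Phi_t}\tfrac{bn}{2\bar\mfu}\Phi^{2n,(\phi\varrho_t)\times(\phi\varrho_t)\circ\theta_{1,n+1}}$. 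Collecting terms yields the claimed identity.

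Finally the $\sqcup^t$-additivity of $g_\phi$ reduces to the additivity of its two summands. The first summand $\Omega^{\uparrow,\rm grow}\Phi^{n,\phi\varrho_t}=\Phi^{n,2\overline{\nabla}(\phi\varrho_t)}$ is a polynomial whose test function still vanishes whenever some $r_{ij}$ reaches $2t$, by the monotonicity half of Assumption~\ref{a.1}, and the pure-sample argument above applies directly. For the second summand, the key observation is that after the identification $x_{n+1}\leftrightarrow x_1$ coming from $\theta_{1,n+1}$, the integrand does not depend on $x_{n+1}$, so integration over that variable contributes an explicit factor $\bar\mfu$ that cancels the $1/\bar\mfu$ prefactor, leaving a $(2n-1)$-point polynomial. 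Restricted to $\mfu_1\sqcup^t\mfu_2$, the integrand vanishes unless all remaining sampled points (in both blocks, forced to be in the same component through the shared coordinate $x_1$) lie in a single component, and summing the two one-component contributions produces $g_\phi(t,\mfu_1)+g_\phi(t,\mfu_2)$. The delicate technical point, which I expect to be the main obstacle, is precisely this cancellation of the $1/\bar\mfu$ normalization against the trivial $x_{n+1}$ integration: it is exactly what turns an a priori affine-in-$\bar\mfu$ expression into a genuinely $\sqcup^t$-additive one.
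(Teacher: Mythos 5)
Your proposal is correct and follows essentially the same route as the paper: the generator identity is obtained by expanding the exponential and using Assumption~\ref{a.2} to cancel the $\partial_t\varrho_t$ contribution against the $2\overline{\nabla}\varrho_t$ one, and $\sqcup^t$-additivity is read off from the fact that all resulting expressions are (smoothly) truncated polynomials whose mixed-sample contributions vanish because concatenated points sit at distance $2t$, where $\varrho_t=0$ by Assumption~\ref{a.1}. The one place where you supply more than the paper is the additivity of the second summand of $g_\phi$, which the paper dismisses with ``similar reasoning applies for $g_2$'' and a reference to \cite{infdiv}: you correctly isolate the only non-obvious point there, namely that the prefactor $1/\bar{\mfu}$ is not additive and must be cancelled by the trivial integration over the coordinate $x_{n+1}$ that $\theta_{1,n+1}$ identifies with $x_1$, leaving a genuine $(2n-1)$-point truncated polynomial.
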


The proofs of all the previous results and in particular the {\em calculation of $ g_\phi $} are contained in Subsection~\ref{ss:pr:dual} and~\ref{ss.pop}.

\subsection{Proof of tools: Lemma~\ref{L.EX},~\ref{l.D_lin:tildeD} and Proposition~\ref{l.tr5}}\label{ss:pr:dual}

\begin{proof}[Proof of Lemma~\ref{L.EX}]\label{p.L.EX}
 The proof checks the conditions of Lemma 4.3.4 in \cite{EK86} giving the claim. Define the following functions in our case not depending on $\omega$, but we use the notation of the reference).
 \begin{align}\label{e1342}
  u:& \begin{cases}
      [0,\infty) \times \U  \times \Omega & \to \R \\
      (t,\mfu,\omega) & \mapsto \psi(t) \Phi^{n,\phi \varrho_t} (\mfu) \, ,
     \end{cases}\\
  v:& \begin{cases}
      [0,\infty) \times \U  \times \Omega & \to \R \\
      (t,\mfu,\omega) & \mapsto \psi'(t) \Phi^{n,\phi \varrho_t} (\mfu) + \psi(t) \Phi^{n,\phi 
\partial_t \varrho_t} (\mfu) \, ,
     \end{cases} \label{e1342b}\\
  w:& \begin{cases}
     [0,\infty) \times [0,\infty) \times \U  \times \Omega & \to \R \\
      (t,s,\mfu,\omega) & \mapsto \psi(t) \left( \Phi^{n,2 \overline{\nabla} (\phi \varrho_t)} (\mfu) +
      an \Phi^{n,\phi \varrho_t}(\mfu) +
\frac{b}{\bar{\mfu}} \sum_{1\leq k<l\leq n}\Phi^{n,\phi \varrho_t \circ \theta_{k,l}} (\mfu) \right) 
\,.\label{e1342c}
     \end{cases}
 \end{align}
The following {Assumptions (4.3.10) and (4.3.11)} of the reference are by the construction of the process (note we vary here either only in the explicit time coordinate or in the state of the genealogy)
satisfied for $s\leq t$:
\begin{align}\label{e1343}
 \E \left[ u(t,\mfU_t) - u(s,\mfU_t) - \int_s^t v(r,\mfU_t) \, \dx r | \mcF_s \right] = 0, \\
 \E \left[ u(s,\mfU_t) - u(s,\mfU_s) - \int_s^t w(s,r,\mfU_r)\, \dx r | \mcF_s \right] = 0. \label{e1343b}
\end{align}
Additionally, $\mfU$ is right-continuous by construction and $\mfu \mapsto v(t,\mfu,\omega)$ is 
continuous for fixed $t, \omega$, since $\phi \varrho_t$ and $\phi \partial_t \varrho_t$ are continuous 
and so the terms involved in $v$ are classical polynomials. 
The left-continuity of $t \mapsto 
w(t,s,\mfu,\omega)$ is clear by continuity of $t \mapsto \overline{\nabla}\varrho_t$ and $t\mapsto 
\varrho_t$. 

Note that our functions are {\em not bounded} as required in \cite{EK86}. However, to obtain 
convergence in their equation (4.3.17), the necessary {\em integrability criteria} follow by a dominated convergence argument with 
moment conditions on $\sup_{s\leq t} \bar{\mfU}_s$ and the bounds on $\|\psi\|,\|\psi'\|,\|\phi\|, 
\|\partial_t \varrho_t\|,\|\overline{\nabla}\varrho \|,$ etc. More concrete: boundedness of $v$ can be 
replaced by $\E[\sup_{s\in[t_1,t_2]} |v(s,\mfU_s)| ] < \infty$ and similarly for $w$.
This is a consequence of properties of the total mass process, the Feller diffusion, where all moments exist for all $ t $  starting in a fixed point  and the total mass process is a semi-martingale.

We now calculate the generator action of the Markov process which was introduced in Lemma~\ref{L.EX}, namely for $f = \psi\Phi \in D_{\textup{lin}}$:
\begin{align}\label{e1344}
 \Omega^{\uparrow}  \psi(t) \Phi^{n,\phi \varrho_t} (\mfu) &= \psi(t) \Big( an \Phi^{n,\phi \varrho_t}(\mfu) + \frac{b}{\bar{\mfu}} 
\sum_{1\leq k<l\leq n}\Phi^{n,\phi \varrho_t \circ \theta_{k,l}} (\mfu) \\
& \phantom{xxxxxxxxxxxxxxxxxxxxx} + \Phi^{n,2(\overline{\nabla} \phi) \varrho_t} (\mfu) +  \Phi^{n,2\phi (\overline{\nabla} \varrho_t)} (\mfu) \Big) \, , \nonumber \\
\label{e1344b} \partial_t \psi(t) \Phi^{n,\phi \varrho_t} (\mfu) &= \psi'(t) \Phi^{n,\phi \varrho_t} (\mfu) + \psi(t) 
\Phi^{n,\phi \partial_t \varrho_t} (\mfu) \, . 
\end{align}

combining \eqref{e1342}-\eqref{e1344b} gives the claim of the lemma.
\end{proof}

\begin{proof}[Proof of Lemma~\ref{l.D_lin:tildeD}]\label{pr.l.D_lin}
We start with showing \eqref{i.tr312}.
 Before we start we have to use the definition of $\Omega$ on $D$ to obtain now the action of the operator which is induced on other functions, namely $\tilde{\Omega}$ acting on $\tilde{D}$.
 This will give us then the compensator for the process $exp(-\Phi^{n,\phi}(\mfU_t))$, allowing us to show that the following process $(M^{n, \phi}_t)_{t \geq 0}$ is a martingale (here $ \phi \times \phi $ refers to the function $ \phi $ evaluating two independent samples):
\begin{align}\label{e1345}
  M^{n,\phi}_t := & \exp(-\Phi^{n,\phi}(\mfU_t)) - \exp(-\Phi^{n,\phi}(\mfU_0) \\
  & - \int_0^t \exp(-\Phi^{n,\phi}(\mfU_t)) \left[ \Omega^{\uparrow, \rm grow} \Phi^{n,\phi}(\mfU_s) + \frac{n b}{2 \bar{\mfU}_s} \Phi^{2n, \phi \times \phi \circ \theta_{1,n+1}}(\mfU_s) \right] \, \dx s , \, t\geq0. \label{e1345b}
\end{align}

 Therefore, we obtain the operator $\wt \Omega=\Omega^\uparrow + \frac{\partial}{\partial t}$ on $\wt D$ by calculating:
\begin{align}
  \label{e.tr8}\Omega^{\uparrow}  \psi(t) \exp(-\Phi^{n,\phi \varrho_t}) (\mfu) &= \psi(t) \exp(-\Phi^{n,\phi \varrho_t} (\mfu)) \Big( \Omega^{\uparrow, \rm grow}   \Phi^{n,\phi \varrho_t} (\mfu) + \frac{bn}{2\bar{\mfu}} \Phi^{2n, \phi\times \phi \circ \theta_{1,n+1}} (\mfu) \Big) \, ,\\
  \label{e.tr9} \partial_t \psi(t) \exp(-\Phi^{n,\phi \varrho_t}) (\mfu) &= \psi'(t) \exp(-\Phi^{n,\phi \varrho_t}) (\mfu) + \psi(t) 
  \exp(-\Phi^{n,\phi \varrho_t}(\mfu))  \cdot \Phi^{n,\phi \partial_t \varrho_t} (\mfu) \, .
\end{align}

 Arguing as in the proof of Lemma~\ref{L.EX} one gets then indeed,
\begin{equation}\label{e.tr10}\begin{split}
 N_t^{\psi,\phi} :=  \psi(t) & \exp(- \Phi^{n,\phi \varrho_t}(\mfU_t)) - \psi(0) \exp(-\Phi^{n,\phi \varrho_t}(\mfU_s)) \\
  & - \int_0^t \Omega^{\uparrow}  \psi(t) \exp(-\Phi^{n,\phi \varrho_t}) (\mfU_s) + \partial_t \psi(t) \exp(-\Phi^{n,\phi \varrho_t}) (\mfU_s) \, \dx s, \, t \geq0
  \end{split}
\end{equation}
defines a martingale.
This is \eqref{i.tr312}.

We therefore have to verify \eqref{e.tr8} and \eqref{e.tr9}.
The second is simply calculus. Next to the first.
A standard calculation as in Corollary 2.13 of \cite{GPWmp13} shows this via the \Ito-formula.
Namely we expand the exponential and calculate the compensator of $\Phi(\mfU_t)$ and the quadratic variation of $(\Phi(\mfU_t))_{t \geq 0}$, where the latter requires to calculate the compensator of $(\Phi^2(\mfU_t))_{t \geq 0}$.
Follow here \cite{GPWmp13} equation (8.1) and (8.3) to get this.

Now \eqref{i.tr311} follows by differentiation of $N_t^{\psi,\lambda \phi}$ w.r.t.~$\lambda$ at $\lambda = 0$.
\end{proof}

\begin{proof}[Proof of Proposition~\ref{l.tr5}]\label{pr.l.tr5}
 To get the action of $\tilde{\Omega}$ on $\tilde{D}$ we need to add up terms in \eqref{e.tr8} and \eqref{e.tr9}.
 Using Assumption~\ref{a.2} as before this allows to eliminate the terms with $\nabla \varrho_t$ and $\partial_t \varrho_t$ to obtain:
\begin{align}
\label{e.tr7} \tilde{\Omega}  \psi(t) \exp(-\Phi^{n,\phi \varrho_t}) (\mfu) & = \psi(t) \exp(-\Phi^{n,\phi \varrho_t}) (\mfu) \Big(  \Omega^{\uparrow, \rm grow}   \Phi^{n,\phi \varrho_t} + \frac{bn}{2\bar{\mfu}} \Phi^{2n, \phi\varrho_t \times \phi \varrho_t \circ \theta_{1,n+1}} (\mfu) \Big) \\
& \quad + \psi'(t) \exp(-\Phi^{n,\phi \varrho_t}) (\mfu) \, .\label{e.tr7b}
\end{align}
Using the notation of the lemma we need to show that the expression below is $\sqcup^t$-additive:
\begin{equation}\label{e1347}
 g(t,\mfu) = \Omega^{\uparrow, \rm grow}   \Phi^{n,\phi \varrho_t} (\mfu) + \frac{bn}{2\bar{\mfu}} \Phi^{2n, (\phi\varrho_t) \times (\phi \varrho_t) \circ \theta_{1,n+1}} (\mfu) =: g_1(t,\mfu) + g_2(t,\mfu).
\end{equation}

We see that the expressions are truncated polynomials.
It is elementary using Assumption~\ref{a.1} and Proposition~\ref{Inf-p2907131206} in \cite{infdiv} to establish that $g_1(t,\cdot)$ is $\sqcup^t$-additive. Similar reasoning applies for $g_2$. 
Again by Proposition~\ref{Inf-p2907131206} in the same reference we see with Assumption~\ref{a.1} that $\mfu \mapsto \exp(-\Phi^{n,\phi \varrho_t}(\mfu))$ is multiplicative on $\U(t)^\sqcup$ and note here that the operator of our process relates via \eqref{e1307} to the operator of Fleming-Viot in the reference via $\bar \mfu \Omega^\uparrow-Id$.
\end{proof}

\subsection{Proof of duality: Proposition \ref{p.dual.ext}}\label{ss.pop}
Here we will derive a duality related for the operator 
$(\tilde{\Omega}, D_{\textup{lin}})$.
This allows to deduce {\em uniqueness} for the $(\tilde{\Omega},\tilde{D})$ 
martingale problem.

It is easy to verify the following preparatory lemma.
\begin{lemma}\label{l.tr3}
 Under Assumption~\ref{a.1} and~\ref{a.2} the function $\varrho^{(n)} \in C^1(\R_+ \times \bbD_n, [0,1])$ is of the form
 \begin{equation}\label{e1346}
  \varrho^{(n)} (t, \dr) = \hat{\varrho}^{(n)}(\dr -2 \underline{\underline{t}}) , \ t \geq0, \dr \in \bbD_n, n \in \N \,,
 \end{equation}
 for a function $\hat{\varrho}^{(n)} \in C^1(\bbD_n,[0,1])$ with $\hat{\varrho}^{(n)} |_{\bbD_n \setminus (0,\infty)^{n \choose 2}} \equiv 0$ and positive on $(0,\infty)^{n \choose 2}$. Moreover $(\partial_t + 2 \bar{\nabla} ) \varrho^{(n)} (t,\dr) = 0$. \qed
\end{lemma}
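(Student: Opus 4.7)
The plan is to recognise that Assumption~\ref{a.2} is exactly the invariance of $\varrho^{(n)}$ under the one-parameter family of shifts $\Phi_c : (t,\dr) \mapsto (t+c,\,\dr+2\underline{\underline{c}})$, $c \geq 0$, on $\R \times \R^{n\choose 2}$. First I would differentiate the identity in Assumption~\ref{a.2} with respect to $c$ at $c = 0$; since $\varrho^{(n)} \in C^1$ this yields $\partial_t \varrho^{(n)} + 2 \sum_{i<j} \partial_{r_{ij}} \varrho^{(n)} = 0$, i.e.~$(\partial_t + 2\bar{\nabla})\varrho^{(n)} = 0$, which is the transport identity stated at the end of the lemma.

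Next I would use this flow-invariance to extract the factored representation. Each orbit of $\Phi$ meets the reference slice $\{t = 0\}$ in a single point, reached from $(t,\dr)$ via the retraction $(t,\dr)\mapsto (0,\dr - 2\underline{\underline{t}})$. Setting
\[
  \hat{\varrho}^{(n)}(\underline{\underline{s}}) := \varrho^{(n)}(0,\underline{\underline{s}})
\]
(and extending by $0$ outside the natural domain where needed) and applying Assumption~\ref{a.2} with $c = t$ read along the orbit in reverse gives $\varrho^{(n)}(t,\dr) = \hat{\varrho}^{(n)}(\dr - 2\underline{\underline{t}})$ for all admissible $(t,\dr)$. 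The $C^1$-regularity of $\hat{\varrho}^{(n)}$ is inherited from that of $\varrho^{(n)}$ because the retraction to the slice is a linear (hence smooth) isomorphism onto its image, and the zero-set/strict positivity of $\hat{\varrho}^{(n)}$ can then be read off directly from Assumption~\ref{a.1} under the change of variables $\underline{\underline{s}} = \dr - 2\underline{\underline{t}}$.

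The lemma is really a bookkeeping statement rather than a deep result, so there is no substantive obstacle; the only point that needs a little care is that $\dr - 2\underline{\underline{t}}$ may leave $\bbD_n$ (in particular acquire negative entries), while $\hat{\varrho}^{(n)}$ is declared on $\bbD_n$. This is reconciled by extending $\hat{\varrho}^{(n)}$ by zero outside $\bbD_n$ in a way consistent with the vanishing of $\varrho^{(n)}$ on $\{(t,\dr): \exists\, i<j,\ r_{ij} \geq 2t\}$ provided by Assumption~\ref{a.1}; this makes the factored representation well-defined pointwise and the three claims of the lemma follow simultaneously.
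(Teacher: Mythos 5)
Your argument is correct and is precisely the verification the paper omits (the lemma is introduced only with ``it is easy to verify''): differentiating the shift--invariance of Assumption~\ref{a.2} in $c$ at $c=0$ gives the transport identity $(\partial_t+2\bar{\nabla})\varrho^{(n)}=0$, and applying the invariance with $c=t$ to the slice $t=0$ gives the factorization $\varrho^{(n)}(t,\dr)=\varrho^{(n)}(0,\dr-2\underline{\underline{t}})$, with the only delicate point --- that $\dr-2\underline{\underline{t}}$ may leave $\bbD_n$ --- handled exactly as you describe by a zero extension consistent with Assumption~\ref{a.1}. One remark: reading the zero set off Assumption~\ref{a.1} under the substitution $\underline{\underline{s}}=\dr-2\underline{\underline{t}}$ actually yields that $\hat{\varrho}^{(n)}$ vanishes as soon as some coordinate of $\underline{\underline{s}}$ is $\geq 0$ and is positive on $(-\infty,0)^{n \choose 2}$, so the support statement in the lemma as printed (positivity on $(0,\infty)^{n\choose 2}$) has the sign reversed relative to \eqref{e1346} --- this is a defect of the statement, not of your proof, and is consistent with the later lemma in Section~5.2 which requires $\hat{\varrho}|_{[0,\infty)}\equiv 0$.
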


Using the two assumptions we can thus derive simpler expression statement for the action of $\tilde{\Omega}$:
\begin{align}
\label{e.tr4} \tilde{\Omega}  \psi(t) \Phi^{n,\phi \varrho_t} (\mfu) &= \psi(t) \Big( bn \Phi^{n,\phi \varrho_t}(\mfu) + \frac{b}{\bar{\mfu}} 
\sum_{1\leq k<l\leq n}\Phi^{n,(\phi \varrho_t) \circ \theta_{k,l}} (\mfu) \\
& \phantom{xxxxxxxxxxxxxxxxxxxxx} + 
\Phi^{n,2(\overline{\nabla} \phi) \varrho_t} (\mfu) \Big) + \psi'(t) \Phi^{n,\phi \varrho_t} (\mfu) \, .\label{e.tr4b}
\end{align}
This makes the function of the two assumptions clear .

We restrict ourselves to the case that $a=0$, i.e.~there is no drift; all calculations can be done without that restriction, see Section~\ref{ss.2104}, Step 2 in \cite{ggr_tvF14} for details.
All this is done for a fixed function $\hat{\varrho}^{(n)}$ as in Lemma~\ref{l.tr3}, i.e.~Assumptions~\ref{a.1} and~\ref{a.2} are fulfilled and \eqref{e.tr4} can be used.

Define the following set of functions:
\begin{equation}\label{e1348}
 H^{\phi,\psi} : \begin{cases} 
                  (\bbU\times \R_{\geq0}) \times (\bbS \times \bbR^{\bbN \choose 2}\times 
\R_{\geq0})  \to \R \\
                  ((\mfu,t), (p,\dr',s))  \mapsto  \psi(t+s) \int \mu^{\otimes \# p}(\dx 
\underline{x}_p)\, \phi \left( \dr^p(\underline{x}_p) +  \dr' \right) 
\varrho(2(\underline{\underline{t}} + \underline{\underline{s}}) - \dr^p(\underline{x}_p) -  \dr').
                 \end{cases}
\end{equation}
Define for $p(n) = \{\{1\},\dotsc, \{n-1\},\{n,n+1,\dotsc \} \}$ the following sets of functions:
\begin{equation}\label{e1349}
 \mcH = \{ H^{ \phi,\psi}(\cdot, \cdot):\ \phi \in C_b^1(\bbR^{\N \choose 2}) \text{ with finite 
support},\, \psi \in C_b^1(\R_{\geq0}) \},
\end{equation}
\begin{align}\label{e1350}
 \mcG^{\uparrow} &= \{ H^{ \phi,\psi}(0, (p(n),\underline{\underline{0}})):\  n \in \N, \psi 
\in C_b^1(\R_{\geq0}), \phi \in C_b^1(\bbR^{\N \choose 2}) \text{ with finite support} \} \qquad 
\text{and}\\
 \mcG^{\downarrow} & = \{ H^{ \phi, \psi}((t,\mfu), \cdot):\ \mfu \in \bbU,\, t\geq0, \phi \in 
C_b^1(\bbR^{\N \choose 2}) \text{ with finite support}, \psi \in C_b^1(\R_{\geq0}) \} \, .\label{e1350b}
\end{align}
Then $D_{\textup{lin}} = \mcG^\uparrow.$ 

Next turn to the dual process. For a function $G: \bbK \times \R_{\geq0} \to \R$ depending on only
finitely many coordinates define
\begin{align}\label{e1351}
 L^{\downarrow,\textrm{grow}} G(s, p,\dr') &= \partial_s G(s,p,\dr') + \sum_{i \nsim_p j} 
\frac{\partial}{\partial r_{ij}'} G(s,p,\dr') \qquad \text{and} \\
 L^{\downarrow,\textrm{coal}} G(s,p,\dr') &= b \sum_{\pi, \pi' \in p} 
\left(G(s,\kappa_p(\pi,\pi'),\dr') - G(s,p,\dr') \right),\label{e1351b}
\end{align}
for $p\in \bbS$, $\dr' \in \bbR^{N \choose 2}$ and $s\geq0$. 
Here $ \chi_p(\pi,\pi^\prime) $ is the partition where $ \pi $ and $ \pi^\prime $ are replaced by their union.

The coalescent operator is now:
\begin{equation}\label{e1352}
 L^{\downarrow,\textrm{K}} = L^{\downarrow,\textrm{grow}} + L^{\downarrow,\textrm{coal}}.
\end{equation}
One obtains readily for the Kingman coalescent a marked ultrametric measure space which gives a solution to a martingale problem related to that operator.
\begin{lemma}\label{lem:K+time:ex}
Let $n\in \N$ and let $(s,k_s)_{s\geq0}$ be the time-space genealogy-valued Kingman coalescent started in 
$(0,p(n),\underline{\underline{0}})$ defined on page 809 of \cite{GPWmp13}. Then the process 
$(s,k_s)_{s\geq0}$ is a solution of the martingale 
problem for $(\delta_{(0,p(n),\underline{\underline{0}})}, 
L^{\downarrow,\textrm{K}}, \cdot, \mcG^{\downarrow})$. \qed
\end{lemma}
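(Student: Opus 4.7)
The plan is to recognise $(s,k_s)_{s\geq 0}$, starting from $(0,p(n),\underline{\underline{0}})$, as a piecewise-deterministic pure-jump Markov process which is essentially finite-dimensional for any fixed test function in $\mathcal{G}^{\downarrow}$, and then to read off its infinitesimal generator by a direct Dynkin-type computation. Concretely, I would first recall from the construction in \cite{GPWmp13} that the time-space genealogy-valued Kingman coalescent evolves as follows: between successive coalescence events each inter-block distance $r'_{ij}$ (with $i\nsim_{p_s} j$) grows deterministically at unit rate and the explicit time coordinate $s$ increases at unit rate, while each unordered pair of blocks $\{\pi,\pi'\}\subset p_s$ merges at exponential rate $b$, at which instant $p_s$ is replaced by $\kappa_{p_s}(\pi,\pi')$ and the new inter-block coordinates inherit the value of the relevant old ones. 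The initial condition $k_0=(p(n),\underline{\underline{0}})$ is matched by construction.

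Next, for a test function $G=H^{\phi,\psi}((t,\mfu),\cdot)\in\mathcal{G}^{\downarrow}$ I would use that $\phi\in C^1_b(\bbR^{\bbN\choose 2})$ has finite support, so $G$ depends on only finitely many coordinates of $\dr'$ and on the partition only through its restriction to a finite index set. Since the number of blocks of $p_s$ starts at $n$ and can only decrease, the trajectory $(s,k_s)$, restricted to the coordinates that matter for $G$, lives in a bounded-dimensional state space; all quantities appearing below are therefore bounded on each finite time interval.

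The core step is to identify the generator on $G$. By decomposing on the first jump time (alternatively by applying the standard Dynkin formula for piecewise-deterministic Markov processes), the deterministic drift between jumps contributes $\partial_s G(s,p,\dr')+\sum_{i\nsim_p j}\partial_{r'_{ij}}G(s,p,\dr')$, which is exactly $L^{\downarrow,\mathrm{grow}}G$; each coalescence event $\{\pi,\pi'\}\in p$ happens at rate $b$ and yields an additive contribution $b\bigl(G(s,\kappa_p(\pi,\pi'),\dr')-G(s,p,\dr')\bigr)$, whose sum over pairs is $L^{\downarrow,\mathrm{coal}}G$. Hence $L^{\downarrow,\mathrm{K}}G=L^{\downarrow,\mathrm{grow}}G+L^{\downarrow,\mathrm{coal}}G$ is the infinitesimal generator of $(s,k_s)$ on $\mathcal{G}^{\downarrow}$.

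Finally, the process
\[
G(s,k_s)-G(0,k_0)-\int_0^s L^{\downarrow,\mathrm{K}}G(u,k_u)\,\dx u,\qquad s\geq 0,
\]
is a local martingale by Dynkin's formula; the uniform bound on $\phi,\psi,\psi'$ together with the bound on the number of blocks promotes it to a true martingale, giving the claim. I expect the only genuine obstacle to be bookkeeping: one has to check carefully that the unit-rate growth convention for $r'_{ij}$ encoded in $L^{\downarrow,\mathrm{grow}}$ agrees with the construction in \cite{GPWmp13} (rather than the rate~$2$ one might expect from the $\U$-valued Feller growth operator $\Omega^{\uparrow,\mathrm{grow}}$ in \eqref{e1307}), and that the combinatorial sum over $\{\pi,\pi'\}\in p$ in $L^{\downarrow,\mathrm{coal}}$ matches the unordered pair rate, so that no stray factor of $\tfrac{1}{2}$ appears in the duality to be used later.
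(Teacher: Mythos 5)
Your argument is correct, but it follows a different technical route from the paper. You treat $(s,k_s)_{s\ge0}$ directly as a piecewise-deterministic pure-jump Markov process and read off the generator from the first-jump decomposition, obtaining $L^{\downarrow,\mathrm{grow}}$ from the deterministic drift and $L^{\downarrow,\mathrm{coal}}$ from the rate-$b$ pair mergers, then upgrade the local martingale to a true one using $\#p_s\le n$. The paper instead invokes Lemma~4.3.4 of \cite{EK86}: it introduces the triple of functions $u,v,w$ (the test function, its time-compensator, and its space-compensator), refers to Section~4 of \cite{GPWmp13} for the verification of conditions (4.3.10)--(4.3.11), and then checks the regularity hypotheses of that lemma --- right-continuity of $s\mapsto k_s$, continuity of $(p,\dr')\mapsto v$ (trivial in $p$ since $\bbS$ is discrete, and by dominated convergence in $\dr'$), left-continuity of $s\mapsto w$, and the integrability needed in (4.3.17), the last again via the monotone decrease of $\#p_s$. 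What the paper's route buys is the reuse of the already-established generator computations for the (non-time-space) coalescent from \cite{GPWmp13}, at the cost of verifying the list of continuity conditions; your route is more self-contained and delivers the true-martingale property directly from the boundedness of $H^{\phi,\psi}((t,\mfu),\cdot)$ along trajectories (since $|G|\le\|\psi\|_\infty\|\phi\|_\infty\max(1,\bar{\mfu}^{\,n})$ for fixed $(t,\mfu)$) and the bounded total jump rate $b\binom{\#p_s}{2}\le b\binom{n}{2}$. Your closing caveat about matching the unit-rate convention in $L^{\downarrow,\mathrm{grow}}$ against the rate-$2$ growth in $\Omega^{\uparrow,\mathrm{grow}}$ is a legitimate bookkeeping point --- it is resolved in the duality by the fact that $\varrho$ is evaluated at $2(\underline{\underline{t}}+\underline{\underline{s}})-\dr^p-\dr'$, so the explicit $\partial_s$ term carries the missing factor --- but it does not affect the validity of this lemma.
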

\begin{proof}
Follow Lemma 4.3.4 in \cite{EK86}.
Let for any fixed $(t,\mfu) \in \R_{\geq0} \times \U$:
 \begin{align}\label{e1353}
  u:& \begin{cases}
      [0,\infty) \times \bbS \times \R^{\N \choose 2}  \times \Omega  \to \R \\
      (s,(p,\dr'),\omega)  \mapsto  \psi(t+s) \int \mu^{\otimes \# p}(\dx \underline{x}_p)\, \phi 
\left( \dr^p(\underline{x}_p) +  \dr' \right) \varrho(2(\underline{\underline{t}} + 
\underline{\underline{s}}) - \dr^p(\underline{x}_p) -  \dr') ,
     \end{cases}\\
  v:& \begin{cases}
      [0,\infty) \times \bbS& \times \R^{\N \choose 2}  \times \Omega  \to \R \\
      (s,(p,\dr'),\omega) &\mapsto   \psi'(t+s) \int \mu^{\otimes \# p}(\dx \underline{x}_p)\, \phi 
\left( \dr^p(\underline{x}_p) +  \dr' \right) \varrho(2(\underline{\underline{t}} + 
\underline{\underline{s}}) - \dr^p(\underline{x}_p) -  \dr'), \\
                         & \qquad + \psi(t+s) \int \mu^{\otimes \# p}(\dx \underline{x}_p)\, \phi 
\left( \dr^p(\underline{x}_p) +  \dr' \right) \partial_s \varrho(2(\underline{\underline{t}} + 
\underline{\underline{s}}) - \dr^p(\underline{x}_p) -  \dr')
     \end{cases} \label{e1353b}\\
  w:& \begin{cases}
     [0,\infty) \times [0,\infty) \times \bbS \times \R^{\N \choose 2}  \times \Omega  \to \R \\
      (s,\vartheta, (p,\dr'),\omega)  \mapsto (L^{\downarrow,\text{coal}} + \sum_{i\nsim_p j} 
\frac{\partial}{\partial r_{ij}'} ) H((t,\mfu),(s,p,\dr')).
     \end{cases}\label{e1353c}
 \end{align}
To check (4.3.10) and (4.3.11) in \cite{EK86} use calculations as in
Section 4 in \cite{GPWmp13}.

Additionally, $(k_s)_{s\geq0} = (p_s,\dr_s')_{s\geq0}$ is right-continuous by construction and 
$(p,\dr') \mapsto v(s,p,\dr',\omega)$ is continuous for fixed $s, \omega$: Continuity in $p$ is 
obvious, since $\bbS$ is a discrete space. Continuity in $\dr'$ is true,  $(\dr',\dr) \mapsto 
\phi(\dr + \dr') \varrho(2(\underline{\underline{t}}-\underline{\underline{s}})-\dr - \dr')$ is 
continuous and bounded. This allows to apply dominated convergence to get the continuity in $\dr'$.

The left-continuity of $s \mapsto w(s,\vartheta,p,\dr',\omega)$ is clear by continuity of $s \mapsto 
\overline{\nabla}\varrho_s$ and $s\mapsto \varrho_s$.
Note that now our functions involved are {\em not bounded} as required in \cite{EK86}. However, $(\# 
p_s)_{s\geq0}$ is decreasing and so for any initial state the convergence in (4.3.17) can be shown.
\end{proof}

\begin{lemma}[Feynman-Kac Duality]\label{l.tr1}
 Let $k=(s,k_s)_{s\geq0}$ be a solution of the $(\delta_{(0,p(n),\underline{\underline{0}})}, 
L^{\downarrow,\textrm{K}}, \cdot, \mcG^{\downarrow})$ martingale problem. Let $(t,\mfU_t)_{t\geq0}$ 
be a solution of the $(\tilde{A},D_{\textup{lin}})$ martingale problem started in $\mbfP_0 \otimes 
\delta_0 \in \mcM_1(\U \times \R)$. Then for all $H^{\phi,\psi} \in \mcH$ the following Feynman-Kac duality holds:
 \begin{equation}\label{e1354}
  \E_{\mbfP_0} \left[ H^{\phi,1} ((t,\mfU_t),(0,p,\underline{\underline{0}})) \right] = \E_{(0,p,\underline{\underline{0}})} \left[ H^{\phi,1}(\mfu, (t,p_t,\dr_t')) e^{\int_0^t {\# p_s \choose 2} \, ds} \right].
  \end{equation} \qed
\end{lemma}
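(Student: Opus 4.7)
My plan follows the standard generator-duality recipe. Fix $H = H^{\phi,1} \in \mcH$ and, for $s \in [0,t]$, define
\begin{equation*}
F(s) := \E_{\mbfP_0 \otimes \delta_{(p,\underline{\underline{0}})}}\!\left[ H\bigl((t-s, \mfU_{t-s}),\, (s, p_s, \dr_s')\bigr) \exp\Bigl(\int_0^s \tbinom{\# p_u}{2}\, \dx u\Bigr) \right].
\end{equation*}
Then $F(0)$ and $F(t)$ are exactly the two sides of \eqref{e1354}, so it suffices to prove that $F$ is constant. I would do this by combining the time-space martingale problems of Lemma~\ref{L.EX} (for $\mfU$) and Lemma~\ref{lem:K+time:ex} (for $k$) with the absolutely continuous Feynman--Kac factor via an Itô/product-rule argument, in the spirit of Lemma~4.4.11 in \cite{EK86}.

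The crux is the pointwise generator-duality identity
\begin{equation*}
\tilde{\Omega}_1 H\bigl((t,\mfu),(s,p,\dr')\bigr) \;=\; L^{\downarrow,\text{K}}_2 H\bigl((t,\mfu),(s,p,\dr')\bigr) \;+\; \tbinom{\# p}{2}\, H\bigl((t,\mfu),(s,p,\dr')\bigr),
\end{equation*}
where the subscripts indicate on which variable the operator acts. I would verify this term by term using \eqref{e.tr4} and \eqref{e1351}--\eqref{e1352}. First, $H$ depends on $(t,s)$ only through $\underline{\underline{t}}+\underline{\underline{s}}$ in the argument of $\varrho$, so $\partial_t H = \partial_s H$ and the time derivatives on both sides match. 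Second, the growth term $\Phi^{n,2(\overline{\nabla}\phi)\varrho_t}$ on the forward side matches $\sum_{i \nsim_p j} \partial_{r_{ij}'} H$ on the backward side by integration by parts under $\mu^{\otimes \# p}$, once one observes that variables in the same block of $p$ are identified so intra-block derivatives drop out. Third, and most delicately, in the branching term $\tfrac{b}{\bar{\mfu}}\sum_{k<l}\Phi^{n,(\phi\varrho_t)\circ\theta_{k,l}}$ the map $\theta_{k,l}$ identifies the $l$-th variable with the $k$-th, so the $l$-integration contributes a factor $\bar{\mfu}$ which cancels the $1/\bar{\mfu}$; what remains is exactly the ``gain'' part $b\sum_{\pi,\pi'\in p} H(\kappa_p(\pi,\pi'))$ of $L^{\downarrow,\text{coal}}$. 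The missing ``loss'' part $-b\tbinom{\# p}{2} H$ has no forward counterpart and is absorbed by the Feynman--Kac compensator $\tbinom{\# p}{2} H$.

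Given the generator duality, applying Itô's product rule to $H\bigl((t-s,\mfU_{t-s}),(s,p_s,\dr_s')\bigr)\exp(M_s)$, with $M_s = \int_0^s \tbinom{\# p_u}{2}\,\dx u$, produces three contributions to $F'(s)$: a term $-\tilde{\Omega}_1 H$ from running $\mfU_{t-s}$ backward in the parameter $s$, a term $+L^{\downarrow,\text{K}}_2 H$ from running $(p_s,\dr_s')$ forward, and a term $+\tbinom{\# p_s}{2} H$ from the derivative of the Feynman--Kac exponential (the exponential is of finite variation, so no cross-variation terms arise). The three cancel by the identity above, whence $F$ is constant in the sense of the integrated equation and $F(0)=F(t)$.

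The main obstacle is integrability, since $1/\bar{\mfu}$ is singular at extinction and $\bar{\mfU}$ has only polynomial moments. However, the cancellation $\tfrac{1}{\bar{\mfu}}\cdot\bar{\mfu}$ above occurs already inside the integrand $\tilde{\Omega}_1 H$, so the forward generator acts on $H$ with bounds depending only on $\|\phi\|_\infty$, $\|\varrho\|_\infty$, $\|\overline{\nabla}\phi\|_\infty$ and a polynomial in $\bar{\mfu}$; since $\# p_s\leq n$, the Feynman--Kac factor is bounded by $e^{t\binom{n}{2}}$. Standard moment estimates for the total-mass Feller diffusion, as used in the proof of Lemma~\ref{L.EX}, then supply the uniform integrability needed to justify the Fubini, dominated-convergence, and differentiation steps. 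Finally, the uniqueness assertion of Proposition~\ref{p.dual.ext} for the $t$-top follows since \eqref{e1354} expresses $\E_{\mbfP_0}[H^{\phi,1}((t,\mfU_t),(0,p,\underline{\underline{0}}))]$, for all admissible $\phi$ and $p$, purely in terms of $\mbfP_0$ and the dual process; these Laplace-type functionals (obtained from polynomial ones by exponentiation through Lemma~\ref{l.D_lin:tildeD}) form a separating family for the law of $T_t\mfU_t$.
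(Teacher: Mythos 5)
Your proposal is correct and follows essentially the same route as the paper: the paper also proves the lemma via the generator criterion for Feynman--Kac duality from \cite{EK86} (its equation (4.4.41)), verifying exactly your two matching identities --- $(L^{\downarrow,\textrm{grow}}+\partial_s)H=(\Omega^{\uparrow,\textrm{grow}}+\partial_t)H$ and $(L^{\downarrow,\textrm{coal}}+\binom{\#p}{2})H=\Omega^{\uparrow,\textrm{bran}}H$ --- and dismissing integrability by the boundedness of the exponential since $\#p_s\leq n$. Your write-up merely spells out the interpolation function $F(s)$ and the $\bar{\mfu}$-cancellation in the branching/coalescent matching, which the paper leaves implicit by reference to the unmarked case.
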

\begin{proof}\label{pr.1860}
We show the {\em generator criterion for duality} relations namely (4.4.41) in \cite{EK86}:
\begin{align}\label{e1355}
 ( L^{\downarrow,\textrm{grow}} + \partial_s ) H^{\phi,\psi} ((t,\mfu), (s,p,\dr')) =  
(\Omega^{\uparrow,\textrm{grow}} + \partial_t) H^{\phi,\psi} ((t,\mfu), (s,p,\dr')) , \\
 \left(L^{\downarrow,\textrm{bran}} + {\# p \choose 2} \right) H^{\phi,\psi} ((t,\mfu), (s,p,\dr'))  
=  \Omega^{\uparrow,\textrm{bran}}  H^{\phi,\psi} ((t,\mfu), (s,p,\dr')) .\label{e1355b}
\end{align}
The latter line holds as in the case without marks. To prove the first statement is the same as without marks.
Note that the exponential term is bounded and no integrability problems arise.
\end{proof}

%

%
\begin{proof}[Proof of Proposition \ref{p.dual.ext}]\label{pr.p.dual.ext}
Let $(\mfU_t,t)_{t\geq0}$ and $(\mfU_t',t)_{t\geq0}$ be two solutions of the 
$(\tilde{\Omega},\tilde{D})$ martingale problem. By Lemma~\ref{l.D_lin:tildeD} it is then also a 
solution to the $(\tilde{\Omega},D_{\textup{lin}})$ martingale problem. Fix $\phi \in C^1_b(\bbD_n)$. Then, by duality in Lemma~\ref{l.tr1} we have for 
$p=\{\{1\},\dotsc,\{m-1\},\{m,m+1,\dotsc\}\}$ and $\psi \equiv 1$:
\begin{align}\label{e1356}
  \E_{\mbfP_0} \left[ H^{\phi,1} ((t,\mfU_t),(0,p,0)) \right] &= \E_{(p,0,0)} \left[ H^{\phi,1}(\mfu, (t,p_t,\dr_t')) e^{\int_0^t {\# p_t \choose 2} \, ds} \right] \\ 
  &= \E_{\mbfP_0} \left[ H^{\phi,1} ((t,\mfU_t'),(0,p,0)) \right] .\label{e1356b}
 \end{align}
Hence
\begin{equation}\label{e1357}
 \E_{\mbfP_0} \left[ \Phi^{m,\phi \varrho_t}(\mfU_t) \right] = \E_{\mbfP_0} \left[ \Phi^{m,\phi 
\varrho_t}(\mfU_t') \right].
\end{equation}
Above holds for \emph{any} $\phi \in C^1_b(\bbD_n)$. By Lemma~\ref{l.tr2} and monotone convergence this implies
\begin{equation}\label{e1358}
 \E_{\mbfP_0} \left[ \Phi_t^{m,\phi}(\mfU_t) \right] = \E_{\mbfP_0} \left[ \Phi_t^{m,\phi}(\mfU_t') \right].
\end{equation}
But $\Phi^{m,\phi}_t(\mfU_t) \leq \|\phi\|_\infty \bar{\mfU}_t^m$ and the bound $\E[\bar{\mfU}_t^m] \leq c(t) (\mbfP_0[\bar{\mfU}_0^m] + 1)$ holds by the classical estimates for the Feller diffusion for some function $t \mapsto c(t)< \infty$. 
Therefore we continue with the following statement following the standard argument combining Proposition 2.6. in \cite{GPW09}, Proposition 4.4.6. (page 115) in \cite{EK86} and the discussion after equation 4.4.21. therein:
\begin{lemma}\label{lem:Pi:separating}
The algebra generated by $\Pi$ is separating on 
\begin{equation} \label{tv4}
\tilde{\mcM} = \left\{ P \in 
\mcM_1(\bbU): \, \limsup_{K\to \infty} \frac{1}{K} \left( \int \bar{\mfu}^{nK} \, P (\dx \mfu) 
\right)^{1/K}<\infty \;\forall n\in\bbN\right\}. 
\end{equation}
The algebra generated by $\Pi$ is convergence determining, whenever the limit point is in $\tilde{\mcM}$. \qed
\end{lemma}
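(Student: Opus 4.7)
The plan is to follow the route suggested in the statement: reduce to the probability-measure case of \cite{GPW09}[Proposition~2.6] and handle the total-mass dimension via a Carleman-type moment argument, then add a tightness step in the style of \cite{EK86}[Proposition~4.4.6] for the convergence-determining part.

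First I would verify that $\Pi$ is already closed under pointwise multiplication, so that the algebra generated by $\Pi$ coincides with its linear span. Indeed,
\[
 \Phi^{n,\phi}(\mfu)\,\Phi^{m,\psi}(\mfu) \;=\; \Phi^{n+m,\phi\boxtimes\psi}(\mfu),
\]
where $(\phi\boxtimes\psi)(\dr)$ reads off only the two diagonal blocks of $\dr\in\bbD_{n+m}$. Point separation on $\U$ then follows from the Gromov reconstruction theorem: the total mass is recovered from $\Phi^{n,\1}(\mfu)=\bar\mfu^{\,n}$, and after this scaling is fixed the polynomials $\Phi^{n,\phi}/\bar\mfu^{\,n}$ encode the sampled distance matrix distributions, which are known to characterise the equivalence class $[U,r,\mu]$ --- this is \cite{GPW09}[Proposition~2.6] applied to the normalised probability-measure-valued space.

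For the separating property on $\tilde{\mcM}$, suppose $P_1,P_2\in\tilde{\mcM}$ agree on every element of the polynomial algebra. By closure under products they in particular agree on every joint moment of any finite tuple $(\Phi^{n_i,\phi_i})_{i\leq k}$. The growth condition defining $\tilde{\mcM}$ is Carleman-type: since $|\Phi^{n,\phi}(\mfu)|\leq \|\phi\|_\infty\bar\mfu^{\,n}$, the $K$-th absolute moment of $\Phi^{n,\phi}$ under $P_i$ is bounded by $\|\phi\|_\infty^{K}\int\bar\mfu^{nK}dP_i$, and the $\limsup$ condition in $\tilde{\mcM}$ forces $\sum_K M_K^{-1/(2K)}=\infty$. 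Hence joint moments determine the joint law of any finite collection of polynomials, and a monotone class argument combined with the point-separation step above yields $P_1=P_2$.

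For the convergence-determining assertion, suppose $P_n\in\mcM_1(\U)$ satisfies $\int F\,dP_n\to\int F\,dP$ for every $F$ in the algebra, with $P\in\tilde{\mcM}$. Convergence on $\Phi^{1,\1}(\mfu)=\bar\mfu$ and its powers yields uniform moment bounds on the total mass, and convergence on polynomials $\Phi^{n,\phi}$ with $\phi$ chosen to probe large distances yields uniform control on the sampled distance profiles. Together these give tightness of $(P_n)$ in the Gromov-weak topology on $\U$; any weak subsequential limit lies in $\tilde{\mcM}$ (the moment bounds pass by Fatou), agrees with $P$ on the separating polynomial algebra, and hence equals $P$ by the previous step, so $P_n\Rightarrow P$. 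I expect the main technical obstacle to be this tightness argument: polynomials probe only finitely many sampled distances at a time, so translating polynomial convergence into a Gromov-weak compactness statement requires careful choice of test functions, which is precisely the point where the arguments in \cite{GPW09} and \cite{Gl12} are most delicate.
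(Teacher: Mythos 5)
Your proposal is correct and follows essentially the same route the paper takes: the paper gives no written proof but cites exactly your three ingredients --- the reconstruction/separation result of \cite{GPW09}[Proposition~2.6] for point separation via distance matrix distributions and total mass, the Carleman-type moment determinacy of \cite{EK86}[Proposition~4.4.6] to handle the unbounded polynomials under the growth condition defining $\tilde{\mcM}$, and the moment-convergence-implies-weak-convergence discussion after (4.4.21) there for the convergence-determining part. The tightness step you flag as delicate is indeed the part the paper likewise delegates to \cite{GPW09} and \cite{Gl12}, so there is no substantive divergence.
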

We conclude with this Lemma that:
\begin{equation}\label{e1359}
 \lfloor \mfU_t \rfloor (t) \eqd \lfloor \mfU_t'\rfloor(t) \, .
\end{equation}
This means that the $t$-tops of the both processes coincide in law at each time $t\geq0$.
\end{proof}
%

\section{Verification of the criterion in our models: Theorem~\ref{T.TVF.BRAN.PROP}}\label{S.VERCRIT}

The application of our criterion requires to check whether the list of assumptions in Theorem~\ref{T:BRANCHING} can be verified in a given situation.
We first give a detailed proof for the $\U$-valued Feller diffusion and then later provide the needed modifications for the spatial case the $\U^E-valued$ super random walk in Section~\ref{ss.extmc}.

\subsection{Verification of criterion for $\U$-valued Feller: Theorem~\ref{T.TVF.BRAN.PROP}}\label{ss.theoftheo}

\paragraph{\em Step 1: Why the setup applies.}
Due to Lemma~\ref{l.tr2} we make use of truncated polynomials in the martingale problem of Lemma~\ref{l.D_lin:tildeD}, to get a \emph{multiplicative domain}.
Two things are missing for the application of Theorem~\ref{T:BRANCHING}:
a uniqueness result and the linearity of the generator.
Both these facts are provided by Proposition~\ref{p.dual.ext} and Proposition~\ref{l.tr5}.

\begin{proof}[{\bf {\em Step 2}} Proof of Theorem \ref{T.TVF.BRAN.PROP}]\label{pr.t.tvF.bran}
\noindent
 We want to use Theorem~\ref{T:BRANCHING}. Fix a function $\varrho$ as in Lemma~\ref{L.EX} which satisfies Assumptions~\ref{a.1} and~\ref{a.2}.
 We have $\S_t = \U(t)^\sqcup$, which is a semigroup by Proposition~\ref{Inf-p.delphic} in \cite{infdiv}.
 For $s \leq t$ we have that $S_s \subset S_t$ trivially and the embedding is topologically consistent.
 Therefore we are in the setting of Section~\ref{s1501141603}.
 Moreover, Proposition~\ref{l.tr5} says that $D_t = \{  \mfu \mapsto \exp(-\Phi^{n,\phi \varrho_t}(\mfu)): \, \phi \in C(\R^{n\choose 2}),\, n \in \N \}$ is a $t$-multiplicative family on $\S_t$.
 The uniqueness of the martingale problem $(\tilde{A},\tilde{D})$, where uniqueness is understood in 
the sense specified in Theorem~\ref{T:BRANCHING} and $\tilde{D}$ is defined as in \eqref{e.tr6}, 
was shown in Proposition~\ref{p.dual.ext}. 
 It remains to establish the additivity of the generator in \eqref{eq:lin:gen} with respect to the multiplicative functions in $D_t$.
 But this is Proposition~\ref{l.tr5}.
\end{proof}

\subsection{Proof of Theorem \ref{t.recurfeller} }\label{ss.truncMGP}

We work here with the truncated martingale problem. First some preparations.

\begin{assumption}\label{a.3}
 There is a function $\hat{\varrho} \in C^1 (\R\times \R, \R)$ such that for all $n \in \N$:
 \begin{equation}\label{e1361}
  \varrho^{(n)}(t,\dr) = \prod_{1\leq i < j \leq n} \hat{\varrho}(t,r_{i,j}) \, .
 \end{equation} \qed
\end{assumption}

The following lemma is easy to verify.
\begin{lemma}
 Under Assumptions~\ref{a.1},~\ref{a.2} and~\ref{a.3} there is a function $\hat{\varrho} \in C^1(\R,\R)$ with $\hat{\varrho}|_{[0,\infty)} = 0 $ and 
 \begin{equation}\label{e1362}
  \varrho^{(n)} (t, \dr) = \prod_{1\leq i < j\leq n} \hat{\varrho}(r_{i,j} - 2 \underline{\underline{t}}) \, , \text{ for all } t, \dr, n \, .
 \end{equation} \qed
\end{lemma}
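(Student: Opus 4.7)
The plan is short and essentially combinatorial; I expect no real obstacle, since the three assumptions combine to pin down the form of $\varrho^{(n)}$ almost immediately. The idea is to use Assumption~\ref{a.3} to pass from the array variable $\dr$ to a single scalar variable, and then use the diagonal translation invariance of Assumption~\ref{a.2} to strip off the dependence on $t$.

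First I would specialize Assumption~\ref{a.3} to $n = 2$ and set $\varrho_0(t, r) := \varrho^{(2)}(t, r) \in C^1(\R \times \R, \R)$, so that Assumption~\ref{a.3} becomes $\varrho^{(n)}(t, \dr) = \prod_{1 \leq i < j \leq n} \varrho_0(t, r_{ij})$ for all $n$. Applying Assumption~\ref{a.2} in the case $n = 2$ then yields
\begin{equation*}
\varrho_0(t, r) \;=\; \varrho_0(t + c, r + 2c) \qquad \text{for all } c \geq 0 \text{ and all } (t,r) \in \R^2,
\end{equation*}
which says that $\varrho_0$ is constant along every half-line $\{(t + c, r + 2c) : c \geq 0\}$ and hence, by running the argument from either endpoint, along the full line $\{r - 2t = \mathrm{const}\}$.

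Next I would define the candidate by $\hat{\varrho}(x) := \varrho_0(0, x)$ for $x \in \R$, which is automatically $C^1(\R, \R)$. To show $\varrho_0(t, r) = \hat{\varrho}(r - 2t)$ for every $(t, r)$ I would treat the two signs of $t$ separately: for $t \geq 0$, take the base point $(0, r - 2t)$ and shift by $c = t \geq 0$ to get $\varrho_0(0, r - 2t) = \varrho_0(t, r)$; for $t < 0$, take the base point $(t, r)$ and shift by $c = -t > 0$ to get $\varrho_0(t, r) = \varrho_0(0, r - 2t)$. In both cases $\varrho_0(t, r) = \hat{\varrho}(r - 2t)$. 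The vanishing statement $\hat{\varrho}\big|_{[0, \infty)} \equiv 0$ is then immediate from Assumption~\ref{a.1} specialized to $n = 2$, $t = 0$: $\varrho_0(0, r) = 0$ iff $r \geq 0$.

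Finally I would re-insert the product structure to obtain the general-$n$ formula
\begin{equation*}
\varrho^{(n)}(t, \dr) \;=\; \prod_{1 \leq i < j \leq n} \varrho_0(t, r_{ij}) \;=\; \prod_{1 \leq i < j \leq n} \hat{\varrho}(r_{ij} - 2t),
\end{equation*}
for every $t$, $\dr$, $n$, which is the displayed identity (with $2\underline{\underline{t}}$ read entrywise). The only point that requires mild care is the two-sided invariance in the reduction step above; apart from that, the lemma is a direct unpacking of the three assumptions, so there is no substantial obstacle to overcome.
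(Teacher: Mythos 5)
Your overall strategy (reduce to the two-variable factor via Assumption~\ref{a.3}, then strip off $t$ via the diagonal invariance of Assumption~\ref{a.2}) is the right one, but the specific anchoring $\hat{\varrho}(x):=\varrho_0(0,x)$ does not work, and the verification step breaks exactly in the region that matters. Two problems. First, by Assumption~\ref{a.1} the slice $t=0$ satisfies $\varrho^{(2)}(0,r)=0$ for \emph{every} $r\in\bbD_2=[0,\infty)$ (since $r\geq 0=2t$), so the $t=0$ slice carries none of the nonzero values of $\varrho$; your candidate is determined by the assumptions only where it must vanish. Second, for $x<0$ the point $(0,x)$ lies outside $\R\times\bbD_2$: the domain in Assumption~\ref{a.1} is $\R\times\bbD_n$, and Assumption~\ref{a.2} is only asserted for $\dr\in\bbD_n$. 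Even if you read $\varrho_0$ as the ambient $C^1(\R\times\R)$ function of Assumption~\ref{a.3}, its values at negative second coordinate are not constrained by Assumptions~\ref{a.1}--\ref{a.2}, so neither the "full-line constancy" nor the shift identity $\varrho_0(0,r-2t)=\varrho_0(t,r)$ is available when $r<2t$ — which is precisely the set where $\varrho^{(2)}(t,r)\neq 0$.

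The repair is a one-line change: anchor on the axis $r=0$ rather than $t=0$. Set $\hat{\varrho}(x):=\varrho^{(2)}(-x/2,\,0)$. For any $(t,r)\in\R\times[0,\infty)$, Assumption~\ref{a.2} applied at the base point $(t-r/2,\,0)\in\R\times\bbD_2$ with shift $c=r/2\geq 0$ gives $\varrho^{(2)}(t,r)=\varrho^{(2)}(t-r/2,0)=\hat{\varrho}(r-2t)$; every base point and every shifted point now lies in the legitimate domain, for all signs of $t$. Smoothness of $\hat{\varrho}$ follows by composing with the affine map $x\mapsto(-x/2,0)$, and Assumption~\ref{a.1} gives $\hat{\varrho}(x)=\varrho^{(2)}(-x/2,0)=0$ iff $0\geq -x$, i.e.\ exactly on $[0,\infty)$. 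Your final step (re-inserting the product over pairs, using that $r_{ij}\geq 0$ for $\dr\in\bbD_n$) is fine as written. The paper states this lemma without proof, so there is nothing to compare against; note only that Lemma~\ref{l.tr3} already performs the $t$-stripping for general $n$, so an alternative route is to combine that lemma with Assumption~\ref{a.3} specialized to $n=2$.
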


\begin{proof}[Proof of Theorem~\ref{t.recurfeller}]\label{pr.t.recurf}
 We will use Assumptions~\ref{a.1},~\ref{a.2} and~\ref{a.3} for the truncation functions $\varrho^{(n)}$.
 Using Assumption~\ref{a.2} and consider $\psi \equiv 1,\, \phi \equiv 1$ by \eqref{e.tr4}
 \begin{align}\label{e1363}
  \tilde{\Omega}  \psi(t) \Phi^{n,\phi \varrho_t} (\mfu) &=  an \Phi^{n, \varrho_t}(\mfu) + \frac{b}{\bar{\mfu}} 
\sum_{1\leq k<l\leq n}\Phi^{n, \varrho_t \circ \theta_{k,l}} (\mfu) \, .
 \end{align}
Under Assumption~\ref{a.3} then we can make the following elementary calculation for $1\leq k < l \leq n$:
\begin{align}\label{e.tr1}
 \varrho_t^{(n)} \circ \theta_{k,l} (\dr) & = \prod_{1\leq i< j \leq n} \hat{\varrho}\left( (\theta_{k,l} (\dr))_{i,j} - 2t  \right) \\
 & = \prod_{1\leq i< j \leq n, i\neq l, j \neq l} \hat{\varrho}(r_{i,j} - 2t) \cdot \prod_{i\neq l} \hat{\varrho}(r_{k\wedge i, k \vee i} -2t)  \, .\label{e.tr1b}
\end{align}
This is a function which is independent of $r_{ij}$ where either $i=l$ or $j = l$. Thus, $\frac{b}{\bar{\mfu}} 
\sum_{1\leq k<l\leq n}\Phi^{n, \varrho_t \circ \theta_{k,l}} (\mfu)$ is a polynomial of degree $n-1$.
Using Proposition~\ref{p.dual.ext} we know that
 \begin{equation}\label{e1601141601}
  \Phi^{n,\varrho_t} (\mfU_t) - \Phi^{n,\varrho_0} (\mfU_0) - \int_0^t b{n \choose 2} 
\Phi^{n-1,T(\varrho_s)} (\mfU_s) + na \Phi^{n,\varrho_s}(\mfU_s) \, \dx s
 \end{equation}
is a martingale, where $T(\varrho_s)$ is the transformation outlined in \eqref{e.tr1}. 
More precisely
\begin{equation}\label{e1364}
 \sum_{1\leq k < l \leq n} \Phi^{n,\varrho_t \circ \theta_{k,l}}(\mfu) = {n \choose 2} \bar{\mfu} \int \nu^{n-1, \mfu} (\dx \dr) \, \varrho_t^{n-1}(\dr) \prod_{2\leq i \leq m-1} \hat{\varrho} (r_{1,i} -2t) \hat{\varrho}(-2t) \, .
\end{equation}
In particular for $n=2$ the following is a a martingale
\begin{equation}\label{e1365}
  \Phi^{2,\varrho_t} (\mfU_t) - \Phi^{2,\varrho_0} (\mfU_0) - \int_0^t b \hat{\varrho}(-2s) \Phi^{1,\varrho_s^{(1)}} (\mfU_s) + 2a \Phi^{2,\varrho^{(2)}_s}(\mfU_s) \, \dx s  .
\end{equation}
Note that $\Phi^{1,\varrho_t^{(1)}}(\mfu) = \hat{\varrho}(-2t) \bar{\mfu}$.

Thus, we obtain the following ODE:
\begin{equation}\label{e.tr2}\begin{split}
 \partial_t \E[ \Phi^{2,\varrho^{(2)}_t}(\mfU_t) ] &= b \hat{\varrho}(-2t) \hat{\varrho}(-2t) \E[\bar{\mfU}_t] + 2a \E[ \Phi^{2,\varrho^{(2)}_t}(\mfU_t) ] \\
 & =b \left( \hat{\varrho}(-2t) \right)^2 e^{bt} \bar{\mfu}_0  + 2a \E[ \Phi^{2,\varrho^{(2)}_t}(\mfU_t) ] \, .
\end{split}\end{equation}
Let $m(n,t) = \Phi^{n,\1(r_{i,j} < 2t\ \forall i<j)} (\mfU_t)$ be the sum of the $n$-th power of the $2t$-families at time $t$.
Claiming that $m(2,0) = 0$ (which holds since $\hat{\varrho}(0)=0$ and $\hat{\varrho}$ is a continuous function by Assumption~\ref{a.1}) we obtain from \eqref{e.tr2} that
\begin{equation}\label{e.tr3}
 m(2,t) = e^{2at} a \bar{\mfu}_0 \int_0^t e^{-as} \left( \hat{\varrho}(-2s)\right)^2 \, \dx s \, .
\end{equation}
Now take a sequence of $\hat{\varrho}$ such that the limit $\hat{\varrho} \nearrow \1_{\R_+}$ holds point-wise. 
The integral on the right hand side of \eqref{e.tr3} converges to $a^{-1}(1-e^{-at})$.
The expectation of the left hand side convergences by monotone convergence.
Thus, we obtain for the expected sum of the squares of the subfamily sizes
\begin{equation}\label{e1366}
 \E [ \Phi^{2,\1(r_{12}< 2t)} (\mfU_t) ] = b \bar{\mfu}_0 \frac{1}{a}\left( e^{2at} - e^{at} \right) \, .
\end{equation}
In fact the second moment of the branching process satisfies \eqref{e.tr2} with $\hat{\varrho}(-2t) = 1$ as well.
\end{proof}

\section{Extensions to the marked case: Proofs of Theorem~\ref{TH.SUPWALK},~\ref{T.ACBRANCH} and their Tools}{\sectionmark{ Extensions to the marked case}}\label{ss.extmc}

We have already in Section~\ref{ss.histbranpro} generalized the basic concepts of the present work to the \emph{marked} case, so that we now work with objects in the framework for our criterion.
Next in Section~\ref{ss.vercrittree} we show how to formulate some further concepts needed for the proofs, as \emph{truncated} polynomials or \emph{smooth} truncation and then make up a list of things still to be proven, in order to be able to apply the criterion to the extensions to the spatial case.
This will conclude the proof of the main results.
This involves to have the tools we used in the non-marked case available in a suitably modified form.

Later in Section~\ref{ss.7.1} we have to show the tools can indeed be generalized to spatial versions as well and provide the necessary extensions of the tools lemma by lemma, proposition by proposition.
Then we can continue along the path of proof taken for the non-spatial model.
Then in Section~\ref{ss.2592} we prove as the \emph{key} element, namely the properties of the martingale problem, which is the spatial version of Proposition~\ref{p.dual.ext} (FK-duality) and Proposition~\ref{prop.1426} (wellposedness).

\subsection{Verification criterion for $\U^E$-valued super random walk: Theorem~\ref{TH.SUPWALK} and~\ref{T.ACBRANCH}}\label{ss.vercrittree}
Now we are ready to check the generator criterion for the branching property of the {\em $\U^{D^\ast}$-marked genealogy-valued super random walk.}
Note that the $ E-$marked case does not pose problems since then the marks are \emph{not} truncated.
The work has to be done to include \emph{historical information} on ancestral path so that we can lift the argument we gave before to the \emph{spatial process} with marks which are paths. 
This \emph{lifting} we address in the sequel.
The key concepts we need are {\em truncated} and {\em smoothly truncated} polynomials.

\paragraph{Truncation}
To study the truncated states we have introduced in Section~\ref{ss.histbranpro} we need truncated polynomials.
This means we first have to introduce the {\em truncated} monomials in the spatial context, recall here \eqref{a.101}, by proceeding analog to  \eqref{e1319} and adding now the indicator on distances as before and another truncation map acting on the {\em mark variables}.
We begin by looking at the time-inhomogeneous situation, before we pass to the time-space process and then to the \emph{adjusted paths}.

Consider now polynomials based on the function $ \chi $ of the mark which evaluates the path at a tuple of time points $ 0 \leq t_1< \cdots t_k < \infty $.
For these polynomials we have to define now truncated polynomials which do not contain information about times before $ t $, for some $ t \geq 0 $.
Recall we have (recall \eqref{1372}, \eqref{1373}) a truncation operation which has two components which separately act on the distances and on the marks.
Therefore we will in the definition of {\em truncated polynomials} use polynomials build on $ \phi $ and $ \chi $ on which we can then let the truncation act \emph{separately} on $ \phi $ and namely of product form.
This means the truncation has the form that the $ t-$truncated polynomials $ \Phi_t^{\phi, \chi} $ given via $ (\phi, \chi) $ is \emph{truncated} by switching to a polynomial given by $ (\phi^{(t)}, \chi^{(t)}) $:
\begin{equation}\label{e1446}
(\phi, \chi) \rightarrow (\phi^{(t)}, \chi^{(t)}), \mbox{  with  } \phi \rightarrow \phi \cdot 1_{\{r_{i,j} \leq \; 2t\}}
\end{equation}
and the transformation of $ \chi $ is as follows.

In the mark function we "truncate" the monomial by replacing the $ \chi $ by taking {\em $ \chi $ applied to the truncated path} i.e. $ \chi^{(t)} (\underline{v}) = \chi(\underline{v}^{(t)}) $ (recall \eqref{1373} and the comment afterwards.
This is again a polynomial, but now with a different $ \chi $, which we call $ \chi^{(t)} $, for a $ t- $truncation.

Summarized this means that for marks we now consider:

\label{e3803}\begin{equation}
\wh \chi^{(t)} (t,v)= \wh \chi(t,\underline{v}^{(t)}), \;  \underline{v}=(v_i)_{i=1, \ldots, n} \; , \quad v_t^{(t)} \mbox{  given by \eqref{1373}   }.
\end{equation}

\begin{remark}\label{r.1454}
This function $ \chi^{(t)} $ is not of the form $ \chi \cdot \varrho $ as with $ \phi $ and the truncation there. 
The reason is that this would not fit with the truncation extended to $\U^V$ according to the reasoning we explained in Remark~\ref{r.1247}.
Note also we have individuals with small distances associated with the tree top which corresponds to the innovation part of the evolution, while for the path this is an  increment.
What we need is to just remove the information on the complementary part. \qed
\end{remark}

We have to {\em lift this truncation} now to the case of $ D^\ast $ marks and deal with the time-shift operation.
Here of course we do this by just shifting the truncated path by $ -t $ to obtain the truncated adjusted path, so that we need no new notation.

\paragraph{Smooth truncation}
The truncation map on path $ \chi \rightarrow \chi^{(t)} $ involves in general a change in value since the values evaluated before time $ t $ are replaced by the ones derived from $ v(t) $ instead of $ v(t_k) $ for $ t_k < t $.
This means if we vary $ t $ we get a change in $ t \rightarrow \chi^{(t)} (v) $ in fact typically a jump at $ t $ if $ t $ is a jump point of the path $ v $ or a limit point of such jump points.
Furthermore recall that these polynomials based on evaluation functionals at time points are \emph{not} continuous on $D(\R,E)$.

We note that the path process generator involves a derivative w.r.t. the explicit time only so that we might think we need not smooth this part as in the case of $\phi$.
\textit{However we use this by using the truncation level $t$ being equal to current time, so} that we need the total differential so that we have for $\wh \chi$ smoothness in both variables, explicit time and location.
Note that for the case $ V=E $ there is \emph{no} need for truncation of the mark, this is different for $ V=D(\R,E) $.

{\em Explanation}
Fix a function $ \wh \chi $, i.e. we fix the sequence of the $ (t_k)_{1,\cdots, m} $ where paths are evaluated.
Note that these are numbers in $ \R $ with $ -\infty<t_1<t_2<\cdots<t_m<+\infty $.
Therefore if $ t $ increases we keep evaluating path which satisfy that we evaluate for $ t \geq 0 $, points which observe for $ t_k $ with $ t_k<t $ only values which after $t$-truncation are all equal to $ v(t) $ for the truncated object.
However before we might have had other positions.
Therefore all positions change to the position at $ t $, hence we get here a jump at the truncation time if the path at $ t $ is different from the position at the $t_k < t$ and this change jumps with a varying $t$ if there are jumps at time $t$ or close to it.
This jump we have to dampen in a differentiable way in time, if $ \underline{v} $ is such that such a jump occurs, i.e. if the $ v_k $ are not such that they are constant for $ s \leq t $.

In this spirit above we have to define the \emph{smooth truncation} operator, i.e. we have to extend now the map $ \varrho: \R \times \D_n \longrightarrow [0,1] $ to a map $ \varrho : \R \times (\D_n \times V^n) \rightarrow [0,1] \times V_t$  of the form $ (s, \underline{\underline{r}}, \underline{v}) \longrightarrow (s, \underline{\underline{r}}\;  \varrho^1_t,      \varrho_t^2(\underline{v}^{(t)})) $ and then formulate the analogues of the Assumptions~\ref{a.1},~\ref{a.2}.
How to choose $\varrho^2_t$?

We note that $ \varrho^2_t $ must then provide a sliding window of the ancestral path with a window defined w.r.t. the current time.
As function of the distances we want to preserve the properties in the non spatial case and {\em add} properties as a function of the mark.

\paragraph{(1)}
Return to the martingale problem and the needed modification of Assumption~\ref{a.1}. Here we have removed every information on the ancestral path time $ h $ or more back and this cutoff we now approach smoothly from the top.
What smoothness properties do we have to impose now?

We adapt the smoothness property to the special form we have for the test functions of the mark.
The function is in $ C^1 $ as function of the {\em explicit time variable}.
As a function of the \emph{marks} we have to dampen the jump explained above.
We therefore have to interpolate here between the two values.
We consider the {\em convex combination} of $ \chi^{(s)} $ with $ s \geq t $.
We generate this convex combination by taking a function $ \wt \varrho_t^2 (s), s \geq t $ which takes values in $ [0,1] $ is in $ C^1_b $, monotone decreasing with $ \intl_t^\infty \wt \varrho_t^2 (s) ds=1 $.
Therefore we now have that $ \chi^{(t)} $ is a function of $ (\underline{v}^{(s)})_{s \geq t} $ which we denote by $ \varrho^2_t(\chi) $ so that now
\begin{equation}\label{e1487}
(\phi, \chi) \rightarrow (\phi \varrho^1_t, \varrho^2_t (\chi)).
\end{equation}
This is easily lifted to the time-space process replacing $ \chi $ by $ \wh \chi $, with $ \wh \chi(s,\cdot)= \psi(s) \chi(\cdot) $.

We have to pass now to {\em functions of the adjusted paths}.
We achieve this by just considering for current time $ T $, the $ (-T)$-shifted function $ \wt \varrho_t^2(\cdot) $.
This now gives us the modification needed in Assumption~\ref{a.1}.

\paragraph{(2)}
In order to get the marked version of Assumption~\ref{a.2} we proceed as follows.
If we want to $ (t+c)$-truncate instead of $ t$-truncation, hence the basic function $ \wt \varrho_t(\cdot) $ is shifted by $ -c $, i.e.
\begin{equation}\label{e2042}
\varrho_t(s)=\varrho_{t-s}(0)=\varrho_0(t-s).
\end{equation}

\paragraph{Test-functions for martingale problems}
Consider the \emph{smoothly truncated} test function on $ \U(t)^\sqcup $ of the forms:

\begin{equation}\label{e1372}
h_t(\mfu)=\mbox{  exp  } (-\Phi^{n,\phi\varrho^1_t, \varrho^2_t (\wh \chi)}), \mbox{  with  } \phi,\chi \geq 0,
\end{equation}

\begin{equation}\label{e1371}
F(t,\mfu)= \Psi (t) \mbox{ exp } (-\Phi^{n,\phi\varrho^1_t,  \varrho^2_t(\wh \chi)}).
\end{equation}

We have to calculate the generator now for a function for $ n \in \N, \phi, \chi \geq0 $ and $ \phi \in C_b^1(\R^{n \choose 2}, \R) $ and $ \wh \chi \in b\CB(\R \times V, \R) $ of the form as constructed in \eqref{e1322}, which are built from functions $ \chi^k_i \in b\CB(E,\R) $ and functions $ \Psi_k \in C^1_b(\R,\R) $, furthermore $ \Psi \in C_b^1(\R,\R) $ and $ \varrho_t $ satisfies for all $ t \geq0 $ respectively $ \varrho_\cdot(\cdot) $ the path-marked version of the Assumption~\ref{a.1} and~\ref{a.2}. 
This will be given in Section~\ref{ss.7.1}.

We can now define $ \D_{\rm lin} $ for the spatial case as the set of truncated polynomials again, but now we have to include also the spatial test function in the specification (compare \eqref{a.101}).

The first task is to verify that we can choose the set $ \wt D $ as the set of all functions of the form above, and as the set $ D_t $ the set of $ t- $multiplicative functions on $ S_t $ the functions in \eqref{e1371}.

This has to be lifted to $ D^\ast $ now.
We see that we can lift the expression above easily by replacing $\wh \chi  $ respectively $ \varrho^2_t(\wh \chi) $ by their associated function of the adjusted path, i.e. $ (\wh \chi)^\ast $ resp. $ (\varrho^2_t (\wh \chi))^\ast $ shifted by $ -T $ if $ T $ is the current time to obtain finally the truncated function:
\begin{equation}\label{e1600}
F^\ast.
\end{equation}
\paragraph{Remaining tasks}
As we pointed out in order to lift the argument in Section~\ref{ss.T:BRANCHING},~\ref{s1501141604b}  to the spatial case we have given the recipe how to lift test functions which are truncated.
What remains to be done?
The function $ F^\ast $ has to be shown to be {\em $ t-$ multiplicative} and then we need to verify the properties w.r.t. to the \emph{wellposedness} of the martingale problem.
We formulate these two points and the other needed extensions next to prepare the conclusion of the proof as given earlier with the concepts introduced in this  Section~\ref{ss.vercrittree}, which amounts to carrying out the arguments for the claims we used above to verify the criterion in the sequel.

{\bf (i)} This is again the $ t-$additivity of the {\em truncated} polynomials, which also holds including now polynomials as function of the mark, which is {\em truncated} as well, but it will turn out that the point is that we still never sample points from different $ 2t-$balls which contribute since we work with $\phi \cdot \chi$.

{\bf (ii)} We need that with $ \wt D $ as test function we get a solution of the $ (\Omega^{\uparrow,\rm anc}, \wt D)$-martingale problem and that we have uniquely determined laws of the $ t-$tops of the solutions.

For \textbf{(ii)} the ideas are as follows.
For the first point we need the spatial version of Lemma~\ref{l.D_lin:tildeD} in combination with a version of Lemma~\ref{L.EX}.
The uniqueness property follows again if we establish the marked version of Proposition~\ref{p.dual.ext}. 
This means we have to define the {\em dual process} first.

Here we replace the coalescent by the spatial coalescent which is a $ E- $marked partition valued process equipped with a {\em distance matrix} and a \emph{vector} recording the {\em path of the "individuals"}, which are the initial partition elements containing one element.
In the Feynman-Kac term we have the joint occupation time of partition elements sitting in the same sites of $ E $.

The final point is to show the $ t-$ multiplicativity of $ h_t $, here we need the spatial version of Theorem (\ref{Inf-p.trunc.poly}) in \cite{infdiv}.
Finally we have to {\em calculate $ g $ and show its $ t- $additive}, i.e. the spatial analog of Proposition~\ref{l.tr5}.

\emph{All these extensions to the spatial case of statements given in Section~\ref{ss.theoftheo} to the non-spatial genealogy-valued Feller diffusion will be given in Section~\ref{ss.7.1} and~\ref{ss.2592}.}
Once we have the generalized statements the argument given above is closed.

\subsection{Verifying the key Lemmata, Propositions in their spatial version}\label{ss.7.1}
We recall Section~\ref{ss.vercrittree} where we saw how to apply our criterion requires to verify the spatial versions of the Lemmata and Propositions of Section~\ref{ss.theoftheo}.
Start with some preparations needed involving some important  observations concerning the form of the action of the generator on smoothly truncated polynomials, which is basic.
Then it remains to show why the spatial versions of the key Lemmata and Propositions hold, and then we have to explain why the flow of arguments given in the subsections above can be modified appropriately.
We also need to show the ancestral path-marked process exists and is unique characterized by a \emph{well-posed martingale problem} both which we defer to the next subsection.

\paragraph{Generator action on truncated polynomials}\hfil\\
The key point was to operate with the generator action on \emph{truncated} polynomials and we had to develop a \emph{smoothed} version of this operation to be able to apply stochastic calculus.
We consider first the time-inhomogeneous version, i.e. marks $ V=D^+ $, and then pass to $V= D^\ast $.

\textbf{(i)} 
If we consider a polynomial it observes the state on a geographically finite window.
Therefore the measure restricted to this set is a finite measure and two of the operators, the operators $ \Omega^{\uparrow, \rm grow}, \Omega^{\uparrow, bran}_\xi $ act as before on a polynomial and also for $ exp(-\Phi^{m,\phi,\chi}) $ we have the same expressions we dealt with in the previous sections and therefore we can work with the smoothed truncation as before after carrying out the lifting of the functions to ones on $\U^V$.
The only point is to control the total mass in the window of observation depending of course on the potentially {\em infinite} mass on all of $ E $ (on finite $ E $ or with finite initial mass nothing changes).
Therefore we focus below mainly on the {\em new} operator $ \Omega^{\uparrow, \rm mig} $ or $ \Omega^{\uparrow, \rm anc} $ in particular if they are acting on \emph{truncated} polynomials and on the {\em behaviour of the total masses} in the spatial finite window of observation.

This latter point however is a feature of the total mass process which in itself is a Markov process, namely the super random walk for which these issues are by now well understood and we refer here to the needed facts in the literature, see \cite{LS81}, \cite{GLW05} for details of such arguments which we don't copy here.
Hence it remains the first issue.

The generator is acting on truncated polynomials as follows.
Here the reader should recall the actions in the case without truncation, the point 2 below \eqref{e1181}.
The new element in the generator calculations compared to Section~\ref{ss:pr:dual} is now the fact that we want to work with {\em truncated} polynomials, \emph{truncated in addition in the mark} rather than the truncated polynomials we had before.
Since we have smoothed the truncation via the $ \varrho_t $ we can again use calculus and obtain now as additional terms the ones which arise from the operator $ \frac{\partial}{\partial t} $ on the level of the explicit time coordinate with which the $ \U^V- $valued process was augmented as well as $ \frac{\partial}{\partial s} $ which is part of the generator of the path process which was also augmented with an explicit time coordinate.
Now we get:
\begin{align}\label{e2005}
\Omega^{\ast, anc} \Phi^{n,\phi \varrho^1_t,\varrho^2_t (\wh \chi)}(t,\mfu)&= \left[ \suml_{\xi \in E} \; \;  \suml_{1 \leq k < l< n} \; \frac{b}{\overline{\mfu}_\xi} \; 1_{\{v_k(t)=v_\ell(t)\}} \; (\underline{v}) \; \Phi^{n,(\phi \varrho^1_t, \varrho^2_t(\wh \chi)) o \theta_{k,l}} \; (t,\mfu)\right]\\
&+ \Phi^{n,2(\overline{\nabla}_1\phi)\varrho^1_t,\varrho^2_t(\wh \chi)} \; (t,\mfu) \nonumber \\
&+ \Phi^{n,\phi(\frac{\partial}{\partial t} \varrho^1_t),\varrho^2_t(\wh \chi)} \; (t,\mfu) + \Omega^{\uparrow,\rm anc} \; \Phi^{n,\phi \varrho^1_t,\varrho^2_t(\wh \chi)}(t,\mfu), \nonumber 
\end{align}
where $\overline{\nabla}_1 $ is w.r.t. to the distance variables.
It remains to write out the \emph{last term} explicitly.

Since we have now the $ \varrho_t^2$ smooth truncation this leads to a convex combination of such product functions resulting after the action of $ \Omega^{\uparrow, \rm anc} $  in a convex combination of product functions.
We also have to take into account the contribution to the generator of the time change due to the weight function leading to the operator we call $ B $ and which is again resulting in a polynomial.
We have to write this out now in formulas.

We note that $ \varrho_t^2(\wh \chi) $ is a function which is an integral over functions which are a product of factors of evaluations at \emph{specific times} of the path.
Through $ s-$truncation some of these factors are now evaluated at different times.
This concerns factors to be evaluated at times $ t_k \wedge s $ which are replaced effectively by the path at the truncation level if $ t_k  $ lies before the truncation time.
Nevertheless this is a new function of the same type.
Now the same operator $ A^\ast_k $ acts on this {\em new function} $ \wh \chi^\prime $.
We saw in \eqref{e.1157} how the operator acts on functions of the path.

Similarly the time evolution leads to a change in $ \varrho^2_t (\chi) $ by changing the applied weight function which results in a change of the mixing measure of the truncations of $ \wh \chi $ leading to an integral with weights $ (\wt \varrho^2_t)^\prime $, defining an \emph{operator $ B $}.
Define $ (\varrho^2_t)^\prime $ as the operation where the weight function $ \wt \varrho^2_t $ is replaced by $ \frac{\partial}{\partial t} \wt \varrho^2_t $.
We get then for the last term in \eqref{e2005}:
\begin{equation}\label{e2259}
= \suml_{k=1}^n (\Phi^{n,\phi \varrho^1_t,A^\ast_k(\varrho^2_t(\wh \chi))} + \Phi^{n,\phi \varrho^1_t, B\varrho^2_t \wh \chi})
 \mbox{   (recall (\ref{e.1157}))}.
\end{equation}

\textbf{(ii)} 
   From this expressions above we have to pass now to the one with marks in $ D^\ast $ and identify the action of the operator of the mark evolution.
This action arises from evolving the path on $D_{0,t}$ and then shifting it back by $t$.
   
This means after evolving for time $t$ replacing $ \varrho^2_t(\wh \chi) $ by $ (\varrho^2_t(\wh \chi))^\ast $
(recall here \eqref{e1487},\eqref{e2042}) where we introduce for a function $ \chi $ on $ D(\R,E) $ a map $ \ast: \chi \to \chi^\ast $ where the value of $ \chi^\ast $ on $ D(\R,E)^\ast $ is the number obtained on $ D_{-t,0} (\R,E) $ as value after shifting the path by $ -t $.
    
    To clarify this we need to explain especially the action of $ \Omega^{\uparrow,\rm anc} $ on truncated polynomials.
    Our goal is now to write out the formula for (dealing with the explicit time variable in the path process):
    
    \begin{equation}\label{e1979}
  \Omega^{\uparrow, \rm anc-sh}\left(\Phi^{m,\phi \varrho_t^1,\varrho_t^2 (\wh \chi^\ast)}\right) =
\Omega^{\uparrow, \rm anc} \left(\Phi^{m, \phi \varrho^1_t, \varrho^2_t(\wh \chi)}\right).
    \end{equation}
    
    Now we have to adapt the expressions and act on elements of $ D^\ast$.
    Observe however that the expression in the quoted equation gives the evolution of a function from $ D_{0,t} $ truncated at the present time.
    This action is now shifted back by $ t $ onto elements of $ D_{-t,0} $.
    
    We can write the generator of the mark evolution as \eqref{e1979} plus shift in the form (note in the time-space process $ A^\ast_k $ does not explicitly depend on $ t $ and furthermore the operators $ B $ and $ ^\ast $ commute):
    \begin{align}\label{e1267}
    = \suml_{k=1}^n (\Phi^{n,\phi \varrho^1_t,(A^\ast_k(\varrho^2_t(\wh \chi)))^\ast} + \Phi^{n,\phi \varrho^1_t, (B\varrho^2_t \wh \chi)^\ast}) + \Omega_t^{\rm shift} \left(\Phi^{m, \phi \varrho^1_t, \varrho^2_t(\wh \chi)} \right)
     \mbox{   (recall (\ref{e.1157}, \ref{e1367}))},
    \end{align}
    where $ A^\ast_k $ arises from transitions at the $ k- $th sampled individual including actions of migration, recall that $ \varrho_t $ depends on both distances and marks as written out explicitly.
    Note at this point we see already that the term is a polynomial, however for its properties we need the following explicit formula:
    \begin{equation}\label{e2046}
    (A^\ast_k(\varrho^2_t(\wh \chi)))^\ast = (A^\ast_k \wh \chi)^\ast: = A^{\ast,\ast}_k(\wh \chi), \quad
    B \varrho^2_t(\wh \chi) = (\varrho^2_t)^\prime(\wh \chi).
    \end{equation}
   
    We need now that the rhs. of \eqref{e2046} "is a truncation", i.e. is zero if two sampled points are from different $ t- $balls.
    This is the case since the smoothly truncated case contains in each term the factor which is zero for distances above the truncation level and note that this is not affected by the $ -t$ shift.
    
    Note that the form of the generator implies that acting on polynomials we have a polynomial again with a new function $ \phi,\chi $.
\begin{remark}\label{r.2370}
Recall the approach given in Remarks~\ref{r.1244},~\ref{r.1364}.
Then we would get the following.
The truncated marked monomial is defined via
\begin{equation}\label{e.PhiTruncMarked}
 \Phi_t^{n,\phi,\chi}(\mfu) = \Phi^{n,\phi\cdot c_t, \chi \cdot i_t}(\mfu), \ \mfu \in \U^{D^\ast},
\end{equation}
for $t \geq 0$ and $D^\ast$ as in \eqref{e.Dast}.
Here, $c_t$ as in \eqref{e1319} and $i_t: V^n \to \R$ via
\begin{equation}\label{e2393}
 i_t(v_1, \dotsc, v_n) = \prod_{j=1}^n \1(v_j(s) = v_j(-t)\ \forall s < -t ) .
\end{equation}
It is easy to see that monomials of the form \eqref{e.PhiTruncMarked} are additive on $S_t = \U^{D_t^\ast}$. \qed
\end{remark}

\paragraph{\emph{Spatial version of Lemma~\ref{L.EX}}}\mbox{}\\
Reviewing the proof in the non-spatial case, we see that we have to explain how to modify the function $ w $ from \eqref{e1342c}.
Here the new element is the term arising via the generator of the {\em mark evolution} for which we have to give the argument.
However this term is again a drift term i.e. a {\em first order operator}.
Hence we get an additional term appearing in the expression, which however gives in the expression for $ w $ just another truncated  polynomial of the order $ n $ as we saw above in \eqref{e1267} and \eqref{e2046}.

Next we have to replace the {\em moment calculations} for the Feller diffusion by those for the super random walk to account for the fact that polynomials are not bounded.
We have to obtain bounds on the expectations of polynomials of order $ n $ to conclude they remain finite after finite time and to show that they still determine laws in spatial models.
These calculations are well known in the literature, see \cite{D93}. 
If we want moments \emph{of all orders} for positive times which are law determining we need stronger restrictions then just requiring a finite Liggett-Spitzer norm , namely $ \sup\limits_n \Big((n!)^{-1} \suml_{i \in \Omega} (\overline{\mfu}_i)^n \gamma_i \Big) < \infty $ (recall here \eqref{e1373}), or we have to pass to {\em local} martingale problems on the Liggett-Spitzer space.
Then the argument proceeds as before, we omit the standard details.

Therefore the argument goes through again in the spatial case.

\paragraph{\emph{Spatial version of Lemma~\ref{l.D_lin:tildeD}}}\mbox{}\\
We deal now with test functions $ \psi(t) \exp(-\Phi^{m,\phi \varrho^1_t,(\varrho^2_t(\wh \chi))^\ast}), (\psi \in C_b^1(\R,\R)) $ incorporating now the time derivate operator as well, recall \eqref{e1344} and \eqref{e1344b}.
The only change is that on the rhs. of \eqref{e1345} now the generator of the mark motion appears as well besides $ \Omega^{\uparrow, \rm grow}, \Omega^{\uparrow, \rm bran} $ and $ \frac{\partial}{\partial t} $ which give the contribution exhibited in \eqref{e2005} and \eqref{e1267}, \eqref{e2046}.

The calculation from \eqref{e1344} - \eqref{e.tr10} carries over once we understand the new term namely the operator $ \Omega^{\uparrow, \rm anc} $ since the other terms are truncated polynomials and are $ t- $ additive.

Now the new term, which as a first order operator lowers the order by one, but since we have coefficients which are linear, i.e. first order monomials, we get a polynomial of the same order back again, where coefficients are given via the $ a(\cdot,\cdot) $, see here formula \eqref{e1267} combined with \eqref{e2046} where this term is analyzed in detail.  
Then the argument works as before.

\paragraph{\emph{Spatial version of Lemma~\ref{l.tr2}}}
\begin{lemma}[Polynomials: approximation by truncated polynomials]\label{l.tr2.marked}
 Let $t>0$. Suppose Assumptions~\ref{a.1} and~\ref{a.2} hold for functions $\varrho = \varrho^{(n)} : 
\bbR^{n\choose 2} \to [0,1]$, $n\in \N$. 
Then for any truncated polynomial 
$\Phi_t^{n,\phi} \in \Pi(C^1(\D_n))$, we can find a sequence of polynomials $(\Phi^{n,\phi_N \varrho_t, (\varrho_t^{2,N}\chi)^\ast})_{N \in \N}$ in the 
family of polynomials $\{\Phi^{n,\phi \varrho_t, (\varrho_t^{2,N} \chi)^\ast}: \, \phi \in C_b^1(\bbD_n, \R^+),\, \chi \in C_b(V^\ast,\R^+), n \in \N\}$ such that for 
all $\mfu \in \U$:
 \begin{equation}\label{e1318.marked}
   \Phi^{n, \phi_N \varrho_t, (\varrho_t^{2,N} \chi)^\ast} (\mfu) \nearrow \Phi_t^{n,\phi, \chi}(\mfu)\,, \text{ as } N \to \infty.
 \end{equation} \qed
\end{lemma}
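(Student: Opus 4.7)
The plan is to mimic the proof of Lemma~\ref{l.tr2} verbatim on the distance variables and to add an independent smoothing on the mark variables that recovers the path-indicator $i_t$ of Remark~\ref{r.2370}. Since the integrand of the monomial $\Phi_t^{n,\phi,\chi}=\Phi^{n,\phi\cdot c_t,\,\chi\cdot i_t}$ is a product of a function of the distance array $\dr$ and a function of the marks $\underline{v}$, the two approximations may be carried out separately and combined inside the integral against $\nu^{\otimes n}$.

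For the distance factor I would use the construction from the proof of Lemma~\ref{l.tr2} without change. Fix a monotone $C^1$ cut-off family $g_N:\R\to[0,1]$ with $g_N\equiv 0$ on $(-\infty,0]$ and $g_N\equiv 1$ on $[N^{-1},\infty)$, and set
\[
\phi_N(\dr)\;=\;\frac{\phi(\dr)}{\varrho_t(\dr)}\prod_{1\le i<j\le n}g_N(2t-r_{ij}).
\]
By Assumption~\ref{a.1} the function $\varrho_t$ is strictly positive wherever $\phi\cdot c_t$ is non-zero, so $\phi_N\in C^1_b(\bbD_n)$, and $\phi_N\varrho_t\nearrow \phi\cdot c_t$ pointwise and monotonically on $\bbD_n$.

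For the mark factor I would exploit the convex-combination structure of the smooth truncation of Section~\ref{ss.vercrittree}. Recall $\varrho_t^{2}(\wh\chi)(\underline{v})=\int_t^{\infty}\wt\varrho_t^{2}(s)\,\wh\chi(\underline{v}^{(s)})\,ds$ with $\wt\varrho_t^{2}\in C^1_b([t,\infty))$, monotone decreasing, of total mass one. I would then choose a sequence $\wt\varrho_t^{2,N}$ concentrating on the endpoint $s=t$ (for instance supported in $[t,t+N^{-1}]$, still $C^1$ and monotone). For any fixed $\underline{v}$ the map $s\mapsto\wh\chi(\underline{v}^{(s)})$ is right-continuous at $s=t$ by construction of the truncation in \eqref{1373}, so $\varrho_t^{2,N}\wh\chi(\underline{v})\to\wh\chi(\underline{v}^{(t)})$ as $N\to\infty$; after applying the adjusted-path shift $(\cdot)^\ast$ of \eqref{e3803} and restricting to the marks in $D^\ast$ actually charged by $\nu$, this limit equals $\chi(\underline{v})\cdot i_t(\underline{v})$ in the sense of Remark~\ref{r.2370}.

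It then remains to pass to the limit inside $\int(\cdot)\,\nu^{\otimes n}$. Monotone convergence handles the distance factor, while the mark factor is controlled by dominated convergence with the bound $\|\chi\|_\infty$; both are integrable because the polynomial depends only on a finite spatial window and $\mfu\in\CE$ guarantees finiteness of the local mass via \eqref{e1373}. The main obstacle I expect is organizing these two approximations so that the combined integrand remains \emph{monotone} in $N$ (as the statement demands the sequence $\nearrow$ pointwise) and so that $(\varrho_t^{2,N}\chi)^\ast$ stays inside the admissible test-function class $\wh\Pi^\leftarrow$; the latter forces us to take $\wt\varrho_t^{2,N}$ genuinely $C^1$ rather than a mollifier of $\1_{[t,t+N^{-1}]}$, and the former is best handled by taking monotone $g_N$ and $\wt\varrho_t^{2,N}$ simultaneously and using $\phi,\chi\ge 0$ together with the one-sided continuity of $s\mapsto\wh\chi(\underline{v}^{(s)})$ provided by the adjusted-path device of \eqref{e1288}--\eqref{e1374}.
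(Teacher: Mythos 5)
Your proposal follows essentially the same route as the paper: the distance factor is handled verbatim by the $\phi_N=\phi/\varrho_t\cdot\prod g_N(2t-r_{ij})$ construction of Lemma~\ref{l.tr2}, and the mark factor by letting the smoothing weight $\wt\varrho_t^{2,N}$ concentrate at the truncation level and invoking the right-continuity of the path, which is exactly the argument the paper gives ("follows from the right continuity of the path"). Your extra attention to keeping the combined sequence monotone is a refinement the paper does not spell out, but it does not change the approach.
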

For a proof we have to deal only with the marked part, i.e. the approximation of the function $ \chi $ by its smooth truncations.
This is standard and follows from the right continuity of the path.

\paragraph{\emph{Spatial version of Proposition \ref{l.tr5}}}\mbox{}\\
Recall the formula for the homomorphism $g_f$ of our criterion in the non-spatial case in \eqref{e1320},\eqref{e1317}.
The change we need to provide is the new form of the $g_f$ of the criterion which is now a $g_{\phi,\wh \chi}$ and of which we have to show that it is \textit{$t-$additive}.
We have here (recall that this object here depends on $ (\phi\varrho^1_t,(\varrho^2_t(\wh \chi))^\ast) $), but we suppose this in the formula:
\begin{equation}\label{e1375}
g(t,\mfu)=\Omega^{\uparrow} \Phi^{n,\phi\varrho^1_t,(\varrho^2_t(\wh \chi))^\ast}+ \Omega^{\uparrow, \rm anc} \Phi^{n,\phi \varrho^1_t, (\varrho^2_t(\wh \chi))^\ast}, \mbox{   where   } \Omega^\uparrow = \Omega^{\uparrow, \rm grow} + \Omega^{\uparrow, \rm bran}.
\end{equation}
We have to see here still that the second term on the rhs., which is the new term, is {\em $ t- $additive}.
Additivity follows however, since $ \Omega^{\uparrow, \rm anc} \Phi^{n,\phi\varrho^1_t,(\varrho^2_t(\wh \chi))^\ast} $ is a {\em mixture} of {\em truncated polynomials}.
Namely in detail we show:\\
{\bf {\em (i)}} we have again that truncated polynomials are additive and\\
{\bf {\em (ii)}} we can wright out the second term explicitly to see it is a truncated polynomial.

This has been proved in \cite{infdiv} in Theorem~\ref{Inf-p.trunc.poly} for the case without marks.
The point is that a sample only contributes if all sampled points are in the same open $ 2h- $ball, which \textit{still holds with marks} since the monomial is defined by $ \phi \cdot \chi $.

\subsection{Basic properties of the martingale problem: Proof Proposition~\ref{prop.1426}, spatial version of Proposition~\ref{p.dual.ext} (FK-duality)}\label{ss.2592}

Crucial for our criterion is the characterization of the process via an  \textit{operator specifying a well-posed martingale problem}.
One issue which we treat first is the \emph{uniqueness} for the solution of the martingale problem using duality, then we discuss the \emph{existence} of a solution and prove Proposition~\ref{prop.1426}, which in particular requires proving the spatial version of Proposition~\ref{p.dual.ext} establishing the Feynman-Kac duality.

For the required \emph{uniqueness} we have to start as first point by introducing the \emph{dual process} and proving a \emph{duality relation} in the spatial and path marked context, which requires to pass to a spatial coalescent on $ E $ and to augment it for the path valued case by a vector describing the paths of the sampled individuals taken up to the present backward time.

The second point we have to deal with the question of the \emph{existence} of a solution for the martingale problem which is well known (see \cite{ggr_tvF14}) except for the \emph{path} marked case which we therefore discuss in more detail below.
Existence follows via \emph{approximation} by individual based models as we point out below however it is {\em not} standard for marks from $ D(\R,E) $ and that is why we give more details for this point.

{\bf{\em Step 1: Uniqueness and FK-duality}} We shall now argue that we have again a duality relation and the {\em analog of Proposition~\ref{p.dual.ext}} holds in the marked case.
A first important observation is that since migration and branching occur independently and the generator consists therefore of a sum of three operators growth and branching we had before already and each allowing a duality and in addition the {\em evolution of the mark} for  which we will establish duality below. 
If we now take as test functions polynomials which are based on test functions on $ \D_n $ and $ V^n $ in {\em product form} and we get nice expressions for the generator such that for each of these operators we can establish duality.
For that reason we get again a {\em Feynman-Kac duality relation} for this spatial model.
Here are the details.

We give now first explicitly the \emph{dual process}, then the {\em duality function} for first the $ E- $ marked and then the path-valued case and finally verify that the \emph{duality} holds.
All is based on the \emph{generator criterion} \textit{for duality} again.
Recall the notation and setup for duality in Section~\ref{ss.pop}, where the duality is derived for the case without marks.
We distinguish two cases $V=E$ and $V=D$ or $D^\ast$.

{\em (i) Case $ V=E $.} 

Begin with the {\em dual process} which is based on an \emph{$E$-marked partition valued} process enriched by a \emph{distance matrix}. 
Here the partition elements are each marked by an element of $E$.

We specify the \emph{initial state} as follows.
Start the dual process with $ m $ individuals.
The initial distance matrix between these individuals is $ \underline{\underline{r^\prime}}=\underline{\underline{0}} $.
Furthermore by fixing a set of {\em $ m $ initial locations}, which might be assigned in multiplicity, $ \underline{\xi}=(\xi^1, \cdots, \xi^m) \in \Omega^m $ we determine a function $ \chi $ for the location-marked case, as in (\ref{e1178})-(\ref{e1264}).
Furthermore choose $\phi \in C_b(\D_n,\R)$.

Then the dynamic is as follows, partition elements \emph{migrate} with the kernel $ a $ from \eqref{e1321} and \emph{coalesce} if they share a site.
The elements of the distance matrix grow at rate $2$ as long as the two respective elements are in \textit{different} partition elements.
This specifies the dual process
\begin{equation}\label{e2610}
(\mathfrak{C}_t)_{t \geq 0} \; , \; \mathfrak{C}_t=(p_t,r_t^\prime, \underline{\xi}_t)$ for $ V=E.
\end{equation}

The \emph{duality function} is given as
\begin{align}\label{a2516}
H^{n,\phi,\chi}(\mfu,(p,\uur^\prime,\xi))= \intl_{(U \times V)^n} \phi(\uur^p+\uur^\prime) \chi (\uxi) \mu^{\otimes n} (d(\underline{u,v})), \\
\mbox{  where  } \uur((\underline{u,v}))=(r(u_i,u_j)_{1 \leq i < j \leq n}) \mbox{ and } \uxi(\underline{u,v})=\underline{v}. \nonumber
\end{align}

Finally we need the {\em Feynman-Kac potential} suitably modified in the spatial case.
Here we count the occupation time of partition elements {\em at the same site}.
Denote by $ \# p_{s,\xi} $ the number of partition elements at time $ s $ in $ \xi $.
Then the {\em Feynman-Kac potential} on the time interval $ [s,t] $ is 
\begin{equation}\label{e2156}
\beta_{s,t}= \intl_s^t \; \suml_{\xi \in E} ((^{\# p_{u,\xi}}_2 ) ) du. 
\end{equation}

Then we have the following \emph{FK-duality relation}:
\begin{align}\label{a2522}
E_{\mfu_0} \left[ H^{n,\phi,\chi} \left(\mfU_t, (p,\uur^\prime, \uxi) \right) \right] = 
E_{\mfk_0} \left[ H^{n,\phi,\chi} \left( u_0,(p_t,r^\prime_t,\uxi_t) \right) \exp (\beta_{0,t}) \right]
\end{align}

To \emph{determine the f.d.d. via the dual}, we get an expression based on the \emph{time-space dual} giving the f.d.d. formula of the $ E$-marked process based on the Markov property following \cite{FG94},\cite{GSW} page 13 which we formulate next.

We consider now the \emph{time-space process} of $ \U^E$-valued super random walk and consider the so called \emph{time-space coalescent} on $ E $ to derive the following duality formula determining the f.d.d. of the original process.

Consider time points $ 0 \leq t_1 < t_2 < \cdots < t_\ell=t \quad, \ell \in \N $.
Then consider the following functional of the forward process:
\begin{equation}\label{e2438}
\wt \Phi \left((\mfU_t)_{t \geq 0} \right) = \prod^\ell_{k=1} \; \Phi_k (\mfU_{t_k})
\end{equation}
where $ \Phi_k \in \prod \quad, k=1,\cdots,\ell $.

Next consider the spatial coalescent with \textit{frozen} partition elements, $ \left( \mfK_s \right)_{s \in [0,t]} $.
Here we start with partition elements in the time-space points $ \left( (t_k,i_k^j)_{j=1,\cdots, \ell_k} \right)_{k=1,\cdots, \ell} $ and the partition elements in $ i^j_k,j=1,\cdots, \ell_k $  will be \emph{frozen} for the dual evolution till times $ t-t_k $, for $ k=1,\cdots,\ell $.
(Precisely: the unfrozen, called \emph{active} partition elements, evolve as before as spatial coalescent.
Similarly the Feynman-Kac potential at time $ s $ of the backward evolution does include only the \emph{active}  partition elements at that time, i.e. $\# p_{s,\xi}$ is the number of \emph{active partition elements}.
Similarly the distances between frozen particles is zero and between a frozen and active one is initially zero and grow at speed $1$ till they are both active and the growth is $2$.)

Define now
\begin{align}\label{2450}
\phi(\underline{\underline{r}}) & = \prod^\ell_{k=1} \; \phi_k \left((r_{i,j})_{\ell_1 + \cdots + \ell_{k-1}<i<j\leq \ell_1 + \cdots + \ell_k} \right), & \\
\chi \left(\underline{t},\underline{v}^\pi \right) & = \prod^\ell_{k=1} \; g_k(t_k,\underline{v}^{\pi_k}),& \nonumber
\end{align}
which characterizes the polynomial $ \wt \Phi $ in \eqref{e2438}.
We see that $\wt \Phi(\mfU_t)=H(\mfU_t,\mfK_0)$ with $H$ based on $K_0$ and $\wt \Phi$ as before for the classical objects.

Now following the argument for the Fleming-Viot case in \cite{FG94}, \cite{GSW} we obtain the formula relating the  \emph{time-space forward} and the \emph{time-space dual} process namely we get with expectation on the l.h.s. with respect to $\mfU$ on the rhs. will respect to $\mfK$:

{\em Duality formula for f.d.d.}
\begin{equation}\label{e2463}
E \left[\wt \Phi \left((\mfU_t)_{t \in [0,T]} \right) \right]
=E \left[H\left((\mfU_s)_{s \leq T}, \mfK_0 \right)\right]
=E \left[H(\mfU_0,\mfK_T)\exp(\beta_{0,T})\right]. 
\end{equation}
Since we have here a FK-duality we have to argue here that the Feynman-Kac term behaves as claimed.
However the argument works with the Markov property and applying the duality relation to the time pieces between the $(t_k)_{k=1,\cdots,\ell}$ so that this can be imitated with the FK-duality the result follows from the fact that the Feynman-Kac term is an additive functional.

{\em (ii) Case $ V=D ([0,\infty),E) $ and $V=D^\ast$}.\\
For the path-valued case we have the {\em path marked spatial} respectively \emph{time-space coalescent} which is the following modification of the above. 
Only the \emph{mark and their evolution is different} compared to the dual process in case (i) above.
We let $ t $ be the time horizon for the duality relation.
This means the original process started at time $ s < t $ will evolve till time $ t $.
The dual process will evolve from time $ t $ backwards till time $ s $, its time of evolution runs therefore for time $ t-s $.

We focus on the time-space coalescent including the "simple" one. 
We enrich the coalescent analog to the $V=E$ case.
In addition to the \emph{distance matrix} $ \underline{\underline{r^\prime}} $ we record now the {\em vector of the paths of locations} $ \underline{\xi} = \Big((\xi^1(u))_{u \in \R}, \cdots, (\xi^m(u))_{u \in \R}\Big) $ of all initial individuals recall here the description around \eqref{e2438}, with $ m= \suml_1^\ell  \ell_k$, which also enters the duality relation the same way as in the forward evolution.
The coalescent is denoted $\wt \mfC$ and the time-space coalescent is denoted $ (\wt \mfK_s)_{s \geq 0} $.

Note that here we keep the path of descent of every of the initial individuals, even though they may be piecewise joint path beyond some backward time.
Note that the dual path evolves backwards from $ t $ to $ s $, rather than forward from $ s $ to $ t $.
On the other hand the input in the duality function is, as we shall see, the same over the full interval $ [s,t] $.

Next we come to the {\em duality function}.
To write down the duality function we need the sampling measure restricted to the population at a site, called $ \mu_i $ and given by 
\begin{equation}\label{e2129}
\mu_i(A)=\nu(A \times \{i\}), A \in \mcB(U).
\end{equation}

Now we can define for $ t>s \geq 0 $ the time-space {\em duality function} for that path marked case for the situation where we are starting in $s$ in constant path and distance matrix $r$.

For the duality function we chose again a {\em number of individuals $ m $} in the dual process, the vector $ \underline{\xi} $ of their initial path determined by an $ m$-tuple of locations $ \underline{\xi} $, a function $ \Psi \in C_b(\R^2, \R) $ with $ \Psi(t,s)= \wt \Psi(t-s)$ with $\wt \Psi \in C_b(\R,\R), \wh \chi \in \CB(\R \times D(\R,E^m), \R) $ and then {\em a function} $ \phi \in C_b(\D_n,\R) $.

Then set for $ \mfu \in \U $ and $ s,t \in \R $ with $s<t$ and $\wt \mfC_t=(p,\uur^\prime,\uxi^\prime,t)$:
\begin{align}\label{e1266}
 H^{\phi, \psi, \underline{\xi}, \wh \chi} \; \Big((\mfu,s), (p,\underline{\underline{r}}^\prime, \underline{\xi}, t)\Big) &= &\\
 \psi (s,t) \intl_{(U \times V)^n} d \left(\left( \bigotimes\limits_{i=1}^n \mu_{\xi_i(s)}\right) (d \underline{x})\right) \phi \Big[\left(r^p(x_i,x_j)\right)_{1<i<j \leq n} + & \left(r^\prime(i,j) \right)_{1 \leq i < j \leq n}\Big] \wh \chi \left(s,(\xi(u))_{s \leq  u \leq t}\right).  & \nonumber
\end{align}

This amount to having a $\chi$-function with \emph{two}  factors one as in \eqref{a2516} generating the locations where we sample with the sampling measures the individuals from the population and a second factor to explore the corresponding path at different time points. 

Then the {\em Feynman-Kac duality relation} reads (with $ s = 0 $):
\begin{lemma}[FK-duality:path process]\label{l.2401}
We start the process $\mfU$ in a state with constant path. Then:
\begin{eqnarray}\label{e2166}
E [H^{\phi,\psi, \underline{\xi},\wh \chi} ((\mfU_t,t),(p_0,r^\prime_0, \underline{\xi}_0,0))] 
= E[H^{\phi,\psi, \underline{\xi},\wh \chi  } ((\mfU_0,0),(p_t,r^\prime_t,\underline{\xi}_t,t)) \exp(\beta_{0,t})]. 
\end{eqnarray} \qed
\end{lemma}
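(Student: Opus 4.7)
The plan is to verify the \emph{generator criterion for Feynman--Kac duality}, i.e.\ equation (4.4.41) in \cite{EK86}, and then lift it to the duality identity \eqref{e2166} along the standard lines (e.g.\ Corollary~4.4.13 in \cite{EK86}). Concretely, I would check that for $H = H^{\phi,\psi,\underline{\xi},\wh\chi}$ viewed as a function of $((\mfu,s),(p,\uur',\uxi,t))$,
\begin{equation*}
 (\Omega^{\ast,\mathrm{anc}} + \partial_s)\,H \;=\; (L^{\downarrow} + \partial_t^{\mathrm{back}})\,H \;+\; V\,H,
\end{equation*}
where $L^\downarrow$ is the generator of the path--marked time-space coalescent $(\wt\mfK_t)$ (growth of $\uur'$, coalescence at a shared site, migration of partition elements, and backward evolution of the recorded ancestral paths $\uxi$) and $V(p,\uxi) = \sum_{\xi\in E}\binom{\#p_\xi}{2}$ is the Feynman--Kac potential already used in \eqref{e2156}.

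The forward generator splits as $\Omega^{\uparrow,\mathrm{grow}} + \sum_\xi \Omega^{\uparrow,\mathrm{bran}}_\xi + \Omega^{\uparrow,\mathrm{anc}}$, and because the duality function factors as an integral of a distance factor $\phi(\uur^p+\uur')$ against a mark factor $\wh\chi(s,(\xi(u))_{s\leq u\leq t})$ with a product $\bigotimes \mu_{\xi_i(s)}$ structure, the four pieces of the computation decouple. First, the growth piece is treated exactly as in Lemma~\ref{l.tr1}: $\Omega^{\uparrow,\mathrm{grow}}$ differentiates $\phi$ w.r.t.\ the $r_{ij}$'s, and this matches distance growth at rate $2$ between different partition elements in $L^\downarrow$. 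Second, for each site $\xi$ the term $\Omega^{\uparrow,\mathrm{bran}}_\xi$ produces the factor $b/\bar\mfu_\xi$ times a sum over $\theta_{k,l}$-replacements which, once one performs the sampling integral using the identity $\int \mu_\xi^{\otimes 2}(\dx x_1 \dx x_2) F(x_1,x_2) = \bar\mu_\xi \int \mu_\xi(\dx x) F(x,x)$, yields the coalescence rate $\binom{\#p_\xi}{2}$ together with the cancellation of the $\bar\mfu_\xi^{-1}$ singularity (this is precisely the mechanism used in Lemma~\ref{l.tr1}, extended site by site). Third, the migration part of $\Omega^{\uparrow,\mathrm{anc}}$ acting on the kernel $\mu_{\xi_i(s)}$ matches the migration of the partition elements in the dual. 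Fourth, the part of $\Omega^{\uparrow,\mathrm{anc}}$ acting on the path-evaluation function $\wh\chi$ at times in $[s,t]$ gets absorbed on the left-hand side by $\partial_s$ and on the right-hand side by the backward path evolution of $\uxi$ in $\wt\mfK$; here one uses that $\wh\chi(s,(\xi(u))_{s\leq u\leq t})$ has been chosen to depend on $(s,t)$ only through the window $[s,t]$ so that the combined action is a boundary term which matches the backward time derivative.

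Once the generator identity is established, the duality relation \eqref{e2166} follows from the usual argument: define
\begin{equation*}
 F(r) \;=\; \E\!\left[H\bigl((\mfU_{t-r},t-r),(\wt\mfC_r)\bigr)\exp\!\left(\int_0^r V(\wt\mfC_u)\,\dx u\right)\right], \qquad r\in[0,t],
\end{equation*}
differentiate in $r$, apply the generator identity, and conclude $F(t)=F(0)$, which is \eqref{e2166} since the initial state of $\mfU$ has constant paths (so the $s$-dependence in $\wh\chi$ at $s=0$ is trivial on the forward side). Integrability of the Feynman--Kac term and of the moments of $\bar\mfu$ that appear after integrating out the sampling measures is the same standard super-random-walk estimate used after Lemma~\ref{L.EX}; for infinite $E$ one restricts to Liggett--Spitzer initial conditions as in Section~\ref{ss.7.1} and uses that $(\#p_t)_{t\geq0}$ is non-increasing so that $V$ is bounded by $\binom{m}{2}$.

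The main obstacle I anticipate is in the third step above, i.e.\ matching the action of $\Omega^{\uparrow,\mathrm{anc}}$ on the path-valued function $\wh\chi(s,(\xi(u))_{s\leq u\leq t})$ with the backward path dynamics of $\uxi$ in the dual. Because $\wh\chi$ typically evaluates each coordinate path at a \emph{fixed} tuple of time points, one has to be careful about the pieces of path that lie before the current forward time $s$ versus those in the interval $[s,t]$, and about what happens at the boundary $u=s$ where the forward process creates a new value. The cleanest way is to smooth $\wh\chi$ by the time averages as in Remarks~\ref{r.1244} and~\ref{r.1364}, check the generator identity on this $C^1$ class (where $\partial_s$ and the path process generator $\wh A$ in \eqref{e1326} combine into a total derivative that matches the backward drift), and then remove the smoothing by the approximation argument of Lemma~\ref{l.tr2.marked}. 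All the remaining inputs---additivity of the relevant operators, Markov/strong Markov properties of the coalescent, and stochastic continuity of $\mfU$---are already available from Section~\ref{ss.7.1}.
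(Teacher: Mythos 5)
Your proposal follows essentially the same route as the paper: verify the generator criterion for Feynman--Kac duality, match the growth and branching operators by lifting the non-spatial computation of Lemma~\ref{l.tr1} (with the coalescence rate and the cancellation of $\bar\mfu_\xi^{-1}$ producing the potential $\sum_\xi\binom{\#p_\xi}{2}$), and isolate the mark/path evolution as the genuinely new ingredient. The only cosmetic difference is in that last step: where you absorb the path part into a boundary term matched against the backward time derivative (after smoothing $\wh\chi$), the paper observes directly that the forward mass flow on paths and the dual path process both act on $\wh\chi$ through the same one-individual operator $A^\ast_k$ (with $\bar a(v,v')=a(v',v)$), so the two sides coincide termwise via $\wh\chi^{\ast,k}=\wh\chi^{\ast,\downarrow}_k$.
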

However in order to obtain above duality later via the generator relation some additional concepts are needed, due to the time \emph{in}homogeneity.
These operators corresponding to the processes above involve the explicit time coordinate and therefore we will need a generator relation for all times $s$ between $0$ and the time horizon $t$ and hence we have as state of the forward process path which already evolved for some time $u$ and are not in the constant state, which we assumed writing down $H(\cdot,\cdot)$ in a specific way.
In fact we have to be more careful writing down the duality function, so that we can use it for the intermediate times $s$ between $0$ and the time horizon $t$.
In fact we just saw that we have to generalize this a bit now.

Consider the following objects.
For two path $\xi^\uparrow$ and $\xi^\downarrow$ one from $D_{s,u}$ one from $D_{u,t}$ we introduce, for those ones with $\xi^\uparrow(u) = \xi^\downarrow (u)$ the \emph{glued path} $\xi ^{\ast,u}=\xi ^\uparrow \vdash \xi^\downarrow$ from $D_{s,t}$ arising by setting
\begin{equation}\label{e2696}
\xi^{\ast,u} (r) = \left\{ \begin{array}{lcr} 
\xi^\uparrow & \mbox{  for  } & r \leq u \\
\xi^\downarrow & \mbox{  for  } & r > u.
\end{array}  \right.
\end{equation}

This is used to generalize the duality as follows. Now the duality function between time $s$ and $t$ given by replacing the path $\xi$ in the formula \eqref{e1266} by $\xi^{\ast,s}$ from above.

We need above relation also for the \emph{$ t-$truncated process}, including the smoothed versions.
For this purpose the duality function has to be changed by replacing in \eqref{e1266} the function $ \phi $ and $ \wh \chi $ as follows.

For the truncated case we replace $ \phi $ by the truncated $ \phi $ which is $ \phi 1_{[0,t]^n} $ respectively its smooth version.
The function $ \wh \chi $ is replaced by the function of the truncated path respectively the smoothly truncated one.
We have to see to it below that the duality still holds on these truncated test functions by approximation (spatial version of Lemma~\ref{l.tr2} see \eqref{e1318.marked}).

\begin{proof}[Proof of Lemma~\ref{l.2401}]\label{pr.2534}
We have established the needed generator relation for the FK-duality for the non-spatial and time-homogeneous case. 
We note that passing to the time-space process is fine since $ \frac{\partial}{\partial t} $ and $ - \frac{\partial}{\partial t} $ are \textit{operators in duality} with our condition on $ \psi $.

Consider next the $E$-marked case. 
Then the argument for growth and branching operator is easily lifted from $U$ to $U \times V$.
Therefore it remains to verify the duality criterion now only for the \emph{migration operator}.
Hence we have to show that the generator of the mark evolution is in duality with the one for the dual process, the spatial coalescent enriched with the distance matrix and the mark vector which is a vector of locations. 
But this is in the literature.
For details for the mark part of the operator note that this is the same as in the case of the Fleming-Viot process, see therefore Remark 1.19 in \cite{GSW}.

Similarly in order to now establish the Feynman-Kac duality for the \emph{path marked} case we have to show that the mass flow on path is dual to the migration in the path in the $a(\cdot,\cdot)$-spatial coalescent.
Namely in order to check the duality in that case one uses the generator criterion and as test functions polynomials of the form as given in \eqref{e1266} with the mentioned restriction.

Here one might wonder whether this is not just the duality of the time-space processes from above.
That is unfortunately a bit more tricky as we explain next.

\begin{remark}[Time-space process for $E$-marked versus $D$-marked process]\label{r.2630}
If we observe the state at time $ t $ of the \emph{path valued}  process $\mfU$ and evaluate the specified polynomial on the one hand and compare it with  the functional of the path of the \emph{$ E$-marked} process with the specified test-functions are similar if in the latter we consider the time-space process but there is the following difference.

We must observe that in the \emph{$ E$-marked time-space process} we sample from the population at times $ t_1,\ldots,t_m,t $ and observe the position at this time say $ t_k $ only, while for the path marked case such positions appear also for individuals sampled at later time  $ t_i $. 
In particular in the path marked case we sample from the population at time $ t_i $, so that we prune individuals at those time $ t_k $ which do not have descendants at time $ t_i $.
Note however that the pruning is independent of path and genealogy \emph{up to time $ t_i $}.
This means that the ones sampled at a time $ t_i $ but whose path is evaluated at positions $ t_k < t_i $ must in the time-space coalescent in the backward picture be activated at time $ t_i $ and not $ t_k $ that is in the backward time at $ t-t_i $ and not $ t-t_k $.
Similarly the Feynman-Kac term changes. \end{remark}
\end{proof}

Next we have to argue how to get the duality relation respectively the analogue of \eqref{e2463} for the \emph{path-marked} process, first in the time-inhomogeneous setting in (i) then for $D^\ast$ in (ii).

\paragraph*{(i)}
Denote coalescent and functional by $\wt \mfK $ respectively $ \wt \beta_{0,t} $.
We have to address two points (1) How is duality transfered from the mass flow of locations to the mass flow of the {\em path process} and (2) how this then is transfered to the {\em measure-valued historical process} i.e. the process of measures on these path and even further from here to $ \U^V$-valued processes. 
How to prove the claim?

To get the duality we have to prove essentially first ((1) and the first part of (2)) that the mass flow induced by the historical process and a system of independent random walk \emph{path} processes are dual w.r.t. $H(\cdot,\cdot)$ from \eqref{e1266} using the generator criterion.
Some care is needed here due to the time inhomogeneity as we will see below.

To verify the generator criterion for duality, we note that the generator action of $\Omega^{\uparrow,\rm anc}_s$ (recall this is exclusive the action in the explicit time coordinate) is on the polynomial via the function $\chi$.
Here we have to recall the operators defined in \eqref{e1326} and \eqref{e.1157}.
Namely for a polynomial of degree $n,\Phi^{n,\phi,\wh \chi}$ we have:
\begin{equation}\label{e2611}
\Omega^{\uparrow,\rm anc}_s \Phi^{n,\phi,\wh \chi} = \suml^n_{k=1}\Phi^{n,\phi,\wh \chi^{\ast,k}},
\end{equation}
where (recall here \eqref{e.1157})
\begin{equation}\label{e2615}
\wh \chi^{\ast,k} := A^\ast_k \; \wh \chi.
\end{equation}
Note here that the rhs. involves $s$.
This expression we have to compare with the action of the \emph{dual process generator} on a polynomial. 

We now need the action of the mark evolution of the dual process on the duality function for fixed first argument and how this acts on the test function.
To see this we have to calculate the action of the generator of the  random walk path process of one moving individual on $H^{n,\phi,\chi}(\mfu;\cdot)$.
This latter process is a pure jump process and the generator action we look for is the \emph{sum} of the $n$ one--individual generators which is given in equation \eqref{e1326}.
We have to argue now what this formula implies if we apply it to the function given via the duality function above.
For that purpose we have to view this function as a functional of the function $\wh \chi$ of the path on which we can act directly with the generator.
We claim this object we can write again as a polynomial with a \emph{new} $\wh \chi$-function.

To see this we need to analyze what type of function $H(\mfu,\cdot)$ actually is.
This function is an integral over parameters, namely $u \in U$ which is derived from $\mfu$, of polynomials of the form (recall \eqref{e1335}):
\begin{equation}\label{e2675}
\intl_{V^n} \wh \chi \; d \nu_\ast^{\otimes n}.
\end{equation}

In particular does it suffice to have the self-duality in terms of the operation acting on $\wh \chi$, on the function of one path.
Here the dual random walk path goes in the revers direction as the underlying motion hence we have the same $a(\cdot,\cdot)$ as in the mass flow term in the operator and we have here therefore the same coefficients for the action on $\wh \chi$.

Hence the dual migration operator acts only on the function $\wh \chi$ and through it on the polynomial $\Phi^{n, \phi,\wh \chi}$ and the operator $\Omega^{\downarrow,\rm anc}$ does this acting on the polynomial via
\begin{equation}\label{e2621}
\Omega^{\downarrow,\rm anc} \Phi^{n,\phi,\wh \chi} = \suml^n_{k=1} \Phi^{n,\phi,\wh \chi^{\ast,\downarrow}_k},
\end{equation}
where
\begin{equation}\label{e2625}
\wh \chi^{\ast, \downarrow}_k:= A^\ast_k \; \wh \chi.
\end{equation}
We see that $\wh \chi^\ast_k = \wh \chi^{\ast, \downarrow}_k$ which proves the duality of the mass flow.

This proves the Feynman-Kac duality relation for the path valued process.
In particular we get also a formula for the finite dimensional distributions i.e. using instead of the mechanism and the functional $ \beta_{0,t}(\cdot) $ the one of the dual process $ \wt \mfK $ and $ \wt \beta_{0,t} $ as specified above then the analog of \eqref{e2463} holds replacing $ \mfK,\beta $ by $ \wt \mfK,\wt \beta $ in the formulas .q.e.d.

Therefore the uniqueness result for the martingale problem carries over to the $ D^+$-marked case.

\paragraph*{(ii)}
In the final step we have to {\em lift} this FK-duality now to the marks given by the \emph{adjusted} path, shifted by $ t $ to the left for the time $ t $ duality i.e. we have to pass to $ D^\ast$-marks.

Here we apply the shift by $ t $ to the path of the dual process (which runs backward and is shifted by $t$ further)  
i.e. in the duality relation we have to take for a dual path  $ \zeta $ running backwards from time $t$ the adjusted path (with a fixed choice of $ t $):
\begin{equation}\label{e2181}
\zeta (\cdot) \longrightarrow \zeta ( \cdot + t) := (\zeta)^{\ast, \downarrow} (\cdot)
\end{equation}
in the duality relation.
We need that this does not change the two sides of the duality relation for the \emph{chosen} function.
This is true since we shift by $ t $ the path on both sides.

Altogether we have now verified the assumption needed to get the uniqueness of the martingale problem, which is needed in the verification of our criterion for the branching property.

{\bf {\em Step 2: Existence}} Next we need {\em existence} of a solution.
\label{ss1791}In order to prove the existence of a solution to the martingale problems we proceed in principle in two parts.
One considers in the first part a solution to the martingale problem on a {\em finite} geographic space with the "induced" dynamic from the larger space and obtains then in the second part a solution for the possibly infinite space by approximation via a sequence of processes constructed for embedded \emph{finite}  geographic spaces namely for  $ E_n \uparrow E, \mbox{  with  } \mid E_n \mid <\infty $.
Here we use a suitably chosen \emph{modified migration part} of the dynamic.
This approximation argument is standard for measure-valued processes and carries over to the $ \U^V$-valued setup making use of the duality for the limit evolution for the latter, see \cite{ggr_tvF14}, where all details are spelled out. 
Hence it remains to show here the existence of a solution on {\em finite} geographic space.

Here, on the finite geographic space (and \emph{finite total population size}), one uses the \emph{approximation} with an {\em individual based model}. 
One takes per site a finite number of individuals and then lets this  number tend to infinity, see \cite{GPW09}, \cite{GSW} where this is carried out for the Fleming-Viot model and \cite{Gl12} where the branching model is treated in the non-spatial case, the spatial case in \cite{ggr_tvF14}. 
We refrain from giving details in this paper {\em except for the path-marked situation} where a \emph{new} point arises.

Namely we want to be able to approximate our process by a sequence of individual based branching models.
In order to then show that limits of the approximating models solve the martingale problem it is most convenient if we have {\em continuous} test functions.
This is here a problem with {\em path-marked} genealogies, the spatial component in the polynomials is in that model {\em not continuous} 
for these polynomials we use as test functions in the martingale problem, namely in the path-valued case our polynomials are based on evaluations in \emph{fixed time points} and hence are {\em not continuous} in the mark variable in the Skorohod space of path.

We can obtain continuous test functions by considering moving averages of the functions $ \chi $ which we are using, more precisely choosing the time points there at random on $ \R $, recall~\ref{r.1244}.
(This we also use for the topology to define convergence.)
Because of the linearity of the operator $ \wh A $ we can then obtain $ \Omega^{\uparrow, \rm anc} $ for these test functions as well, but now the test functions are continuous.
Furthermore solutions to the martingale problem are also solutions to the one with these test functions, but {\em not immediately the other way around}.
We must therefore directly work with the given martingale problem.

We can use two known properties here.
Namely for the convergence of the projection of the sampling measure on the marks, which yields the historical process, we can use the existence and uniqueness results of Section 7 in \cite{DP91}. 
On the other hand projecting on $\U$ by projecting $U \times V$ on $ U $ gives for finite space the $\U$-valued Feller diffusion, where we can use \cite{ggr_tvF14},  \cite{Gl12}.
These two facts are given in detail below and can be used below to establish the convergence of the genealogies of the subpopulations descending from ancestors in some finite subset of geographic space and then it is well known to lift this to the full population as explained above.

The \emph{tightness} of the approximating individual based processes follows, since tightness of genealogies and marks separately implies the \emph{tightness} of the \emph{joint law}, Theorem 3 in \cite{DGP11}.
However we gave the reference for these two points above.
Hence we need to prove only \emph{uniqueness} of the \emph{limit points} to prove {\em convergence}.
We know our martingale problem has a unique solution by duality.
Hence we get convergence if we show that the limit points of our tight sequence \emph{must solve the martingale problem}.

The convergence to solutions of the martingale problems which are well-posed are known both for the projection on $ (\nu^\ast_t)_{t \geq 0} $ and for the projection on the genealogy.
Hence it only remains to deal with the {\em coupling} of the two components i.e. the {\em joint distribution} of genealogy {\em and} marks to conclude convergence as we saw above.

The process has test functions which are polynomials induced by a product of a function $\phi$ of the distances and $\wh \chi$ a function of the marks.
We have by results on the non-spatial case respectively the historical process (\cite{Gl12},\cite{DP91}) the convergence of the compensators to the limit compensator if we put either $\phi$ or $\wh \chi$ equal to a constant.
We also know that the operators associated with the genealogy and the mark evolution sum to the complete operator.
The branching operator acts on the function $\phi$ as well as the growth operator, the mark evolution operator on $\chi$ but the branching operator in the limit dynamic acts also on $\chi$ through the action of duplicating a path i.e. its action on the measure.
The latter is of course what connects the evolution of marks and of genealogies here in this model which is for the path marked case not just affecting mass at the site.
Still this gives rise to technical problems.

We will circumvent this problem and first prove \emph{convergence} to some limit \emph{without} using uniqueness of the martingale problem.
We will see from the proof that this limit point process is a Markov process.
We also use that we have tightness in the space $D([0,\infty],\U^V)$ so that it suffices to show f.d.d.-convergence, to have convergence in the path-space for the process.
Then we will have to show that this process solves the given martingale problem using the way it is \emph{constructed}.
Since we have the distribution of the genealogy part and the mark part converging we obtain the joint distribution by giving a construction of the \emph{conditional distribution} of the \emph{marks given the genealogy}.
This we can do by using the so called trunks of the state (recall Figure1) which we introduce below.

For a given $\mfU$ we denote by $\wt \mfU$ the projection from the marked genealogy on the genealogy, i.e. we consider the map $\tau: \U^V \to \U$ given by $[U \times V,r \otimes r_V,\nu] \rightarrow [U,r,\mu] \mbox{  with  } \mu(\cdot)=\nu(\{\cdot\} \times V) $ and set 
\begin{equation}\label{e3027}
\wt \mfU=\tau(\mfU).
\end{equation}
Then $\nu=\mu \otimes m$ with $m$ a kernel from $\U$ to $V$ describing the mark configuration given the genealogy, the latter means, we condition on the value of this functional $\tau$ to condition on the process of genealogies.
Hence the object we have to focus on is the $\U \times \CM_{\rm fin}(D(\R,E))$-\emph{transition kernel} $m$.

Fix for the state at time $t$ an $h \in [0,t)$.
We consider here the \emph{marked $(t-h)$-trunks} $\lceil \mfu \rceil (t-h)$  of the states $\mfu$ at time $t$ to show that the marginal distribution at times $t$ converge, by showing that the law of the \emph{marked} $(t-h)$-trunks \emph{conditioned} on the \emph{genealogy} which is a law on the mark distribution converges for all $h \in [0,t)$.
We recall next (resp. define here) $(t-h)$-trunks and \emph{marked} $(t-h)$-trunks.

The $(t-h)$-trunk of an element $\mfu$ of $\U$ is an element of $\U(t-h)^\sqcup$ (recall \eqref{eq:theta:a}) which complements the $\lfloor \mfu \rfloor (t-h)$-top of the $\mfu$, see \eqref{e1314} and as $h \uparrow \mfu$ the trunks converge to $\mfu$.
Here is the definition.

For $\mfu \in \U(t)^\sqcup$ \emph{consider} the $(t-h)$-top denoted $\lfloor \mfu \rfloor(t-h)$ and write $\lfloor \mfu \rfloor(t-h)={\mathop{\sqcup}\limits_{i \in I}}^{t-h} \; \mfu_i$, the decomposition into open $(t-h)$-balls $\mfu_i=[U_i,r_i,\mu_i]$ for $i \in I$.
Then the \emph{$(t-h)$-trunk} of $\mfu$ is defined:
\begin{equation}\label{e2892}
\lceil \mfu \rceil (t-h)=[I,r^\ast,\mu^\ast], \mbox{  with  } r^\ast (i,i^\prime)= \inf \left(r(u,v)-2(t-h) \mid u \in U_i,v \in U_{i^\prime} \right)
\end{equation}
and weights
\begin{equation}\label{e2896}
\mu^\ast (\{i\})=\mu_i(U_i).
\end{equation}

Next the \emph{marked} $(t-h)$-trunk. We decompose the $(t-h)$-top of the marked state $\mfu$ in the elements of $\U^D(t-h)$.
Now we proceed as above but the measures $\nu_i$ are now projected onto the set of path which are only non-constant in $[0,h]$, i.e. $v^h(s)=v(s) 1_{[0,h]} (s) + v(h) 1_{[h,\infty)} (s)$.
This gives us an element in $\U^D$.
Furthermore we have again that the marked $(t-h)$-trunk of $\mfu$ namely $\lceil \mfu \rceil (t-h)$ satisfies $\lceil \mfu \rceil (t-h) \to \mfu$ as $h \uparrow t$.

We begin by constructing the announced candidate for the limit dynamic.

\paragraph{Construction} 
We know the process for the genealogy exists (i.e. the state projected on $\U$ from $\U^V$), so we have to \emph{construct} the \emph{conditional law of $\mfU$ given the genealogy}.
We identify first the conditional law of the marks at time $t$ of the \emph{marked} $(t-h)$-trunk \emph{given} the $(t-h)$-trunk.
Note here that conditioning on the $(t-h)$-trunk is here equal to  conditioning on the genealogy up to time $t$ and then passing to the $(t-h)$-trunks, because of the Markov property and the independent increments of the path process.
For $h<t$ the $h$-trunk can be represented by an $\R$-tree with \emph{finitely} many leaves, so that the $(U^h_t,r^h_t)$ is a finite ultrametric space, where the leaves carry also $\R^+$-valued weights namely the mass of the time $t$ descendants (see \cite{infdiv}) so that we get an element of $\U$,denoted $[U^h_t,r^h_t,\mu^h_t]$.
The corresponding $\R$-tree is represented as a tree with finitely many vertices, binary split points and edges and leaves carrying a weight.
The $\R$-tree has a finite number of founding fathers, $F_1,F_2,\cdots,F_K$.
The set of leaves with the distances and weights determine an equivalence class of ultrametric measure spaces, $\lceil \wt \mfU \rceil (t-h)$.
We represent conditional distribution of the marks of the trunk, by constructing a version explicitly for every realization of $ \lceil \wt \mfU \rceil (t-h)$ more generally for every finite element in $\U(t-h)^\sqcup$.
For the $t-h$-trunk of a $\U$-valued Feller diffusion and via the backbone construction carried out in our framework, see \cite{ggr_tvF14} for more details on the diffusion case, while in the particle case one can use results of Chauvin, Rouault and Wakolbinger \cite{CRW1991}.

We now generate the \emph{marked} object as follows.
We have a collection of independent $\overline{a}$-random walks starting in $e \in E$:
\begin{equation}\label{e2873}
\left\lbrace \left(Y_t^{\{e\},k} \right)_{t \geq 0} , e \in E, k \in \N \right\rbrace.
\end{equation}
We consider the \emph{length of the edges} from the founding fathers to the first split point, say $T_1(F_1),\cdots,T_K(F_K)$.
Then we take the random walks
\begin{equation}\label{e2880}
\left(Y_t^{e_1,1} \right)_{t \in [0,T_1(F_1)]} \;, 
\left(Y_t^{e_2,2} \right)_{t \in [0,T_2(F_2)]}, \cdots,
\end{equation}
where $e_1,e_2$ are the positions of the founding fathers in $E$.
How to choose these points. 
They are the value of the mark attained for $t \leq 0$.

Then we continue with the split points replacing the founding fathers, where their position is given as the end-point of the corresponding random walks above.
We continue until we reach the level $h$.
Then we can associate with each path in the $\R$-tree from a founding father to a leave, parametrized via the length, a \emph{path} with values in $E$ generated by the random walk pieces.
Continuing beyond $h$ and before level $0$ as constant path, we have marked $\lceil \wt \mfU \rceil_t(h)$ with a path in $D(\R,E)$ and we can assign the weight which we get from the weight of the leaf.
This way we have constructed for a fixed trunk of a given $\mfu \in \U$ a random marked trunk, i.e. a random element in $\U(t-h)^\sqcup$.
This way we have constructed $m_h(\mfu,\cdot)$ a kernel from $\U \to \mcM_{\rm fin} (D(\R,E))$ based on the $(t-h)$-trunk of $\mfu$, $\mfu$ an element of $\U$.
We have now constructed a realization of the marked $(t-h)$-trunk of the element of $\U^D$ we look for.

From the $(t-h)$-trunks we obtain in the limit $h \uparrow t$, the $\U$-valued state at time $t$  and we want to define the marks, i.e. we need $m(\mfu,\cdot)$ as the limit $h \uparrow t$ of $m_h(\mfu;\cdot)$, which exists because of the concrete construction.

Suppose next we have a \textit{random variable} with values in $\U^D$.
Observe that above construction defines a random measure on path and its \emph{law} defines a transition kernel
\begin{equation}\label{e2917}
M_h(\mfu,dv) \mbox{  from  } \U(h)^\sqcup \mbox{  into  } \mcP \left(\mcM_{\rm fin} (D)\right).
\end{equation}
Hence we specified the law of the marked $(t-h)$-trunk of the stochastic process we look for and this works for every $h < t$ and the arising laws are by construction consistent, i.e. for $h'<h$ we have the law of the $h'$-truncation of the law for $h$.
As the limit $h \uparrow t$ is taken we obtain $M(\mfu,dv)$ from $\U$ into $\mcP(\mcM_{\rm fin} (D))$ the searched for conditional law of the $t$-marginal.

Precisely we have now the information on the marks which we have to insert in our formalism, namely suppose that we have an $\U$-valued process $(\mfU_t)_{t \geq 0}$, then we focus on the measure $\nu$ in $\mfU_t=[U_t,r_t \otimes r_V,\nu]$, we write as $\mu \otimes m$, where $m$ is a kernel from $\U$ to $(V,\mcB(V))$ which is read of from the state $(U,r,\mu)$ and our construction precisely as follows.

We need for given projection on the genealogy for the measure $\nu$ the conditional law given the first component, more precisely a regular version of the conditional distribution of $\nu^\ast$, which we called above $m(\mfu,\cdot)$, which is a finite measure on $(V,\mcB(V))$.
This measure is for us only relevant projected on the trunks that is we keep the path constant beyond $t-h$, equal to its value at this time.
This projection we read of from our construction by taking the realization of our random walks, leading from the founders to a leaf in the trunk and the weight prescribed there from the condition is their weight and gives $m_h(\mfu,\cdot)$, as an atomic measure on $D(\R,E)$ and as $h \uparrow t$ we obtain $m(\mfu,\cdot)$ as we saw above and applying this to $\mfU_t$ we get $m_t,M_t$.
Finally we set therefore for our derived object: $M_t(\mfu,\cdot)=\CL[m_t(\mfu,\cdot)]$ which gives us averaging over $\mfu$ with the law of the $\U$-valued process now the $\U^D$-valued one namely $\CL [\mfU_t]$.

We claim and prove further below that this way we have constructed a realization of the conditioned distribution of $\lceil \wt \mfU_t \rceil(h)$ given $\wt \mfU_t$, with $(\mfU_t)_{t \geq 0}$ being the limit process arising from the individual based models.

We need next the conditional law given the $\U$-projection of the \emph{finite dimensional distributions}.
We have for that purpose to carry out the construction for the states at times $0 \leq t_1 < t_2 < \cdots < t_n \leq t$ jointly. 
This means we consider the $n$-states and the trunks formed at heights  $h_1,h_2,\cdots,h_n$ of these states.
Here we have $h_k< t_k$ and we make the convention to consider the case where $h_k > t_{k-1}$, for all $k$.

We have now to construct the conditional distribution of the marks given the states in $\U$ in the $n$-time points.
We consider first the $n$ trunks specified by the $(h_i)_{i=1,\cdots,n}$ i.e. $(t-h_1)$-trunks.
We begin the construction as before for each time $t_1,\cdots, t_n$ by using random walk increments for the time intervals $[0,t_1],[t_1,t_2]$, etc by taking \emph{$n$ independent copies} of our random walk collection, the point then is to match the pieces of the path by choosing the starting points of the random walks consistently.
However we observe that the $(t-h_1)$-trunk of the state at time $t_1$ is as ultrametric space embedded in the $(t-h_2)$-trunk of the state at time $t_2$, there is only pruning since some of the leaves of the first will have no descendants at time $t_2$.
But therefore there is no consistency problem, constructing the random walk path.

Similarly we proceed with the rescaled approximating branching random walk population and construct a representation as the one we used to define the \emph{infinite} population per site process.

We are now in the situation that we have constructed the finite dimensional distributions of an \emph{$\U^{D^+}$-valued Markov process}, the Markov property being a consequence of the Markov property of the $\U$-valued process we use and the independent increments property of the random walks used in the construction of the paths which are independent of the chosen genealogy.
This \emph{process is the candidate for the limit process}  arising from the \emph{sequence of individual based approximations}. \qed 
\medskip

A consequence of the construction above is that in order to establish the convergence of the individual based approximations it will suffice to show convergence of the marginal distributions for fixed time $t$, if we can establish the \emph{Feller property} for the involved processes.

Now we turn to the issue of the \emph{convergence} of the approximations with individual based processes.
Now we claim that our conditional distributions converge weakly, i.e. $\CL [m_n(\mfu,\cdot)]$ converge to $\CL[m(\mfu,\cdot)]$.
For the approximating individual based models we have however  the same $m(\mfu,\cdot)$ and $M(\mfu,\cdot)$ by construction.
The map $\mfu \rightarrow M(\mfu,\cdot)$ is continuous.

We can use for the approximating system now the same random walk system and we pick the same increments and obtain this way a coupling by matching the starting positions cleverly.
We also note that we can choose the genealogies all on one probability space such that we have convergence in the sense of ultrametric measure spaces for the trunks on which we condition (by conditioning on the state at time of the genealogy process).
Therefore the approximation by individual based models converges to a limit law in the sense of convergence of the finite-dimensional marginals.
This limit is given by our \emph{construction} we have given based on the realization of the $\U$-valued process and the collection of random walks.

We have to show that this process we have constructed solves the martingale problem \emph{based on the construction} not its property as limit.
The latter we would know so far only if in our polynomial either $\phi$ or $\chi$ is constant (recall the results we quoted in the beginning).

We have to show that the branching operator $(\Omega^{\rm grow}+\Omega^{\uparrow,\rm bra})$ and the mark evolution operator $\Omega^{\uparrow,\rm mig}$ act on a polynomial $\Phi^{n,\phi,\wh \chi}$ by changing $\phi$ resp. $\wh \chi$ \emph{separately} by replacing $\phi$ by a suitable $\phi^\prime$ and $\wh \chi$ by a suitable $\wh \chi^\prime$ as given in \eqref{e1265}, \eqref{e1179}.
Since the construction uses the $\U$-valued process as given element the action of the branching operator which is only lifted to $U \times V$ from $U$ and the mark evolution is constructed based on the collection of independent random walks which evolve according to the path process dynamic replacing $\chi$ by $\chi^\ast$ as calculated earlier and  which merge based on the underlying genealogy giving then the lifted, from $V$ to $U \times V$, term.

Precisely we proceed as follows.
We define again for $\mfw \in \U^V,\mfw \sim (\mfu,\mfv)$ with $\mfu \in \U$ and $\mfv \in \mcM(V)$ by $\mfw=[U \times V,r \otimes r_V,\nu], \mfu=[U,r,\mu]$ and $\mfv=\nu(U \times \cdot)$.
We define for $\Omega^{\uparrow, \rm bran}$ the lifted version $(\wt \Omega^{\uparrow,\rm bran} \Phi^{n,\phi,\chi})(\mfu,\mfv)=(\Omega^{\uparrow,\rm bran} \Phi^{n,\phi,\chi} (\cdot,\mfv)) (\mfu,\mfv)$ and similarly for $\Omega^{\uparrow,\rm grow}$.
On the other hand we have an operator describing the evolution of the measures on path driven by the path process which is defined on $\Phi^{n,\wh \chi}$ to describe the evolution of the historical process and which we define on $\Phi^{n,\phi,\wh \chi}$ now, to describe the mark evolution.
To derive the expression we have to use the construction we gave using the collection of random walks, which shows that $\Omega^{\uparrow,\rm anc} (\Phi^{n,\phi,\wh \chi} (\mfu,\cdot))(\mfu,\mfv)= \Omega^{\rm hist} (\Phi^{n,\phi,\wh \chi} (\mfu,\cdot))(\mfu,\mfv) = \suml_{k=1}^u \Phi^{n,\phi,\wh \chi^{\ast,k}} (\mfu,\mfv)$, where $\wh \chi^\prime$ is defined in \eqref{e2611}.

Altogether we have now constructed a solution to the martingale problem which concludes the Step 2 on the \textit{existence} of the solution.

Both Step 1 and Step 2 together prove the \emph{wellposedness} of the martingale process as claimed in Proposition~\ref{prop.1426} q.e.d.

\bibliography{ggr_GenBran100}
\bibliographystyle{alpha}

\appendix

\end{document}